\colorlet{Changes@Color}{blue}
\definecolor{Green}{RGB}{9,100,40}
\definecolor{Red}{RGB}{150,0,24}
\newcommand\N{\mathbb{N}}
\newcommand\D{\mathbb{D}}
\newcommand\R{\mathbb{R}}
\newcommand\Ub{\mathbb{U}}
\newcommand\Pb{\mathbb{P}}
\newcommand\Eb{\mathbb{E}}
\newcommand\Hb{\mathbb{H}}
\newcommand\F{\mathscr{F}}
\newcommand\Gscr{\mathscr{G}}
\newcommand\n{\mathfrak{n}}
\newcommand\tb{t^{\bullet}}
\newcommand\zb{z^{\bullet}}
\newcommand\Gcal{\mathcal{G}}
\newcommand\Xcal{\mathcal{X}}
\newcommand\Mcal{\mathcal{M}}
\newcommand\Dcal{\mathcal{D}}
\newcommand\Rcal{\mathcal{R}}
\newcommand{\Pbb}[3]{P_{#1}^{#2\rightarrow #3}}
\newcommand{\Ebb}[3]{E_{#1}^{#2\rightarrow #3}}
\theoremstyle{plain}
\newtheorem{Thm}{Theorem}[section]
\newtheorem*{Thm*}{Theorem}
\newtheorem{Prop}[Thm]{Proposition}
\newtheorem*{Prop*}{Proposition}
\renewenvironment{proof}{{\bfseries Proof.}}{\qed}
\newtheorem{Def}[Thm]{Definition}
\newtheorem*{Def*}{Definition}
\newtheorem{Lem}[Thm]{Lemma}
\newtheorem*{Cor*}{Corollary}
\newtheorem{Cor}[Thm]{Corollary}
\theoremstyle{definition}
\newtheorem*{Ex*}{Example}
\newtheorem*{Rk}{Remark}
\begin{document}

\vglue30pt

\centerline{\large\bf  Growth-fragmentation process embedded in a planar Brownian excursion  }

\bigskip
\bigskip

 \centerline{by}

\medskip

 \centerline{Elie A\"{i}d\'ekon\footnote{\scriptsize LPSM, Sorbonne Universit\'e Paris VI, and Institut Universitaire de France, {\tt elie.aidekon@upmc.fr}} and William Da Silva\footnote{\scriptsize LPSM, Sorbonne Universit\'e Paris VI, {\tt william.da-silva@lpsm.paris}}}

\bigskip

\bigskip

{\leftskip=2truecm \rightskip=2truecm \baselineskip=15pt \small

\noindent{\slshape\bfseries Summary.}

\noindent The aim of this paper is to present a self-similar growth-fragmentation process linked to a Brownian excursion in the upper half-plane $\Hb$, obtained by cutting the excursion at horizontal levels.  We prove that the associated growth-fragmentation is related to one of the growth-fragmentation processes introduced by Bertoin, Budd, Curien and Kortchemski in \cite{BBCK}.

\medskip

\noindent{\slshape\bfseries Keywords.} Growth-fragmentation process, self-similar Markov process, planar Brownian motion, excursion theory. 

\medskip
 
\noindent{\slshape\bfseries 2010 Mathematics Subject
Classification.} 60D05.

} 

\bigskip
\bigskip

\section{Introduction}

We consider a Brownian excursion in the upper half-plane $\Hb$ from $0$ to a positive real number $z_0$. For $a>0$, if the excursion hits the set $\{z\in \mathbb{C}\,:\, \Im(z)=a\}$ of points with imaginary part $a$, it will make a countable number of excursions above it, that we denote by $(e_i^{a,+}, \, i \ge 1)$. For any such excursion, we let $\Delta e_i^{a,+}$ be the difference between the endpoint of the excursion and its starting point, which we will refer to as the {\it size} or {\it length} of the excursion. Since both points have the same imaginary part, the collection $(\Delta e_i^{a,+}, \, i\ge 1)$ is a collection of real numbers and we suppose that they are ranked in decreasing order of their magnitude.  Our main theorem describes the law of the process $(\Delta e_i^{a,+}, \, i \ge 1)_{a\ge 0}$ indexed by $a$ in terms of a self-similar growth-fragmentation. We refer to \cite{B} and \cite{BBCK} for background on growth-fragmentations. Let us describe the growth-fragmentation process involved in our case. \\

Let $Z=(Z_a)_{0\le a<\zeta}$ be the positive self-similar Markov process of index $1$ whose Lamperti representation is
\[
Z_a = z_0\exp(\xi(\tau(z_0^{-1}a))), 
\]
where $\xi$ is the L\'evy process with Laplace exponent
\begin{equation} \label{Lapl loc largest}
    \Psi(q) = -\frac4\pi q + \frac2\pi \int_{y>-\ln(2)} \left(\mathrm{e}^{q y}-1-q(\mathrm{e}^y-1)\right) \frac{\mathrm{e}^{-y}\mathrm{d}y}{(\mathrm{e}^y-1)^2}, \quad q<3,
\end{equation}
$\tau$ is the time change 
\[\tau(a) = \inf\left\{s\ge 0, \; \int_{0}^s \mathrm{e}^{\xi(u)}\mathrm{d}u > a\right\},\]
and $\zeta = \inf\{a\ge 0, \; Z_a=0\}$. The \emph{cell system} driven by $Z$ can be roughly constructed as follows. The size of the so-called \emph{Eve cell} is $z_0$ at time $0$ and evolves according to $Z$. Then, conditionally on $Z$, we start at times $a$ when a jump $\Delta Z_a = Z_a-Z_{a-}$ occurs independent processes  starting from $-\Delta Z_a$, distributed as $Z$ when $\Delta Z_a<0$ and as $-Z$ when $\Delta Z_a>0$. These processes represent the sizes of the daughters of the Eve particle. Then repeat the process for all the daughter cells:  at each jump time of the cell process, start an independent copy of the process $Z$ if the jump is negative, $-Z$ if the jump is positive, with initial value the negative of the corresponding jump. This defines the sizes of  the cells of the next generation and we proceed likewise. We then define, for $a\ge 0$, $\overline{\mathbf{X}}(a)$ as the collection of sizes of cells alive at time $a$, ranked in decreasing order of their magnitude.

Growth-fragmentation processes were introduced in \cite{B}. Beware that the growth-fragmentation process we just defined is not included in the framework of \cite{B} or \cite{BBCK} because we allow cells to be created at times corresponding to positive jumps, giving birth to cells with negative size. Therefore, the process $\overline{\mathbf{X}}$ is not a true growth-fragmentation process. The formal construction of the process $\overline{\mathbf{X}}$ is done in Section \ref{s:cellsystem}. The following theorem is the main result of the paper.

\begin{Thm} \label{thm:main}
The process $(\Delta e_i^{a,+}, \, i \ge 1)_{a\ge 0}$ is distributed as $\overline{\mathbf{X}}$.
\end{Thm}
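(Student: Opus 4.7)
My plan is to isolate a canonical ``Eve'' sub-excursion at each level $a$ whose signed size evolves as the self-similar Markov process $Z$, and then invoke the spatial Markov property of the planar Brownian excursion to recover the branching structure of the cell system $\overline{\mathbf{X}}$.

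First I would single out, at each level $a < \max_t \Im B_t$, the unique sub-excursion $e^{a,+}$ above level $a$ that contains the point of maximum imaginary part of the Brownian excursion; its signed size $X_a$ equals $z_0$ at $a=0$ and will play the role of the Eve cell. By Brownian scaling in $\Hb$, $(X_a)_{a \ge 0}$ is a self-similar process of index $1$, so by Lamperti's transformation it is encoded by some L\'evy process $\xi$. The heart of the argument is then to show that $\xi$ has Laplace exponent $\Psi$ as given in \eqref{Lapl loc largest}. I expect this to follow from It\^o's excursion theory applied to the vertical component of the Brownian excursion at the levels where the Eve sub-excursion splits, combined with the joint law of the horizontal entry and exit points of a planar Brownian excursion above a given level. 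The explicit form of the L\'evy measure $\tfrac{2}{\pi}\tfrac{\mathrm{e}^{-y}\mathrm{d}y}{(\mathrm{e}^y-1)^2}$ and of the drift $-4q/\pi$ should come from these two ingredients, together with the compensation needed to turn the jump-counting measure into a genuine L\'evy measure.

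Next I would verify the branching structure. At each jump time $a$ of $X$, the Eve sub-excursion $e^{a-,+}$ splits at level $a$ into several pieces above level $a$; exactly one of them, the piece still containing the global maximum of $\Im B$, remains the Eve cell, while the others are the newborn daughter cells. By the spatial Markov property of the Brownian excursion, each daughter is conditionally an independent planar Brownian excursion in a translated and possibly reflected upper half-plane, with signed initial size equal to $-\Delta X_a$---negative precisely when the daughter sub-excursion is traversed from right to left, matching the sign convention of the cell system. Applying the same Eve-cell identification recursively inside each daughter yields independent copies of $Z$ or $-Z$ according to the sign of the initial displacement, and by induction on generations we obtain the full cell system. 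Reassembling the cells alive at level $a$ and ranking them by magnitude then yields exactly $(\Delta e_i^{a,+})_{i \ge 1}$, which identifies the law of the process with $\overline{\mathbf{X}}$.

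The hardest step will be the explicit Laplace-exponent computation: one must translate the infinitesimal rate of daughter creation and the distribution of their initial signed sizes into the L\'evy measure $\tfrac{2}{\pi}\tfrac{\mathrm{e}^{-y}\mathrm{d}y}{(\mathrm{e}^y-1)^2}$ and the drift $-4q/\pi$ of \eqref{Lapl loc largest}. Two subsidiary difficulties are (i) handling signed sizes so that positive jumps of $X$ indeed correspond to daughters with negative initial size, and (ii) checking that singling out the Eve sub-excursion via the global maximum of $\Im B$ remains consistent under recursive splitting at all scales, so that the branching argument can be iterated without ambiguity.
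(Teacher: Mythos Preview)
Your overall architecture—isolate an Eve cell, show it evolves as $Z$, then use the branching property recursively—is the right one and matches the paper. The gap is your choice of Eve cell. The L\'evy measure in $\Psi$ is supported on $(-\ln 2,\infty)$; this restriction encodes precisely the \emph{locally largest} rule: at each splitting level the Eve piece is the one whose size (in absolute value) exceeds its sibling's, so the multiplicative jump $Z_a/Z_{a-}$ always lies in $(1/2,\infty)$. Your Eve cell—the sub-excursion containing the global maximum of $\Im B$—carries no such constraint: at a split, the piece containing the highest point may well be the smaller one, so your process $X_a$ will have jumps outside the support of the L\'evy measure of $\xi$ and cannot have Laplace exponent $\Psi$. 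The computation you flag as ``the hardest step'' would therefore not produce \eqref{Lapl loc largest}. A related problem is that your $X_a$ can change sign (nothing forces the sub-excursion containing the highest point to have its endpoint to the right of its starting point), so invoking Lamperti directly is not justified either; by contrast, the locally largest fragment started from $z_0>0$ stays positive throughout its lifetime precisely because of the jump-size constraint.

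The paper instead takes as Eve cell the locally largest fragment $\Xi$ (Proposition~\ref{locally largest prop}), shows via Bismut's decomposition and a time-reversal argument for the Cauchy process that $\Xi$ has the law of $Z$ (Theorem~\ref{thm:locally largest}), proves the offspring are conditionally independent with the correct laws $\gamma_{z_i}$ (Theorem~\ref{idp}), and finally checks by a connectedness argument that every excursion appears in the genealogy (Theorem~\ref{thm:growth-frag}). If you insist on your global-maximum Eve cell you would have to compute its (different) driving L\'evy process and then argue separately that the resulting cell system still yields the same law $\overline{\mathbf{X}}$—a non-trivial detour that the locally largest choice avoids entirely.
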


\bigskip

\noindent {\bf Remarks}. 
\begin{itemize}
\item The fact that there is no local explosion (in the sense that there is no compact of $\mathbb{R}\backslash\{0\}$ with infinitely many elements of $\overline{\mathbf{X}}$) can be seen as a consequence of the theorem. 

\item From the skew-product representation of planar Brownian motion, this theorem has an analog in the radial setting. It can be stated as follows. Take a Brownian excursion in the unit disc from boundary to boundary, with continuous determination of its argument (i.e., its winding number around the origin) $z_0>0$. Then, for each $a\ge 0$, record for each excursion made in the disc of radius ${\rm e}^{-a}$ the corresponding winding number. The collection of these winding numbers, ranked in decreasing order of their magnitude and indexed by $a$ is distributed as $\overline{\mathbf{X}}$.

\item One could finally look at the growth-fragmentation associated to the Brownian bubble measure in $\Hb$. It would give an infinite measure on the space of (signed) growth-fragmentation processes starting from $0$. In the non-critical case (i.e. when the natural martingale associated to the intrinsic area converges in $L^1$), a measure on growth-fragmentation processes starting from $0$ has been constructed by Bertoin, Curien and Kortchemski \cite{BCK}, see Section 4.3 there. 
\end{itemize}

\bigskip

\noindent {\bf Related works}. A pure fragmentation process was identified by Bertoin \cite{Ber5} in the case of the linear Brownian excursion where the {\it size} of an excursion was there its duration. Le Gall and Riera \cite{LG-Riera} identified a growth-fragmentation process  in the Brownian motion indexed by the Brownian tree. We will follow the strategy of this paper, making use of excursion theory to prove our theorem.

\bigskip

When killing in $\overline{\mathbf{X}}$ all cells with negative size (and their progeny), one recovers a genuine self-similar (positive) growth-fragmentation driven by $Z$, call it $\mathbf{X}$. The process $\mathbf{X}$ appears in the work of Bertoin et al. \cite{BBCK}, compare Proposition 5.2 in \cite{BBCK} with Proposition \ref{p:cumulant} below.  In Section 3.3 of \cite{BBCK}, the authors exhibit remarkable martingales associated to growth-fragmentation processes and describe the corresponding changes of measure. In the case of $\mathbf{X}$, the martingale consists in summing the sizes raised to the power $5/2$ of all cells alive at time $a$. Under the change of measure, the process $\mathbf{X}$ has a spinal decomposition: the size of the tagged particle is a Cauchy process conditioned on staying positive, while other cells behave normally. In the case of $\overline{\mathbf{X}}$, where we also include cells with negative size, a similar martingale appears, substituting $2$ for $5/2$, while the tagged particle will now follow a Cauchy process (with no conditioning). It is the content of Section \ref{s:martingale}. This martingale is related to the one appearing in \cite{AHS20}, where a change of measure was also specified. In that paper, the authors exhibit a martingale in the radial case, see Section 7.1 there. The martingale in our setting can be viewed as a limit case, where one conformally maps the unit disc to the upper half-plane, then sends the image of the origin towards infinity.

\bigskip

\noindent {\bf Connection with random planar maps}. In \cite{BBCK}, the authors relate a distinguished family of growth-fragmentation processes to the exploration of a  Boltzmann planar map, see Proposition 6.6 there. The mass of a particle in the growth-fragmentation represents the perimeter of a region in the planar map which is currently explored, a negative jump the splitting of the region into two smaller regions to be explored, and a positive jump the discovery of a face with large degree. In this setting, only a negative jump is a birth event. The area of the map is identified as  the limit of a natural martingale associated to the underlying branching random walk, see Corollary 6.7 there. 

On the other hand, a Boltzmann random map can also be seen as the gasket of a $O(n)$ loop model, see Section 8 of \cite{LGM11}. From this point of view, a positive jump of the growth-fragmentation stands for the discovery of a loop which still has to be explored, so that positive jumps will be birth events too. The signed growth-fragmentation $\overline{\mathbf{X}}$ of our paper would represent the   exploration of a planar map decorated with the $O(n)$ model with $n=2$, where the sign depends on the parity of the number of loops which surrounds the explored region. One could wonder whether we would have an intrinsic area  as in \cite{BBCK}. Actually, the natural martingale associated to the branching random walk converges to $0$: it is the so-called critical martingale in the branching random walk literature. The martingale to consider is then the derivative martingale, see Section \ref{s:deriv}, whose limit is proved to be twice the duration of the Brownian excursion (i.e. the inverse of an exponential random variable, see \eqref{gamma tilde}). This gives a conjectured limit of the area of  a $O(2)$ decorated planar map properly renormalized, see \cite{CCM}, Theorem 9, for the analogous results in the $O(n)$ model for $n\neq 2$.

\bigskip

The paper is organized as follows. In Section \ref{s:excursion}, we recall some excursion theory for the planar Brownian motion. Among others, we will define the locally largest fragment, which will be our Eve particle.  In Section \ref{Sec:Markovian}, we show the branching property, identify the law of the Eve particle with that of  $Z$ and exhibit the martingale in our context. Theorem \ref{thm:main} will be proved in Section \ref{s:cellsystem}, where we also show the relation with  \cite{BBCK}. Finally, we identify the limit of the derivative martingale in Section \ref{s:deriv}.

\bigskip

\textbf{Acknowledgements:} We are grateful to Jean Bertoin and Bastien Mallein for stimulating discussions, and to Juan Carlos Pardo for a number of helpful discussions regarding self-similar processes. After a first version of this article appeared online, Nicolas Curien pointed to us the connection with random planar maps, and the link between the duration of the excursion and the area of the map. We warmly thank him for his explanations. We also learnt that Timothy Budd in an unpublished note had already predicted the link between growth-fragmentations of \cite{BBCK} and  planar excursions.



\section{Excursions of Brownian motion in $\Hb$}
\label{s:excursion}

\subsection{The excursion process of Brownian motion in $\Hb$} \label{excursion process}
In this section, we recall some basic facts from excursion theory. Let $(X,Y)$ be a planar Brownian motion defined on the complete probability space $(\Omega, \F, \Pb)$, and $(\F_t)_{t\geq 0}$ be the usual augmented filtration. 

In addition, we call $\mathscr{X}$ the space of real-valued continuous functions $w$ defined on an interval $[0,R(w)]\subset[0,\infty)$, endowed with the usual $\sigma$-fields generated by the coordinate mappings $w\mapsto w(t\wedge R(w))$. Let also $\mathscr{X}_0$ be the subset of functions in $\mathscr{X}$ vanishing at their endpoint $R(w)$. We set $U := \left\{ u=(x,y) \in \mathscr{X}\times \mathscr{X}_0, \; u(0)=0 \; \text{and} \; R(x)=R(y) \right\}$ and $U_{\delta} := U \cup \{\delta\}$, where $\delta$ is a cemetery function and write $U^{\pm}$ for the set of such functions in $U$ with nonnegative and nonpositive imaginary part respectively. These sets are endowed with the product $\sigma-$field denoted $\mathscr{U}_{\delta}$ and the filtration $(\mathcal{F}_t)_{t\ge 0}$ adapted to the coordinate process on $U$. For $u\in U$, we take the obvious notation $R(u):=R(x)=R(y)$. Finally, let $(L_s)_{s\geq 0} = (L^Y_s)_{s\geq 0}$ denote the local time at $0$ of $Y$ and $\tau_s=\tau^Y_s$ its inverse defined by $\tau_s := \inf\{r>0, \; L_r >s\}$. Recall that the set of zeros of $Y$ is almost surely equal to the set of $\tau_s, \tau_{s^-}$ ; we refer to \cite{RY} for more details on local times.

\begin{Def} \label{excursion}
The \emph{excursion process} is the process $e=(e_s, s>0)$ with values in $(U_{\delta},\mathscr{U}_{\delta})$ defined on $(\Omega, \F, \Pb)$ by 
\begin{itemize}
    \item[(i)] if $\tau_s-\tau_{s^-}>0$, then 
    \[e_s : r\mapsto \left(X_{r+\tau_{s^-}}-X_{\tau_{s^-}}, Y_{r+\tau_{s^-}}\right), \quad r\leq \tau_s-\tau_{s^-},\]
    \item[(ii)] if $\tau_s-\tau_{s^-}=0$, then $e_s = \delta$.
\end{itemize}
\end{Def}
Figure \ref{excursion fig} is a (naive) drawing of such an excursion.

\begin{figure} 
\begin{center}
\includegraphics[scale=0.8]{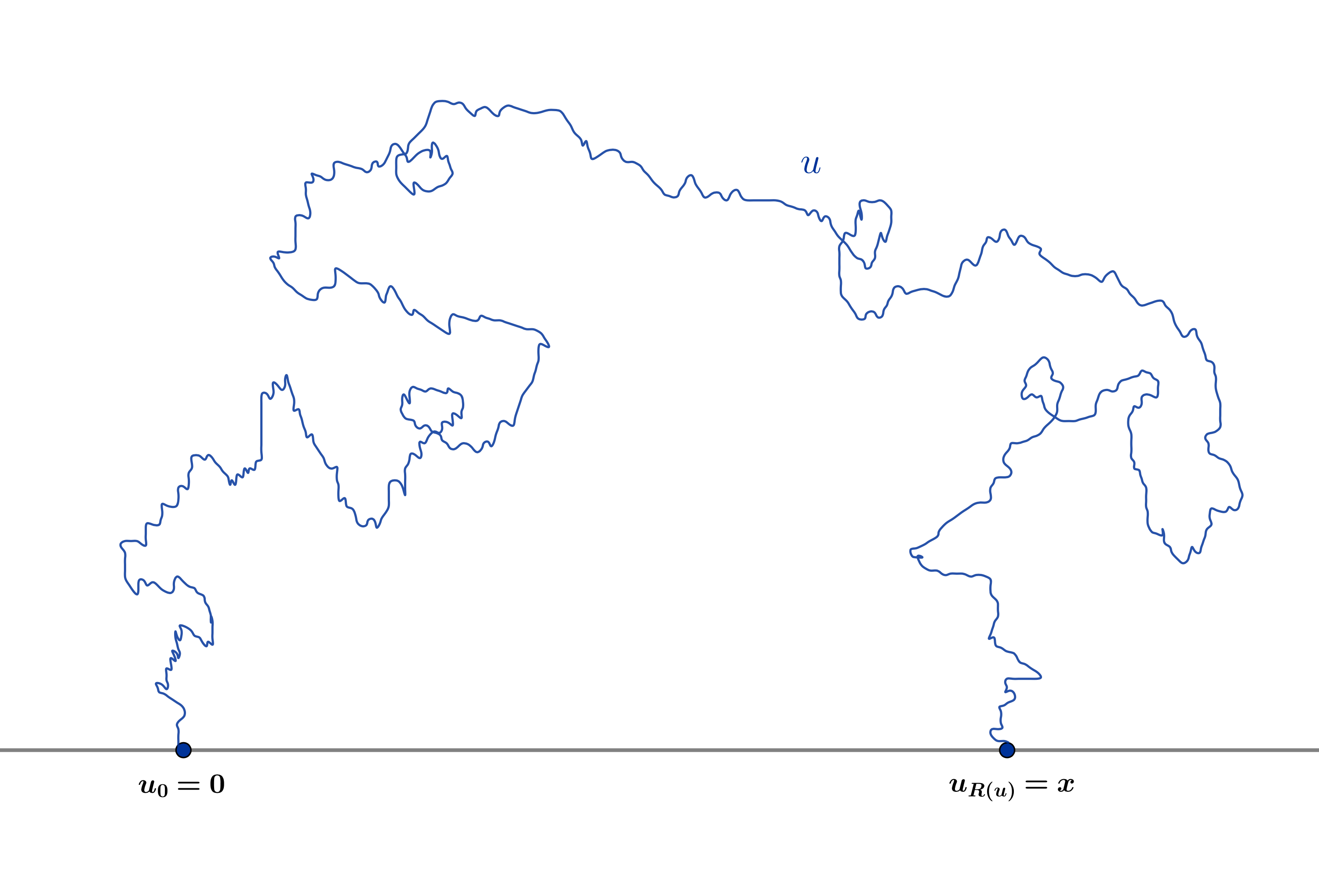}
\end{center}
\caption{Drawing of an excursion in the upper half-plane $\Hb$.}
\label{excursion fig}
\end{figure}

The next proposition follows from the one-dimensional case.
\begin{Prop}
The excursion process $(e_s)_{s>0}$ is a $(\F_{\tau_s})_{s>0}-$Poisson point process.
\end{Prop}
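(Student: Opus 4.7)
The plan is to lift the classical one-dimensional It\^o excursion theorem to the planar setting via the strong Markov property of $(X,Y)$. As a preliminary step I would note that $L=L^Y$ is a continuous additive functional adapted to $(\F_t)$, so that each $\tau_s$ is an $(\F_t)$-stopping time and the filtration $(\F_{\tau_s})_{s\geq 0}$ is well-defined and right-continuous.

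The key step will be to apply the strong Markov property of the planar Brownian motion at $\tau_{s_0}$ for a fixed $s_0 > 0$. The shifted process
\[
\bigl(X_{\tau_{s_0}+t}-X_{\tau_{s_0}},\; Y_{\tau_{s_0}+t}\bigr)_{t\ge 0}
\]
is a planar Brownian motion independent of $\F_{\tau_{s_0}}$, and the local time at $0$ of its second coordinate is $(L_{\tau_{s_0}+t}-s_0)_{t\ge 0}$. Consequently the shifted excursion point process $(e_{s_0+r})_{r>0}$ has the same law as $(e_s)_{s>0}$ and is independent of $\F_{\tau_{s_0}}$. Setting
\[
N := \sum_{s>0,\, e_s\neq\delta}\delta_{(s,e_s)},
\]
this shows that the restriction of $N$ to $(s_0,s_0+h]\times U$ has the same distribution as the restriction to $(0,h]\times U$ and is independent of $\F_{\tau_{s_0}}$; iterating yields stationarity and independence of increments of $N$ across any finite partition of $(0,\infty)$.

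To conclude, I would invoke the standard It\^o argument: for each measurable $A\subset U$, the counting process $s\mapsto N((0,s]\times A)$ has stationary independent increments and, since $\tau$ has only countably many jumps, no fixed discontinuities; hence it is a Poisson process on $\R_+$. The intensity of $N$ is therefore of the form $ds\otimes n(du)$ for a unique $\sigma$-finite measure $n$ on $U$, which is exactly the claim. The main (mild) obstacle will be the measurability and regularity of $(s,\omega)\mapsto e_s(\omega)$, but this is entirely parallel to the one-dimensional case and can be transcribed verbatim from Chapter XII of \cite{RY}; the planar aspect intervenes only through the strong Markov property of $(X,Y)$, the $X$-coordinate being carried along automatically.
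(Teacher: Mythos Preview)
Your proposal is correct and follows the same route the paper intends: the paper's ``proof'' is the single sentence that the proposition follows from the one-dimensional case, which is exactly what your argument does by transporting the standard It\^o excursion construction (strong Markov at $\tau_{s_0}$, stationary independent increments, then Poisson) from $Y$ to the pair $(X,Y)$. You have simply unpacked in detail what the paper leaves implicit.
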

We write $\n$ for the intensity measure of this Poisson point process. It is a measure on $U$, and we shall denote by $\n_+$ and $\n_-$ its restrictions to $U^+$ and $U^-$. We have the following expression for $\n$.
\begin{Prop}
$\n(\mathrm{d}x,\mathrm{d}y) = n(\mathrm{d}y)\Pb(X^{R(y)} \in \mathrm{d}x)$, where $n$ denotes the one-dimensional It\^o's measure on $\mathscr{X}_0$ and $X^T:=(X_t, \, t\in [0,T])$.
\end{Prop}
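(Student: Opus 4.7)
The idea is to condition on the second coordinate $Y$ and use the independence of $X$ and $Y$. Since the local time $L$ and its inverse $\tau$ are functionals of $Y$ alone, every time $\tau_s$ (and $\tau_{s^-}$) is an $(\F^Y_t)$-stopping time. By the independence of $X$ and $Y$, the process $X$ remains a Brownian motion with respect to the enlarged filtration obtained by adjoining $\F^Y_\infty$. Hence, at each jump time of $\tau$, the shifted process $r\mapsto X_{r+\tau_{s^-}}-X_{\tau_{s^-}}$ is a Brownian motion independent of $Y$, started from $0$. This already gives the intuition: the $y$-coordinate of $e_s$ is a $Y$-excursion distributed according to the one-dimensional Itô measure $n$, while, given $y$, the $x$-coordinate is an independent Brownian motion run during $[0,R(y)]$.

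To make this rigorous I would identify the intensity $\n$ via the compensation formula applied to test functions of product form $F(u)=f(y)g(x)$ with $f,g$ bounded measurable. Writing
\begin{equation*}
\Eb\!\left[\sum_{s>0} f(y_s)\,g(x_s)\mathbf{1}_{\{e_s\neq\delta\}}\right]
=\Eb\!\left[\Eb\!\left[\sum_{s>0} f(y_s)\,g(x_s)\mathbf{1}_{\{e_s\neq\delta\}}\,\Big|\,\F^Y_\infty\right]\right],
\end{equation*}
one uses the independence just mentioned to pull the inner expectation onto the $x$-factor only: conditionally on $\F^Y_\infty$, each $x_s$ is an independent Brownian path of deterministic length $R(y_s)$, so the inner sum equals $\sum_{s>0} f(y_s)\Eb\bigl[g(X^{R(y_s)})\bigr]$. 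Here I rely on the fact that the countable set of jump times of $\tau$ is $Y$-measurable and that, conditionally on $Y$, the Brownian motion $X$ restricted to the disjoint intervals $[\tau_{s^-},\tau_s]$ produces independent Brownian pieces (this is the usual measurable selection argument, identical to the one used to splice independent Brownian increments at countable families of stopping times).

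It then remains to apply the classical one-dimensional excursion theory to the outer expectation: the excursions of $Y$ form a Poisson point process with intensity $n$, so
\begin{equation*}
\Eb\!\left[\sum_{s>0} f(y_s)\Eb\bigl[g(X^{R(y_s)})\bigr]\right]
=\int_0^\infty \mathrm{d}s\int_{\mathscr{X}_0} n(\mathrm{d}y)\,f(y)\,\Eb\bigl[g(X^{R(y)})\bigr].
\end{equation*}
Combining the two displays and comparing with the compensation formula $\Eb[\sum_s F(e_s)]=\int_0^\infty\mathrm{d}s\int_U F(u)\,\n(\mathrm{d}u)$, which characterises $\n$ on product test functions (and hence on $\mathscr{U}_\delta$ by a monotone class argument), yields exactly $\n(\mathrm{d}x,\mathrm{d}y)=n(\mathrm{d}y)\,\Pb(X^{R(y)}\in\mathrm{d}x)$.

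The substantive point is really the conditional independence statement in the second paragraph: once one is comfortable that at every excursion interval of $Y$ the independent Brownian motion $X$ contributes a fresh, independent path of the prescribed length, the rest is a direct application of one-dimensional excursion theory together with the Poisson point process compensation formula. I expect no genuine obstacle beyond a careful handling of the simultaneous use of the (uncountably many) excursion intervals.
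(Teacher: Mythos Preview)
The paper does not actually give a proof of this proposition; it is simply stated, the authors evidently regarding it as an immediate consequence of the independence of $X$ and $Y$ together with one-dimensional excursion theory. Your argument supplies exactly the details they omit, and the key observation --- that conditionally on the entire path of $Y$, the increments of $X$ over the (countably many, $Y$-measurable) excursion intervals are independent Brownian paths of the prescribed durations --- is correct and is indeed the heart of the matter.

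One small technical point worth tightening: in the identity $\Eb[\sum_{s>0} F(e_s)]=\int_0^\infty\mathrm{d}s\int_U F\,\mathrm{d}\n$ both sides are typically $+\infty$ for merely bounded $F$, since there are infinitely many excursions. To make the identification of $\n$ rigorous you should either restrict the sum to $s\le t$ for fixed finite $t$, or take $F$ supported on a set of finite $\n$-measure (e.g.\ excursions whose height exceeds some $\varepsilon>0$). With that adjustment your argument is complete; a monotone class argument then extends the identity from product test functions to all of $\mathscr{U}_\delta$, as you note.
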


\subsection{The Markov property under $\n$}
 For any $u\in U$ and any $a>0$, let $T_a := \inf\{0\le t\le R(u), \; y(t)=a\}$ be the hitting time of $a$ by $y$. Then we have the following kind of Markov property under $\n_+$.

\begin{Lem}{(Markov property under $\n$)} \label{Markov under n}

Under $\n_+$, on the event $\{T_a<\infty\}$, the process $\left(u(T_a+t)-u(T_a)\right)_{0\le t\leq R(u)-T_a}$ is independent of $\mathcal{F}_{T_a}$ and has the law of a Brownian motion killed at the time $\rho$ when it reaches $\{\Im(z) = -a\}$. 
\end{Lem}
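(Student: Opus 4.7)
The plan is to reduce the two-dimensional Markov property to a combination of two well-known facts, exploiting the product structure $\n_+(\mathrm{d}x,\mathrm{d}y) = n_+(\mathrm{d}y)\Pb(X^{R(y)} \in \mathrm{d}x)$ recorded in the previous proposition, where $n_+$ denotes the restriction of the one-dimensional It\^o excursion measure to positive excursions. Since the hitting time $T_a$ only depends on $y$, this disintegration will let me handle the $y$-coordinate first and then the $x$-coordinate conditionally on $y$.

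First, I would invoke the classical Markov property of the It\^o excursion measure $n_+$ at the hitting time $T_a$ (see, e.g., Chapter XII of \cite{RY}): on the event $\{T_a < \infty\}$, the shifted process $(y(T_a + t) - a)_{0 \le t \le R(y)-T_a}$ is distributed as a standard linear Brownian motion run until its first hit of $-a$, and is independent of the $\sigma$-field $\sigma(y(s\wedge T_a),\, s\ge 0)$. This step contains essentially no new content; it is the one-dimensional analogue of what I want to prove.

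Second, I would use that, conditionally on $y$, the coordinate $x$ is a Brownian motion indexed by $[0, R(y)]$. Because $T_a$ is $\sigma(y)$-measurable, the strong Markov property of one-dimensional Brownian motion applied to $x$ (conditionally on $y$) yields that $(x(T_a + t) - x(T_a))_{0 \le t \le R(y) - T_a}$ is a Brownian motion, independent of $(x(s))_{0 \le s \le T_a}$ and, jointly with $y(T_a+\cdot)$, independent of $y|_{[0,T_a]}$.

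Combining these two statements and integrating out $y$ against $n_+$, I obtain that under $\n_+$ and on $\{T_a<\infty\}$, the shifted planar path $(u(T_a+t)-u(T_a))_{0\le t\le R(u)-T_a}$ is independent of $\mathcal{F}_{T_a}$ and is distributed as a planar Brownian motion from $0$ killed when its imaginary part hits $-a$, which is exactly the desired conclusion. The main (minor) obstacle I expect is bookkeeping: making sure that the conditional independence obtained from the strong Markov property of $x$ given $y$ combines cleanly with the Markov property of $n_+$ to produce joint independence from $\mathcal{F}_{T_a}$ rather than only from the marginal filtrations; this is a standard measure-theoretic manipulation once the disintegration above is in hand.
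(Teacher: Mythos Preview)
Your proposal is correct and follows essentially the same route as the paper: both arguments exploit the product structure $\n_+(\mathrm{d}x,\mathrm{d}y)=n_+(\mathrm{d}y)\Pb(X^{R(y)}\in\mathrm{d}x)$, apply the (simple) Markov property for the $x$-coordinate at the $y$-measurable time $T_a(y)$, and then invoke the Markov property of the one-dimensional It\^o measure $n_+$ at $T_a$ from \cite{RY}, Chap.~XII, Theorem~4.1. The paper carries this out explicitly with test functions $f,g,h_1,h_2$, which is exactly the ``bookkeeping'' you anticipate as the only minor obstacle.
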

\begin{proof}
This results from the fact that under the one-dimensional It\^o's measure $n_+$, the coordinate process $t\mapsto y(t)$ has the transition of a Brownian motion killed when it reaches $0$ (cf. Theorem 4.1, Chap. XII in \cite{RY}).

Let $f,g,h_1,h_2$ be nonnegative measurable functions defined on $\mathscr{X}$. For simplicity, write for $w\in\mathscr{X}$ or $w\in C([0,\infty))$, $w(\theta_r) = w(r+\cdot)-w(r)$ and for $T>0$, $w^T:=(w(t),t \in [0,T])$.  We want to compute 
\begin{align*}
    &\int_{U} f(x(\theta_{T_a})) g(y(\theta_{T_a})) h_1(x^{T_a}) h_2(y^{T_a}) \mathds{1}_{\{T_a<\infty\}} \n_+(\mathrm{d}x, \mathrm{d}y) \\
    &= \int_{U} f(x(\theta_{T_a})) g(y(\theta_{T_a})) h_1(x^{T_a}) h_2(y^{T_a}) \mathds{1}_{\{T_a<\infty\}} n_+(\mathrm{d}y) \Pb(X^{R(y)}\in\mathrm{d}x) \\
    &= \int_{\mathscr{X}_0} g(y(\theta_{T_a})) h_2(y^{T_a}) \mathds{1}_{\{T_a<\infty\}} \Eb\left[f\left(\widetilde{X}^{R(y)-T_a(y)}\right) h_1\left(X^{T_a(y)}\right)\right] n_+(\mathrm{d}y) 
\end{align*}
where $\widetilde{X} = X(\theta_{T_a(y)})$, and for $y\in \mathscr{X}_0 $, $T_a=T_a(y)$ is  the hitting time of $a$ by $y$. Using the simple Markov property at time $T_a(y)$ in the above expectation gives 
\begin{align*}
    &\int_{U} f(x(\theta_{T_a})) g(y(\theta_{T_a})) h_1(x^{T_a}) h_2(y^{T_a}) \mathds{1}_{\{T_a<\infty\}} \n_+(\mathrm{d}x, \mathrm{d}y) \\
    &= \int_{\mathscr{X}_0} g(y(\theta_{T_a})) h_2(y^{T_a}) \mathds{1}_{\{T_a<\infty\}}  \Eb\left[f\left(X^{R(y)-T_a(y)}\right)\right]  \Eb\left[h_1\left(X^{T_a(y)}\right)\right] n_+(\mathrm{d}y).  
\end{align*}
Then we can use the Markov property under $n_+$ stated in Theorem 4.1, Chap. XII in \cite{RY}: 
\begin{align*}
    &\int_{U} f(x(\theta_{T_a})) g(y(\theta_{T_a})) h_1(x^{T_a}) h_2(y^{T_a}) \mathds{1}_{\{T_a<\infty\}} \n_+(\mathrm{d}x, \mathrm{d}y) \\
    &= \int_{\mathscr{X}_0}  \Eb\left[h_1\left(X^{T_a(y)}\right)\right] h_2(y^{T_a}) \mathds{1}_{\{T_a<\infty\}} n_+(\mathrm{d}y) \Eb \left[g\left(Y^{T_{-a}}\right)   f\left(X^{T_{-a}}\right) \right]  \\
    &= \int_{U} h_1(x^{T_a}) h_2(y^{T_a}) \mathds{1}_{\{T_a<\infty\}} \n_+(\mathrm{d}x,\mathrm{d}y) \Eb \left[g\left(Y^{T_{-a}}\right)   f\left(X^{T_{-a}}\right) \right].
\end{align*}
This concludes the proof of Lemma \ref{Markov under n}.
\end{proof}


\subsection{Excursions above horizontal levels} \label{excursion level}
We next set some notation for studying the excursions above a given level. Let $a\geq0$ and $u=(x,y)\in U^+$. In the following list of definitions, one should think of $u$ as a Brownian excursion in the sense of Definition $\ref{excursion}$.

Define 
\[\mathcal{I}(a) = \left\{s\in [0,R(u)], \; y(s)>a\right\}.\] 
Then by continuity $\mathcal{I}(a)$ is a countable (possibly empty) union of disjoint open intervals $I_1,I_2,\ldots$ For any such interval $I = \left(i_-,i_+\right)$, take $u_{I}(s) = u(i_- +s)-u(i_-), 0\leq s\leq i_+ -i_-,$ for the restriction of $u$ to $I$, and $\Delta u_I = x(i_+)-x(i_-)$ for the \emph{size} or \emph{length} of $u_I$. Note that $u_I\in U$.

If now $z=u(t)$, $0\leq t\leq R(u)$, is on the path of $u$ and $0\leq a< \Im(z)$, we define 
\[e_a^{(t)} = e_a^{(t)}(u) = u_I,\]
where $I$ is the unique open interval in the above partition of $\mathcal{I}(a)$ such that $t\in I$ (note that this depends on $t$ and not only on $z$, which could be a double point). By convention, we also set for $a=\Im(z)$, $e_a^{(t)} = z$ and $\Delta e_a^{(t)} = 0$. This is represented in an excessively naive way in Figure \ref{excursion-level} below. 

\begin{figure} 
\begin{center}
\includegraphics[scale=0.75]{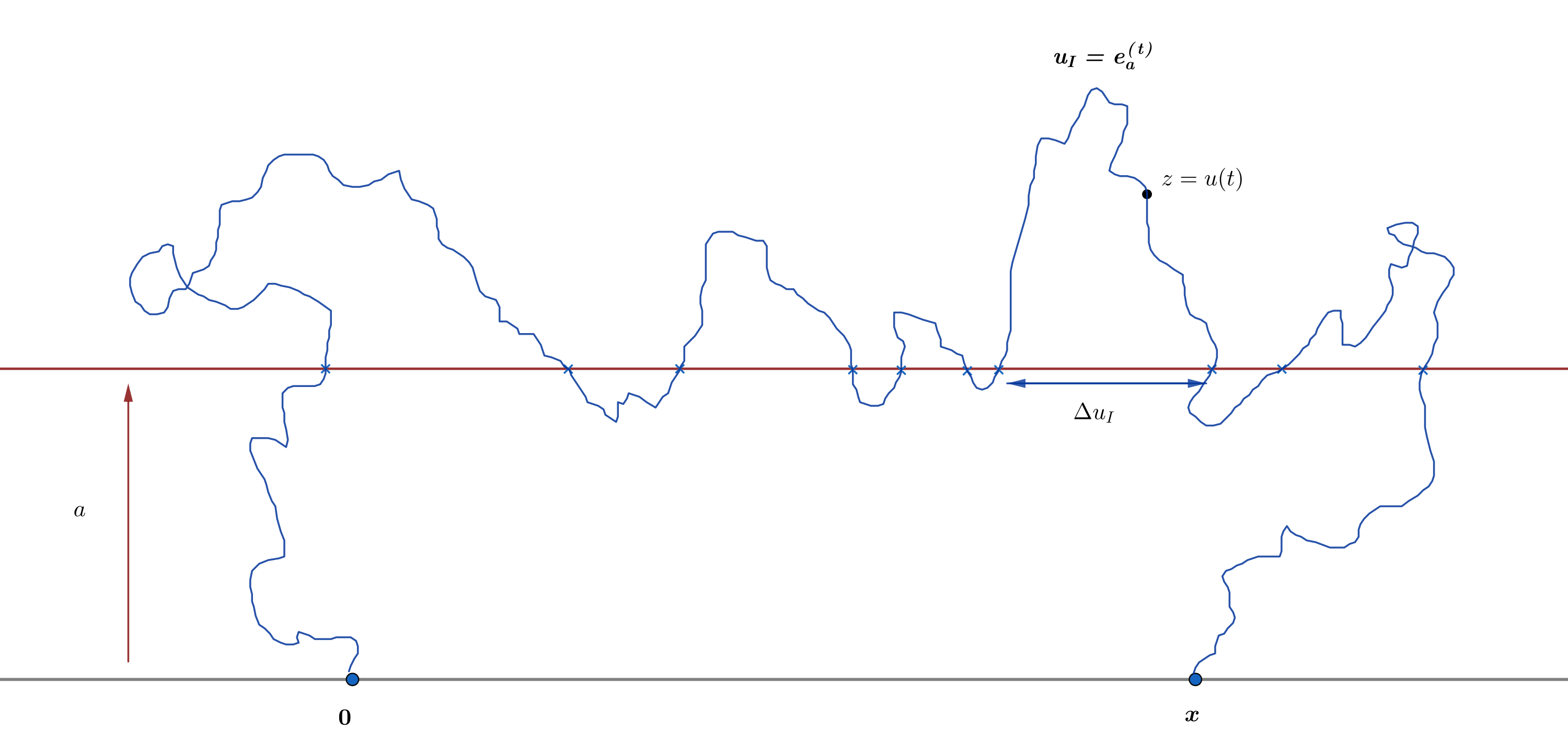}
\end{center}
\caption{Excursions above the level $t$.}
\label{excursion-level}
\end{figure}

For $z=u(t)$, let $F^{(t)}:a\in[0, \Im(z)]\mapsto \Delta e_a^{(t)}$. Define
\begin{align}\label{def:uleft}
u^{t, \leftarrow}
&:=
\left(u(t-s)-u(t)\right)_{0\leq s\leq t},
\\
u^{t,\rightarrow}
&:=
\left(u(t+s)-u(t)\right)_{0\leq s\leq R(u)-t}. \label{def:uright}
\end{align}

\noindent If we set for $a\in [0,y(t)]$,
\begin{align}\label{def:Tleft}
T_a^{t,\leftarrow} 
&:= 
\inf\left\{s\geq 0, \; y(t-s)=a\right\}, 
\\
T_a^{t,\rightarrow} 
&:= 
\inf\left\{s\geq 0, \; y(t+s)=a\right\},
\label{def:Tright}
\end{align}

\noindent we can write $F^{(t)}(a)=u^{t,\rightarrow}(T_a^{t,\rightarrow}) - u^{t,\leftarrow}(T_a^{t,\leftarrow} )$.
\begin{Lem}\label{l:cadlag}
For any $u\in U^+$, for all $0\leq t\leq R(u)$, the function $F^{(t)}$ is c\`adl\`ag.
\end{Lem}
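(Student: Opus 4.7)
The plan is to exploit the formula established just above the lemma,
\[
F^{(t)}(a) = u^{t,\rightarrow}\!\left(T_a^{t,\rightarrow}\right) - u^{t,\leftarrow}\!\left(T_a^{t,\leftarrow}\right),
\]
and reduce the càdlàg property of $F^{(t)}$ to a regularity statement for the two hitting-time maps $a\mapsto T_a^{t,\rightarrow}$ and $a\mapsto T_a^{t,\leftarrow}$. Since $u$ is continuous on the compact interval $[0,R(u)]$, càdlàg regularity of these maps will be transported to $F^{(t)}$ by composition and subtraction. The degenerate case $y(t)=0$ (where the domain of $F^{(t)}$ collapses to $\{0\}$) and the endpoint $a=y(t)$ (where $T_{y(t)}^{t,\rightarrow}=T_{y(t)}^{t,\leftarrow}=0$, matching the convention $F^{(t)}(y(t))=0$) require only a line of verification, so the substantive work lies at an interior level $a_0\in[0,y(t))$.

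First I would verify that $a\mapsto T_a^{t,\rightarrow}$ is càdlàg on $[0,y(t)]$. The map is non-increasing in $a$, so left limits exist automatically by monotonicity. For right-continuity at $a_0$, fix a sequence $a_n\downarrow a_0$; monotonicity yields $T_{a_n}^{t,\rightarrow}\uparrow L$ for some limit $L\le T_{a_0}^{t,\rightarrow}$. Continuity of $y$ gives $y(t+L)=\lim_n y(t+T_{a_n}^{t,\rightarrow})=\lim_n a_n=a_0$, and then the minimality defining the hitting time forces $L\ge T_{a_0}^{t,\rightarrow}$, hence $L=T_{a_0}^{t,\rightarrow}$. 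The identical argument applied to the time-reversed path $s\mapsto u(t-s)$ handles $a\mapsto T_a^{t,\leftarrow}$.

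Finally I would assemble: continuity of $u$ on $[0,R(u)]$ lets me compose, so the maps $a\mapsto u(t+T_a^{t,\rightarrow})$ and $a\mapsto u(t-T_a^{t,\leftarrow})$ inherit right-continuity from $T_\bullet^{t,\rightarrow}$ and $T_\bullet^{t,\leftarrow}$, and their left limits exist because left limits of $T_\bullet^{t,\pm}$ are sent to limits by the continuous $u$. Taking the difference yields the stated regularity of $F^{(t)}$.

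I do not anticipate a real obstacle; the only point of care is the orientation convention, since the hitting-time maps are non-increasing in $a$, so their càdlàg character amounts to right-continuity in $a$, which is precisely what continuity of the coordinate $y$ provides. Intuitively, the jumps of $F^{(t)}$ at a level $a_0$ come from excursions of $y(t\pm \cdot)$ back above $a_0$ after the first downward crossing, i.e. flat stretches of the running infima of $y(t\pm \cdot)$; this picture will be useful later when identifying the excursion above $a$ containing $t$ as a single fragment in the cell system.
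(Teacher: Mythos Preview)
Your proposal is correct and follows essentially the same approach as the paper: reduce to the c\`adl\`ag property of the hitting-time maps $a\mapsto T_a^{t,\rightarrow}$ and $a\mapsto T_a^{t,\leftarrow}$, then compose with the continuous $u$. The only difference is that the paper dispatches the c\`adl\`ag property of the hitting times by citing a standard result on inverses of continuous functions (Lemma~4.8, Chapter~0 of Revuz--Yor), whereas you give the monotonicity-plus-continuity argument directly; both are fine.
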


\begin{proof}
Fix $t\in [0,R(u)]$. We want to show that $F^{(t)}$ is c\`adl\`ag on $[0,y(t)]$.  By usual properties of inverse of continuous functions (see Lemma 4.8 and the remark following it in Chapter 0 of Revuz-Yor \cite{RY}), $a\mapsto T_a^{t,\leftarrow}$ and  $a\mapsto T_a^{t,\rightarrow}$ are c\`adl\`ag (in $a$). Hence $F^{(t)}$ is c\`adl\`ag since $u$ is continuous.  
\end{proof}


\subsection{Bismut's description of It\^o's measure in $\Hb$}
In the case of one-dimensional It\^o's measure $n_+$, Bismut's description roughly states that if we pick an excursion $u$ at random according to $n_+$, and some time $0\leq t\leq R(u)$ according to the Lebesgue measure, then the "law" of $u(t)$ is the Lebesgue measure and conditionally on $u(t)=\alpha$, the left and right parts of $u$ (seen from $u(t)$) are independent Brownian motions killed at $-\alpha$ (see Theorem 4.7, Chap. XII in  \cite{RY}). We deduce an analogous result in the case of It\^o's measure in $\Hb$ and we apply it to show that for $\n_+-$almost every excursion, there is no loop remaining above any horizontal level. 
\begin{Prop} \label{Bismut} (Bismut's description of It\^o's measure in $\Hb$)

Let $\overline{\n}_+$ be the measure defined on $\R_+\times U^+$ by
\[\overline{\n}_+(\mathrm{d}t,\mathrm{d}u) = \mathds{1}_{\{0\leq t\leq R(u)\}} \mathrm{d}t \, \n_+(\mathrm{d}u).\]
Then under $\overline{\n}_+$ the "law" of $(t,(x,y))\mapsto y(t)$ is the Lebesgue measure $\mathrm{d}\alpha$ and conditionally on $y(t)=\alpha$, $u^{t, \leftarrow}=\left(u(t-s)-u(t)\right)_{0\leq s\leq t}$ and $u^{t,\rightarrow}=\left(u(t+s)-u(t)\right)_{0\leq s\leq R(u)-t}$ are independent Brownian motions killed when reaching $\left\{\Im(z)=-\alpha\right\}$.
\end{Prop}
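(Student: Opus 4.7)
The strategy is to reduce to the one-dimensional Bismut description of It\^o's measure $n_+$ (Theorem 4.7, Chap.~XII in \cite{RY}) via the product disintegration $\n_+(\mathrm{d}x,\mathrm{d}y) = n_+(\mathrm{d}y)\Pb(X^{R(y)} \in \mathrm{d}x)$ recalled in Section~\ref{excursion process}. Accordingly, I first handle the $y$-coordinate using 1D Bismut, then handle the $x$-coordinate using the conditional independence of $x$ from $y$ together with the Markov property and time-reversal invariance of linear Brownian motion, and finally combine the two pieces.

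Concretely, 1D Bismut asserts that under $\overline{n}_+^y(\mathrm{d}t,\mathrm{d}y) := \mathds{1}_{\{0 \leq t \leq R(y)\}}\,\mathrm{d}t\,n_+(\mathrm{d}y)$, the ``law'' of $y(t)$ is $\mathds{1}_{\{\alpha > 0\}}\,\mathrm{d}\alpha$, and conditionally on $y(t)=\alpha$ the reversed-left path $(y(t-s))_{0 \leq s \leq t}$ and the right path $(y(t+s))_{0 \leq s \leq R(y)-t}$ are two independent Brownian motions started from $\alpha$ and killed upon hitting $0$. After translation by $-\alpha$, the processes $(y(t-s)-\alpha)_{0 \leq s \leq t}$ and $(y(t+s)-\alpha)_{0 \leq s \leq R(y)-t}$ are independent Brownian motions started at $0$ killed upon reaching $-\alpha$, which are precisely the $y$-components of $u^{t,\leftarrow}$ and $u^{t,\rightarrow}$.

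For the $x$-coordinate, the product form of $\n_+$ says that, conditionally on $y$, the path $x$ is a standard Brownian motion on $[0,R(y)]$ independent of $y$. The strong Markov property at time $t$ gives that $(x(t+s)-x(t))_{0 \leq s \leq R(y)-t}$ is a Brownian motion independent of $(x(s))_{0 \leq s \leq t}$, and time-reversal invariance of Brownian motion implies that $(x(t-s)-x(t))_{0 \leq s \leq t}$ is also a Brownian motion. Combining the two coordinates through the product disintegration, and testing against arbitrary bounded measurable functionals of the form $F(u^{t,\leftarrow})G(u^{t,\rightarrow})$, Fubini's theorem together with 1D Bismut produce the desired factorization.

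I expect the main (mild) obstacle to be purely bookkeeping: one has to carefully track signs and the orientation of the reversed-left path so that the killing condition $\{\Im(z)=-\alpha\}$ comes out with the correct sign on both $u^{t,\leftarrow}$ and $u^{t,\rightarrow}$, and verify that the conditional independence of the $x$-pieces (given $(y,t)$) combines correctly with that of the $y$-pieces (given $y(t)=\alpha$) to yield the full planar independence. The conceptual content is entirely in the 1D statement from \cite{RY}.
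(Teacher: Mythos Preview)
Your approach is correct and is exactly the one the paper indicates: the paper's entire proof is the single remark that the proposition ``is a direct consequence of the one-dimensional analogous result'' (Theorem 4.7, Chap.~XII in \cite{RY}), and your write-up simply spells out that reduction via the disintegration $\n_+(\mathrm{d}x,\mathrm{d}y)=n_+(\mathrm{d}y)\Pb(X^{R(y)}\in\mathrm{d}x)$. The only cosmetic point is that ``strong Markov property at time $t$'' should just be the simple Markov property, since $t$ is deterministic once $y$ is fixed.
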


See Figure \ref{Bismut fig}. Proposition \ref{Bismut} is a direct consequence of the one-dimensional analogous result, for which we refer to \cite{RY} (see Theorem 4.7, Chapter XII).

The next proposition ensures that for almost every excursion under $\n_+$, there is no loop growing above any horizontal level. Let 
\[\mathscr{L} := \{u\in U^+, \; \exists 0\leq t\leq R(u), \; \exists 0\leq a<y(t), \; \Delta e_a^{(t)}(u) = 0\},\]
be the set of excursions $u$ having a loop remaining above some level $a$.
Then we have : 
\begin{Prop} \label{loop}
\[\n_+\left(\mathscr{L}\right) = 0.\]
\end{Prop}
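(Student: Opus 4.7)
The plan is to reduce the almost-sure absence of a loop above every level to the non-return to $0$ of a one-dimensional Cauchy process, via Bismut's description. The first step is a Fubini-type reduction. If an excursion $u \in \mathscr{L}$ has a loop above some level $a$ in an open excursion interval $I = (i_-, i_+)$, then for every $t \in I$ one has $e_a^{(t)}(u) = u_I$, and consequently $\Delta e_a^{(t)}(u) = 0$. Hence for any $u \in \mathscr{L}$ the Lebesgue measure of
\[
\{t \in [0, R(u)] : \exists\, a \in [0, y(t)), \; \Delta e_a^{(t)}(u) = 0\}
\]
is strictly positive. Integrating against $\overline{\n}_+(\mathrm{d}t,\mathrm{d}u) = \mathds{1}_{\{0 \le t \le R(u)\}}\,\mathrm{d}t\,\n_+(\mathrm{d}u)$, it therefore suffices to show that, for $\overline{\n}_+$-almost every pair $(t,u)$, one has $\Delta e_a^{(t)}(u) \neq 0$ for every $a \in [0, y(t))$.

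Next I apply Bismut's description (Proposition \ref{Bismut}): conditionally on $y(t) = \alpha$, the two shifted pieces $u^{t,\leftarrow}$ and $u^{t,\rightarrow}$ are independent planar Brownian motions started at $0$ and killed when their imaginary part reaches $-\alpha$. Using the identity $F^{(t)}(a) = u^{t,\rightarrow}(T_a^{t,\rightarrow}) - u^{t,\leftarrow}(T_a^{t,\leftarrow})$ derived in Section \ref{excursion level}, the event $\Delta e_a^{(t)}(u) = 0$ becomes the equality of the real parts of these two independent planar Brownian motions at their first hit of the horizontal line at height $a - \alpha$. Recall the classical fact that if $(B, Y)$ is a planar Brownian motion started at $0$ and $\sigma_r := \inf\{s : Y_s = -r\}$, then $(B_{\sigma_r})_{r \ge 0}$ is a Cauchy process (a Brownian motion time-changed by the independent stable $1/2$ subordinator $\sigma$). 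Applying this to both sides and setting $r = \alpha - a$, I obtain two independent Cauchy processes $\beta^L, \beta^R$ on $[0, \alpha]$ started from $0$ with $F^{(t)}(a) = \beta^R_{\alpha - a} - \beta^L_{\alpha - a}$.

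The problem is therefore reduced to showing that, for the (rescaled) Cauchy process $\gamma_r := \beta^R_r - \beta^L_r$ started at $0$, one has $\gamma_r \neq 0$ for every $r \in (0, \alpha]$ almost surely. Here I invoke the polarity of singletons for the one-dimensional Cauchy process: the index of stability equals the ambient dimension, so starting from any $y \neq 0$ the process almost surely never hits $0$. To conclude, fix any $\epsilon > 0$; since $\gamma_\epsilon$ has a density, $\gamma_\epsilon \neq 0$ almost surely, and by polarity combined with the strong Markov property at time $\epsilon$, the process $\gamma$ does not hit $0$ on $[\epsilon, \infty)$ a.s. Letting $\epsilon$ tend to $0$ along a countable sequence yields $\gamma_r \neq 0$ for every $r > 0$ almost surely, which is what we needed.

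The main obstacle, in my view, is not a technical difficulty but the conceptual identification of the right reduction: passing from the apparently pointwise-in-$a$ event ``there exists $a \in [0, y(t))$ with $F^{(t)}(a) = 0$'' to a single statement about a Cauchy process not returning to $0$. Once the Bismut decomposition is combined with the Brownian motion / Cauchy process correspondence, the polarity of points for the one-dimensional Cauchy process disposes of the result.
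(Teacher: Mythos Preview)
Your proof is correct and follows essentially the same route as the paper: the Fubini reduction from $\n_+$ to $\overline{\n}_+$ via the observation that a loop above level $a$ produces a whole interval of times $t$ with $\Delta e_a^{(t)}=0$, followed by Bismut's description to turn $F^{(t)}$ into the difference of two independent Cauchy processes, and finally the polarity of points for the Cauchy process. The only cosmetic differences are that you present the Fubini step first rather than last, and you spell out the $\epsilon\downarrow 0$ argument for why a Cauchy process started at $0$ does not return to $0$ on $(0,\alpha]$, which the paper leaves implicit in its citation of polarity.
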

\begin{proof}
We first prove the result under $\overline{\n}_+$, namely
\[\overline{\n}_+\left(\{(t,u)\in \R_+\times U^+, \;  \; \exists 0\leq a<y(t), \; \Delta e_a^{(t)}(u) = 0\}\right) = 0.\]
Recall the notation \eqref{def:uleft}-\eqref{def:Tright}. From Bismut's description of $\overline{\n}_+$ we get
\begin{align*}
&\overline{\n}_+\left(\{(t,u)\in \R_+\times U^+, \;  \; \exists 0\leq a<y(t), \; \Delta e_a^{(t)}(u) = 0\}\right) \\
&= \overline{\n}_+\left(\{(t,u)\in \R_+\times U^+, \;  \; \exists 0\leq a<y(t), \; u^{t,\rightarrow}(T_a^{t,\rightarrow}) = u^{t,\leftarrow}(T_a^{t,\leftarrow})\}\right) \\
&= \int_0^{\infty} \mathrm{d}\alpha \, \Pb\left(\exists 0< a\le \alpha, \, X_{T_{a}} = X'_{T'_{a}}\right),
\end{align*}
where $X$ and $X'$ are independent linear Brownian motions, and $T_{a}$ and $T'_{a}$ are hitting times of $a$ of other independent Brownian motions (corresponding to the imaginary parts). Now, $X_{T_{a}}$ and $X'_{T'_{a}}$ are independent symmetric Cauchy processes, and therefore $X_{T_{a}} - X'_{T'_{a}}$ is again a Cauchy process (see Section 4, Chap. III of \cite{RY}). Since points are polar for the symmetric Cauchy process (see \cite{Ber4}, Chap. II, Section 5), we obtain $\Pb\left(\exists 0< a\le \alpha, \, X_{T_{a}} = X'_{T'_{a}}\right)=0$ and under $\overline{\n}_+$ the result is proved. 

\begin{figure} 
\begin{center}
\includegraphics[scale=0.7]{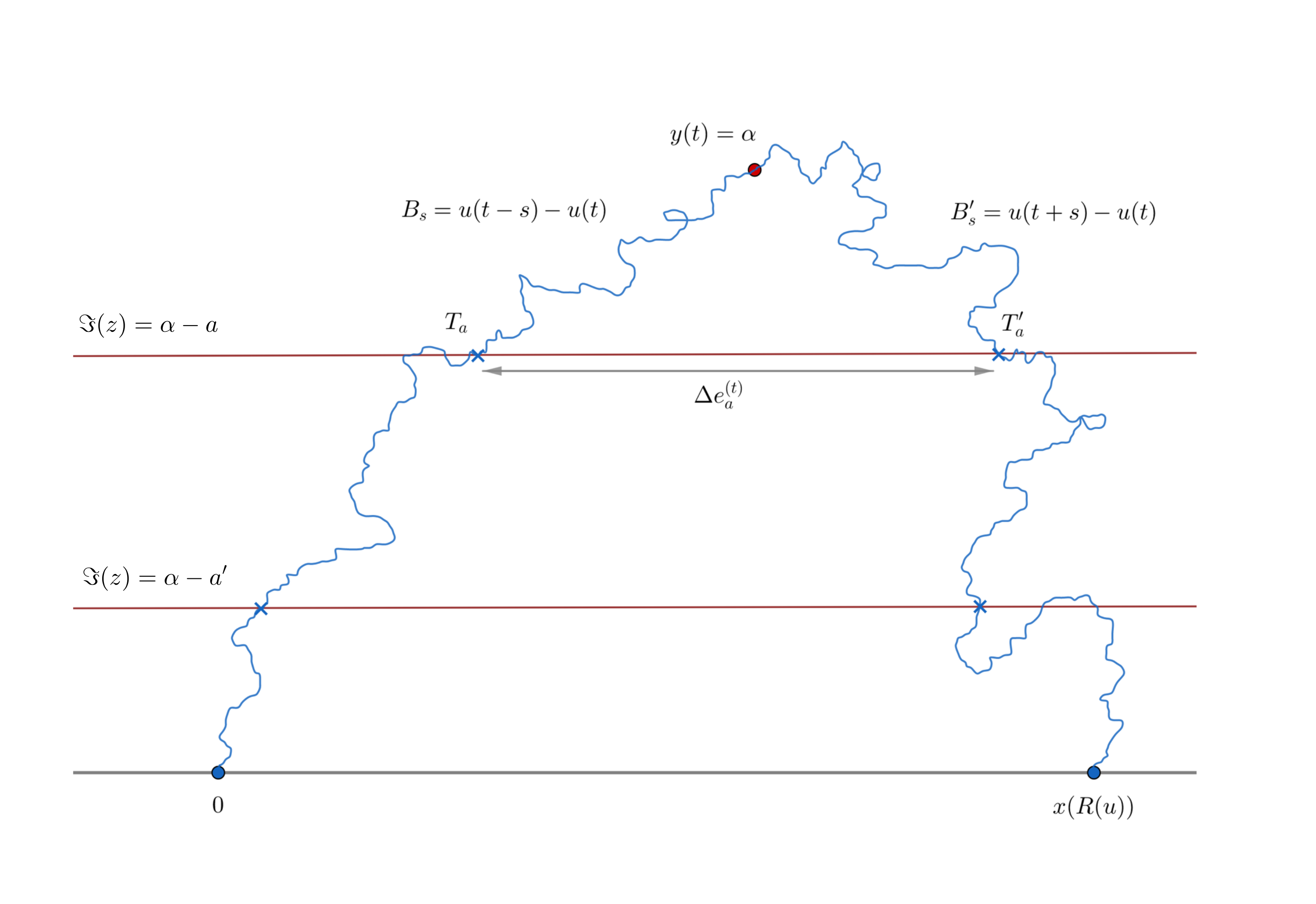}
\end{center}
\caption{Bismut's description of $\n_+$}
\label{Bismut fig}
\end{figure}

To extend the result to $\n_+$, we notice that if $u\in \mathscr{L}$, then the set of $t$'s satisfying the definition of $\mathscr{L}$ has positive Lebesgue measure: namely, it contains all the times until the loop comes back to itself. This translates into 
\[\mathscr{L} \subset \left\{u\in U^+, \, \int_{0}^{R(u)} \mathds{1}_{\{\exists 0\leq a<y(t), \; \Delta e_a^{(t)}(u) = 0\}}\mathrm{d}t >0\right\}.\]
But 
\begin{align*}
&\n_+\left(\int_{0}^{R(u)} \mathds{1}_{\{\exists 0\leq a<y(t), \; \Delta e_a^{(t)}(u) = 0\}}\mathrm{d}t\right) \\
&= \int_{U^+} \int_{0}^{R(u)} \mathds{1}_{\{\exists 0\leq a<y(t), \; \Delta e_a^{(t)}(u) = 0\}}\mathrm{d}t \, \n_+(\mathrm{d}u) \\
&= \overline{\n}_+\left(\{(t,u)\in \R_+\times U^+, \;  \; \exists 0\leq a<y(t), \; \Delta e_a^{(t)}(u) = 0\}\right).
\end{align*}
Hence, by the first step of the proof, 
\[\n_+\left(\int_{0}^{R(u)} \mathds{1}_{\{\exists 0\leq a<y(t), \; \Delta e_a^{(t)}(u) = 0\}}\mathrm{d}t\right) = 0,\]
which gives $\displaystyle \int_{0}^{R(u)} \mathds{1}_{\{\exists 0\leq a<y(t), \; \Delta e_a^{(t)}(u) = 0\}}\mathrm{d}t=0$ for $\n_+-$almost every excursion, and the desired result.
\end{proof}


\subsection{The locally largest excursion}
In \cite{BBCK}, the authors give a canonical way to construct the growth-fragmentation, through the so-called locally largest fragment. We want to mimic this construction in our case.

\bigskip

\noindent In order to define the locally largest excursion, we set for $u \in U^+$ and $0\le t \le R(u)$,
\[
S(t) := \sup\left\{ a\in [0,y(t)], \quad \forall \, 0\leq a'\leq a, \; \big|F^{(t)}(a')\big| \geq \big|F^{(t)}(a'^-)-F^{(t)}(a')\big|\right\}.
\]
Observe that the supremum is taken over a non-empty set by Lemma \ref{l:cadlag} as soon as $y(t)>0$ and $u(R(u))\neq 0$. Let 
\[ S := \underset{0\leq t\leq R(u)}{\sup} S(t).\]
In the case of Brownian excursions, the following proposition holds.

\begin{Prop} \label{locally largest prop}
For almost every $u$ under $\n_+$, there exists a unique $0\leq \tb\leq R(u)$ such that $S(\tb) = S$. Moreover, $S=\Im(\zb)$ where $\zb=u(\tb)$.

We call $\left(e_a^{(\tb)}\right)_{0\leq a\leq \Im(\zb)}$ the \emph{locally largest excursion} and $\left(\Xi(a) =\Delta e_a^{(\tb)}\right)_{0\leq a\leq \Im(\zb)}$ the \emph{locally largest fragment}.
\end{Prop}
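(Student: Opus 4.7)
The plan is to construct the \emph{locally largest spine} as a nested family of sub-intervals $[i_-(a),i_+(a)]\subset[0,R(u)]$ indexed by $a\in[0,S^*)$, obtained inductively by keeping, at each merge level of $y$, the sub-excursion whose signed $x$-displacement is larger in absolute value. Concretely, set $[i_-(0),i_+(0)]:=[0,R(u)]$; the spine fragment changes only at the countably many local minima $s_k^*$ of $y$ (with value $a_k^*$) lying inside the current fragment, and at each such merge the fragment splits into two sub-fragments with signed $x$-displacements $\Delta_k^1,\Delta_k^2$. I declare the next spine fragment to be the one of strictly larger absolute displacement; the whole selection is unambiguous provided no merge produces a tie $|\Delta_k^1|=|\Delta_k^2|$, and the required $\tb$ will then emerge as the unique limit point of the spine as $a\uparrow S^*$, with $\zb=u(\tb)$ and $\Im(\zb)=S^*$.

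The main step, and the one I expect to be the main obstacle, is to prove $\n_+$-almost surely that no such tie occurs. A union bound over the countable family of merges reduces the task to a single merge $k$. At the merge at $(s_k^*,a_k^*)$, the two sub-fragments of $u$ are the two excursions of $y$ above $a_k^*$ flanking $s_k^*$. Combining the Markov property of Lemma~\ref{Markov under n} with a time reversal at $s_k^*$ -- or equivalently, using Bismut's description (Proposition~\ref{Bismut}) at the local minimum -- one obtains that conditionally on the trace of $u$ outside these two excursions, $\Delta_k^1$ and $\Delta_k^2$ are independent, each arising as the $x$-coordinate of a linear Brownian motion killed when $y$ returns to $a_k^*$. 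They thus have continuous (Cauchy-type) distributions, in the very same spirit as the proof of Proposition~\ref{loop}, so $\Pb(|\Delta_k^1|=|\Delta_k^2|)=0$.

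Granted the absence of ties, set $S^*:=\sup\{a:\text{spine fragment nonempty}\}$. The nested intervals $[i_-(a),i_+(a)]$ satisfy $i_-(a)\uparrow i_-^*$ and $i_+(a)\downarrow i_+^*$ as $a\uparrow S^*$ with $i_-^*\leq i_+^*$, and continuity of $y$ gives $y(i_\pm^*)=S^*$. A strict inequality $i_-^*<i_+^*$ would force $y\geq S^*$ on $[i_-^*,i_+^*]$ together with $\max_{[i_-^*,i_+^*]}y=S^*$ (passing to the limit in $\max_{[i_-(a),i_+(a)]}y$), hence $y\equiv S^*$ on $[i_-^*,i_+^*]$ -- impossible $\n_+$-a.s.\ for a Brownian excursion. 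Therefore $i_-^*=i_+^*=:\tb$ and setting $\zb=u(\tb)$ we obtain $\Im(\zb)=S^*$. Since $\tb$ lies in every spine fragment below $S^*$, the defining inequality of $S(\tb)$ holds at every merge $a'\leq S^*$, yielding $S(\tb)=S^*=y(\tb)$. Conversely, for any $t'\neq\tb$, let $a_0:=\inf_{s\in[\min(t',\tb),\max(t',\tb)]}y(s)$; the process $y$ being $\n_+$-a.s.\ non-constant on any non-degenerate interval, $a_0<S^*$, and at the merge at level $a_0$ the spine chose $\tb$'s side, placing $t'$ in the strictly smaller sub-fragment. This is the lowest level at which the defining inequality of $S(t')$ fails, so $S(t')=a_0<S^*=S(\tb)$, whence uniqueness of $\tb$ and the identity $S=S(\tb)=\Im(\zb)$.
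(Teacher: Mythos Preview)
Your approach is genuinely different from the paper's. The paper argues by compactness: it extracts a limit point $\tb$ of a maximizing sequence for $S(\cdot)$, then shows $S(\tb)=S$ and $y(\tb)=S$ by a contradiction argument that hinges on proving the set $A(t)=\{a\le y(t):\forall a'\le a,\,|F^{(t)}(a')|\ge|F^{(t)}(a'^-)-F^{(t)}(a')|\}$ is \emph{open} in $[0,y(t)]$ (this openness uses the no-loop property, Proposition~\ref{loop}). You instead build the spine constructively from the top down and read off $\tb$ as the limit of nested intervals. Your route is more intuitive and makes the role of ``no ties'' very transparent; the paper's route avoids having to justify that the transfinite selection is well-posed.

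Two points need attention. First, the ``inductive'' language is misleading: the merge levels (values of local minima of $y$) are not well-ordered, so one cannot literally iterate. This is harmless --- you can define the spine fragment at level $a$ directly as the unique excursion above $a$ that wins every comparison at merges below $a$ (equivalently $\{t:S(t)>a\}$), well-defined by your no-tie step --- but it should be said.

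Second, and this is the real gap: your argument that $i_-^*=i_+^*$ does not go through as written. You claim $\max_{[i_-^*,i_+^*]}y=S^*$ by ``passing to the limit in $\max_{[i_-(a),i_+(a)]}y$''. The limit of $\max_{[i_-(a),i_+(a)]}y$ as $a\uparrow S^*$ is indeed $\max_{[i_-^*,i_+^*]}y$ (nested compacts), but nothing forces this limit to equal $S^*$: the spine follows the larger $x$-displacement, not the larger $y$-height, so it may well discard the global maximum of $y$ early on while $\max_{[i_-(a),i_+(a)]}y$ stays strictly above $S^*$. The correct argument is the one the paper uses implicitly through openness of $A(t)$: if $i_-^*<i_+^*$ and $y(t)>S^*$ for some $t\in(i_-^*,i_+^*)$, then $t$ lies in every spine fragment below $S^*$, hence $S(t)\ge S^*$; since $e_{S^*}^{(t)}$ has nonzero size by Proposition~\ref{loop}, right-continuity of $F^{(t)}$ gives $S(t)>S^*$, so the spine is nonempty strictly above $S^*$, contradicting the definition of $S^*$. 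In other words, you need the no-loop input here (not just the no-tie input) to rule out a nondegenerate limit interval.
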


Thus $\Xi$ is the length of the excursion which is locally the largest, meaning that at any level $a$ where the locally largest excursion splits, $\Xi$ is larger (in absolute value) than the length of the other excursion. See Figure \ref{locally largest} for a picture of $\zb$. Following \cite{BBCK}, we will see it as the Eve particle of our growth-fragmentation process.

\begin{figure} 
\begin{center}
\includegraphics[scale=0.75]{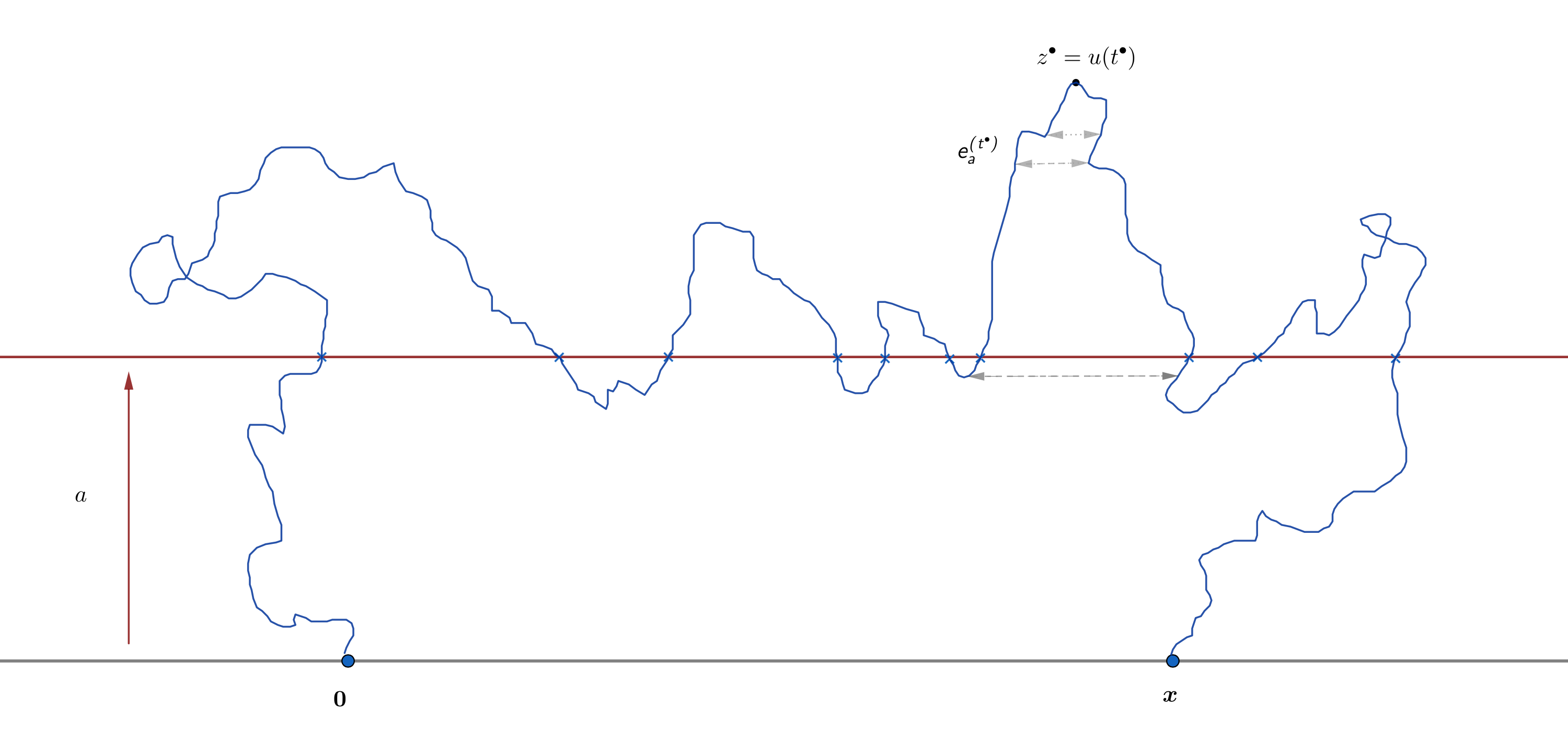}
\end{center}
\caption{The locally largest excursion.}
\label{locally largest}
\end{figure}

\begin{proof} 
\emph{Existence.} We deal with the excursions $u$ satisfying the following properties, which happen $\n_+$-almost everywhere : $u$ has no loop above any horizontal level (see Proposition \ref{loop}) and $y$ has distinct local minima. Take a convergent sequence $(t_n, n\ge 1)$ such that $S(t_n)$ converges to $S$, and denote by $\tb$ the limit of $t_n$. We have necessarily, by definition of $S(t)$, that $y(t_n)\geq S(t_n)$. By continuity of $y$, we get that $y(\tb) \ge S$. Take $a<S$. For $n$ large enough, since $a<y(\tb)$, we observe that $t_n$ and $\tb$ are in the same excursion above $a$, i.e. $e_a^{(\tb)}=e_a^{(t_n)}$. For such $n$, $F^{(t_n)}(a')=F^{(\tb)}(a')$ for all $a'\leq a$. Moreover, for $n$ large enough, $S(t_n)>a$ hence for all $a'\le a$, $\big|F^{(\tb)}(a')\big|=\big| F^{(t_n)}(a') \big| \geq \big|F^{(t_n)}(a'^-)-F^{(t_n)}(a')\big|=\big|F^{(\tb)}(a'^-)-F^{(\tb)}(a')\big|$. It implies that $S(\tb)\geq a$, hence $S(\tb)\geq S$ by taking $a$ arbitrarily close to $S$. We found $\tb$ such that $S(\tb)=S$.

We show that $y(\tb)=S$.  Notice that, for all $0\leq t\leq R(u)$, by right-continuity of $F^{(t)}$, the set
\[
A(t):=
\left\{
    0\leq a\leq y(t), \quad \forall \, 0\leq a'\leq a, \; \big|F^{(t)}(a')\big| \geq \big|F^{(t)}(a'^-)-F^{(t)}(a')\big|
\right\}
\]
is open in $[0,y(t)]$. Indeed, for $a<y(t)$, $e_a^{(t)}$ cannot be an excursion with size $0$ by assumption, and so by right-continuity, we can take  $\delta>0$ such that on $[a,a+\delta]$, $F^{(t)}$ takes values in $\left(\frac34 F^{(t)}(a), \frac32 F^{(t)}(a)\right)$ (in the case $F^{(t)}(a)>0$, without loss of generality). For such a $\delta$, and for any $a'\in[a,a+\delta]$, $F^{(t)}(a')>\frac34 F^{(t)}(a) > \frac34 \frac23 F^{(t)}(a'^-) =\frac12 F^{(t)}(a'^-)$, and $F^{(t)}(a'^-)\ge 0$. These two inequalities imply that $|F^{(t)}(a')| \geq |F^{(t)}(a'^-)-F^{(t)}(a')|$.

Now suppose that $S<y(\tb)$ and let us find a contradiction. We have $A(\tb)=[0,S)$, hence $ |F^{(\tb)}(S)| < |F^{(\tb)}(S^-)-F^{(\tb)}(S)| $. Write  $e_a^{(\tb)}=u_I$ with $I=(i_{a,-},i_{a,+})$, so that $F^{(\tb)}(a)=x(i_{a,+})-x(i_{a,-})$. Since $F^{(\tb)}$ jumps at $S$, either $i_{\cdot,-}$ or $i_{\cdot,+}$ jumps at $S$. Both cases cannot happen at the same time because local minima of $y$ are all distinct. Suppose  for example that $i_{S^-,-}<i_{S,-}$. Take $t\in (i_{S^-,-},i_{S,-})$ (see Figure \ref{locally largest proof}). We have $F^{(t)}(a)=F^{(\tb)}(a)$ for all $a<S$ and 
\begin{align*}
    F^{(t)}(S)
&=
    x(i_{S,-})-x(i_{S^-,-})= x(i_{S^-,+})-x(i_{S^-,-})-( x(i_{S,+})-x(i_{S,-})) \\
&=
     F^{(\tb)}(S^-) - F^{(\tb)}(S) \\
     &= F^{(t)}(S^-) - F^{(\tb)}(S). 
\end{align*}

\begin{figure} 
\begin{center}
\includegraphics[scale=0.75]{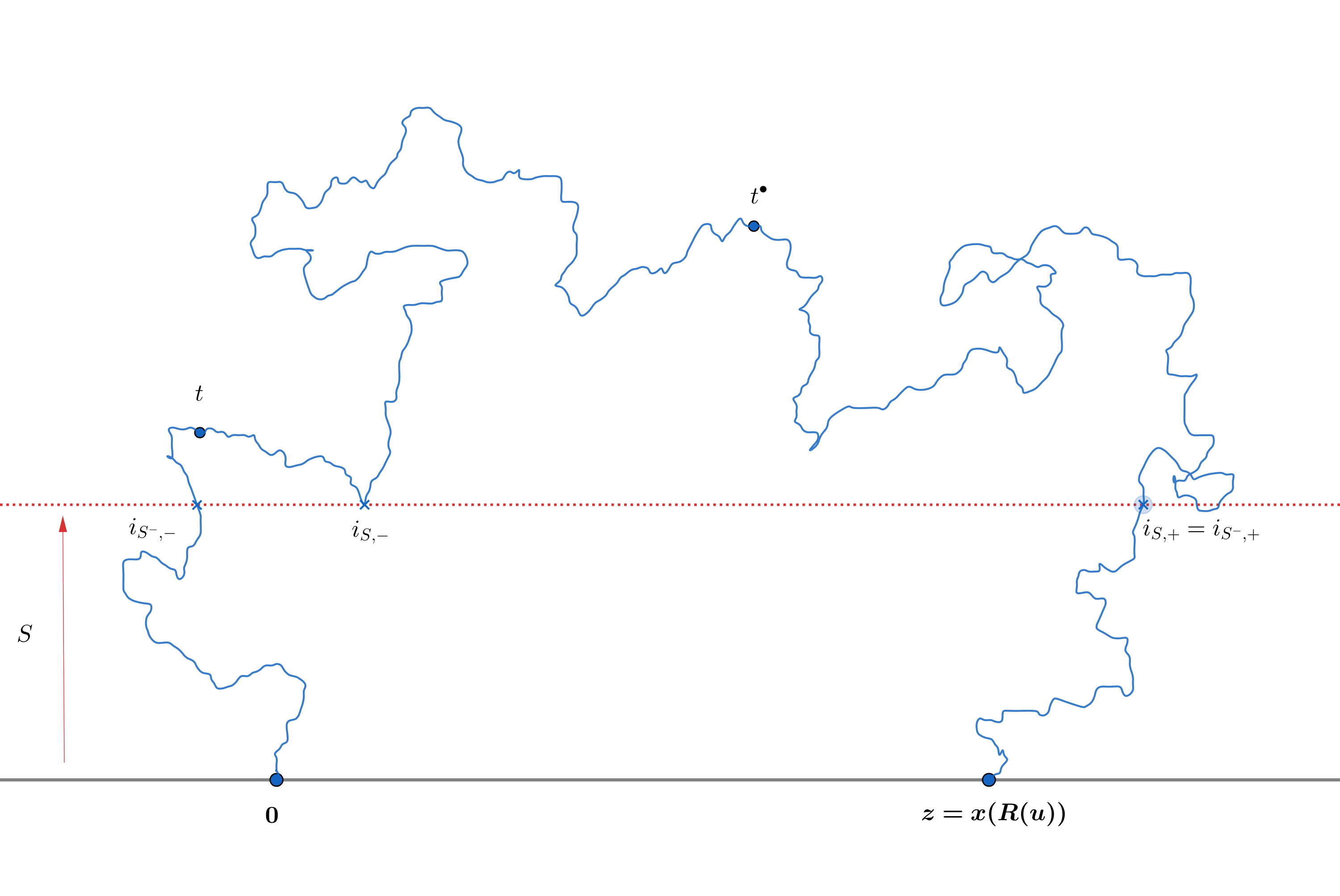}
\end{center}
\caption{Construction of the locally largest excursion.}
\label{locally largest proof}
\end{figure}

We deduce that $|F^{(t)}(S)|=|F^{(\tb)}(S^-)-F^{(\tb)}(S)| >|F^{(\tb)}(S)|=|F^{(t)}(S^-) - F^{(t)}(S) |$. Then $A(t)$ is open in $[0,y(t)]$, contains $S$, and we have $y(t)>S$.  Hence $\sup A(t)>S $ which gives the desired contradiction.

\emph{Uniqueness.} Suppose that $S(t)=S(t')=S$ with $t<t'$ and let us find again a contradiction. We showed that necessarily, $y(t)=y(t')=S$. Let $t_m \in[t,t']$ such that $y(t_m) = \min\left\{ y(r),\, r\in [t,t']   \right\}$. Set $a_m:=y(t_m)$. Observe that $t$ and $t'$ cannot be starting times or ending times of an excursion of $y$ (otherwise we could have extended the locally largest fragment inside this excursion for some positive height). Hence $a_m< S$. At level $a_m$, there must be a splitting into two excursions (one straddling time $t$, the other $t'$) with equal size. It happens on a negligible set under $\n_+$. To see it, we can restrict to $t<t'$ rationals and use the Markov property at time $t'$. 
\end{proof}


\subsection{Disintegration of It\^o's measure over the size of the excursions}
We are interested in conditioning It\^o's measure of excursions in $\Hb$ on their initial size, \emph{i.e.} in fixing the value of $x(R(u))=z$. This will allow us to define probability measures $\gamma_z$ which disintegrate $\n_+$ over the value of the endpoint $z$. Properties will simply transfer from $\n_+$ to $\gamma_z$ \emph{via} the disintegration formula. Define $\Pbb{r}{a}{b}$ as the law of the one-dimensional Brownian bridge of length $r$ between $a$ and $b$, and $\Pi_r$ as the law of a three-dimensional Bessel ($BES^3$) bridge of length $r$ from $0$ to $0$.  

\begin{Prop} \label{disintegration prop}
We have the following disintegration formula
\begin{equation} \label{disintegration}
    \n_+ = \int_{\R} \frac{\mathrm{d}z}{2\pi z^2} \, \gamma_z,
\end{equation}
where for $z\ne 0$,
\begin{equation} \label{gamma tilde}
    \gamma_z = \int_{\R_+} \mathrm{d}v \, \frac{e^{-1/2v}}{2v^2} \,  \Pbb{vz^2}{0}{z}\otimes \Pi_{vz^2}. 
\end{equation}
\end{Prop}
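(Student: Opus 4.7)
The plan is to combine two classical disintegrations---Williams' description of the one-dimensional It\^o measure $n_+$ via the excursion lifetime, and the decomposition of a linear Brownian motion on $[0,r]$ as a Brownian bridge with Gaussian endpoint---and then to perform the change of variable $r = v z^2$ to put the formula in the stated form.

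Starting from the identity $\n_+(\mathrm{d}x, \mathrm{d}y) = n_+(\mathrm{d}y)\, \Pb(X^{R(y)} \in \mathrm{d}x)$ recalled earlier, I would rewrite each factor separately. Williams' decomposition (see Theorem XII.4.2 in \cite{RY}) gives
\[
n_+(\mathrm{d}y) = \int_0^\infty \frac{\mathrm{d}r}{2\sqrt{2\pi r^3}} \, \Pi_r(\mathrm{d}y),
\]
that is, under $n_+$ the lifetime $R$ has density $\frac{1}{2\sqrt{2\pi r^3}}$, and conditionally on $R=r$ the path is a three-dimensional Bessel bridge of length $r$ from $0$ to $0$. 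Conditioning a Brownian path of length $r$ on its terminal value gives
\[
\Pb(X^r \in \mathrm{d}x) = \int_\R \frac{\mathrm{d}z}{\sqrt{2\pi r}} \, e^{-z^2/(2r)} \, \Pbb{r}{0}{z}(\mathrm{d}x).
\]
Plugging both expressions into $\n_+$ and using Fubini gives
\[
\n_+(\mathrm{d}x, \mathrm{d}y) = \int_\R \int_0^\infty \frac{\mathrm{d}r\, \mathrm{d}z}{4\pi r^2} \, e^{-z^2/(2r)} \, \Pbb{r}{0}{z}(\mathrm{d}x) \otimes \Pi_r(\mathrm{d}y).
\]

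The last step is the change of variable $r = v z^2$ for $z \ne 0$: then $\mathrm{d}r = z^2\, \mathrm{d}v$ and $e^{-z^2/(2r)} = e^{-1/(2v)}$, so
\[
\frac{\mathrm{d}r}{4\pi r^2}\, e^{-z^2/(2r)} = \frac{\mathrm{d}v}{4\pi v^2 z^2} \, e^{-1/(2v)} = \frac{1}{2\pi z^2} \cdot \frac{e^{-1/(2v)}}{2v^2} \mathrm{d}v.
\]
Inserting this in the integral and recognizing the inner part as $\gamma_z$ yields the announced identity. That $\gamma_z$ is in fact a probability measure follows from the substitution $w = 1/(2v)$, which gives $\int_0^\infty \frac{e^{-1/(2v)}}{2v^2} \mathrm{d}v = \int_0^\infty e^{-w} \mathrm{d}w = 1$.

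The computation is essentially a tower of conditionings and there is no real conceptual obstacle. The one place where care is needed is the normalization constant in Williams' decomposition of $n_+$: a misplaced factor of $2$ there would shift all the constants in the final formula.
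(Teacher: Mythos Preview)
Your proof is correct and follows essentially the same route as the paper: disintegrate $n_+$ over the lifetime via It\^o's description (what you call Williams' decomposition, Theorem XII.4.2 in \cite{RY}), condition the independent Brownian motion on its endpoint, and then change variables $r=vz^2$. The paper carries this out with test functions $f,g$ rather than at the level of measures, but the computation is identical; your extra check that $\gamma_z$ has total mass $1$ is a small bonus.
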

\begin{proof}
Let $f$ and $g$ be two nonnegative measurable functions defined on $\mathscr{X}$ and $\mathscr{X}_0$ respectively. Thanks to It\^o's description of $n_+$ (see \cite{RY}, Chap. XII, Theorem 4.2), we have
\begin{align*}
    \int_{U} f(x)g(y) \, \n_+(\mathrm{d}x,\mathrm{d}y) &= \int_{U} f(x)g(y) \, n_+(\mathrm{d}y) \Pb\left(X^{R(y)}\in \mathrm{d}x\right) \\
    &=  \int_{\R_+} \frac{\mathrm{d}r}{2\sqrt{2\pi r^3}} \int_{\mathscr{X}} f(x) \, \Pi_r[g] \,\Pb\left(X^{r}\in \mathrm{d}x\right).
\end{align*}
Now, decomposing on the value of the Gaussian r.v. $X_r$ yields 
\[\int_{U} f(x)g(y) \, \n_+(\mathrm{d}x,\mathrm{d}y) = \int_{\R_+} \frac{\mathrm{d}r}{2\sqrt{2\pi r^3}} \int_{\R} \mathrm{d}z \frac{e^{-z^2/2r}}{\sqrt{2\pi r}}  \Pi_r[g] \, \Ebb{r}{0}{z} \left[f\right].\]
We finally perform the change of variables $v(r)=r/z^2$ to get
\[\int_{U} f(x)g(y) \, \n_+(\mathrm{d}x,\mathrm{d}y) = \int_{\R} \frac{\mathrm{d}z}{2\pi z^2} \int_{\R_+} \mathrm{d}v \, \frac{e^{-1/2v}}{2v^2}  \Ebb{vz^2}{0}{z} \left[f\right] \, \Pi_{vz^2}[g].\]
\end{proof}

\begin{Lem}\label{l:scaling}
Let $z$ be a nonzero real number. The image measure of $\gamma_z$ by the function which sends $(x,y)$ to
\[
 \left( \frac{x(tz^2)}{z}, \frac{y(t z^2)}{|z|} \right), \quad 0\le t\le \frac{R(u)}{z^2},
\]
is $\gamma_1$. 
\end{Lem}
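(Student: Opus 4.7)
The plan is to apply the explicit disintegration formula \eqref{gamma tilde} together with the scaling invariance of Brownian bridges and $BES^3$ bridges. Fix $z\ne 0$ and denote by $\Phi_z$ the map sending $(x,y)$ to $(x(tz^2)/z,\, y(tz^2)/|z|)_{0\le t\le R(u)/z^2}$. Under $\gamma_z$, the length variable satisfies $R(u)=vz^2$, so after the time rescaling the new time parameter runs over $[0,v]$. It suffices to prove that the pushforward by $\Phi_z$ of the product $\Pbb{vz^2}{0}{z}\otimes \Pi_{vz^2}$ is equal to $\Pbb{v}{0}{1}\otimes \Pi_v$; integrating against the weight $e^{-1/(2v)}/(2v^2)\,\mathrm{d}v$ then yields $\gamma_1$ directly from \eqref{gamma tilde} applied with $z=1$.

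For the first coordinate, I would use the classical representation of a Brownian bridge $x\sim \Pbb{vz^2}{0}{z}$ as $x(s)=W_s-(s/(vz^2))(W_{vz^2}-z)$ for some standard Brownian motion $W$. A direct computation then yields
\[
\tilde{x}(t):=\frac{x(tz^2)}{z}=\widetilde{W}_t-\frac{t}{v}\bigl(\widetilde{W}_v-1\bigr),\quad 0\le t\le v,
\]
where $\widetilde{W}_t:=W_{tz^2}/z$. By Brownian scaling when $z>0$, and by combining Brownian scaling with the symmetry $W\stackrel{d}{=}-W$ when $z<0$, the process $\widetilde{W}$ is again a standard Brownian motion, hence $\tilde{x}\sim \Pbb{v}{0}{1}$.

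For the second coordinate, I would invoke the fact that a $BES^3$ process is the Euclidean norm of a three-dimensional Brownian motion, which obeys Brownian scaling. Consequently, if $y\sim \Pi_{vz^2}$, the rescaled process $\tilde{y}(t):=y(tz^2)/|z|$ is a $BES^3$ bridge of length $v$ from $0$ to $0$, that is, $\tilde{y}\sim \Pi_v$. Independence of the two coordinates is obviously preserved by $\Phi_z$, so the pushforward of the product measure is indeed $\Pbb{v}{0}{1}\otimes \Pi_v$, and the lemma follows by integrating in $v$.

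There is no serious obstacle: everything reduces to standard scaling relations for Brownian and Bessel bridges. The one subtlety worth flagging is the sign convention in the case $z<0$: dividing the first coordinate by $z$ (and not by $|z|$) is precisely what is needed to map a bridge ending at $z$ onto a bridge ending at $+1$ rather than $-1$, which is why the two coordinates are normalized differently in the statement.
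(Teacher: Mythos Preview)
Your proof is correct and follows exactly the approach of the paper, which simply states that the lemma ``comes from the definition of $\gamma_z$ and the scaling property of $BES^3$ bridge and Brownian bridge.'' You have merely supplied the details of this one-line argument, including the careful handling of the sign when $z<0$.
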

\begin{proof}
It comes from the definition of $\gamma_z$ and the scaling property of $BES^3$ bridge and Brownian bridge.
\end{proof}

\subsection{The metric space of excursions in $\Hb$} \label{Sec:metric space}
Very often, results under $\gamma_z$ can be obtained by proving the analog under the It\^o's measure $\n_+$, and then disintegrating over $z=x(R(u))$. This usually provides results under $\gamma_z$ for Lebesgue-almost every $z>0$, and so we would like to study the continuity of $z\mapsto \gamma_z$. This requires to define a topology on the space of excursions $U^+$. All these results will be stated for $z>0$ because the scaling depends on the sign of the endpoint (Lemma \ref{l:scaling}), but they all extend to the general case.

We therefore introduce the usual distance 
\[d(u,v) = |R(u)-R(v)| + \sup_{t\ge 0} |u(t\wedge R(u))-v(t\wedge R(v))|,\]
where we identified $\delta$ with the excursion with lifetime $0$. The distance $d$ makes $U^+$ into a Polish space. The following lemmas may come in useful. 

\begin{Lem} \label{lem:continuity delta}
The map $\Delta:u\in U^+ \mapsto \Delta u = x(R(u))$ is continuous. 
\end{Lem}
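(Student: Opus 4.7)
The plan is to show that the map $\Delta$ is in fact $1$-Lipschitz (in the sense of being bounded by $d$), which will of course imply continuity. The key observation is that $\Delta u = x(R(u))$ is the first coordinate of the \emph{endpoint} $u(R(u))$, and the distance $d$ is tailor-made to control the endpoints by taking $t$ large inside the supremum.

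More precisely, given $u,v \in U^+$, I would take any $t \ge \max(R(u), R(v))$ in the definition of $d(u,v)$. For such $t$ we have $t \wedge R(u) = R(u)$ and $t \wedge R(v) = R(v)$, hence
\[
|u(R(u)) - v(R(v))| \le \sup_{t \ge 0} |u(t\wedge R(u)) - v(t\wedge R(v))| \le d(u,v).
\]
Since the first coordinate is $1$-Lipschitz for the Euclidean norm on $\R^2$, this gives
\[
|\Delta u - \Delta v| = |x_u(R(u)) - x_v(R(v))| \le |u(R(u)) - v(R(v))| \le d(u,v),
\]
which proves continuity (and more). I don't foresee any real obstacle here: the proof is a direct consequence of the way the distance $d$ is defined, since the sup that defines $d$ is taken over all $t \ge 0$ (not just $t$ up to the common lifetime), so it automatically compares the endpoints of $u$ and $v$ even when $R(u) \neq R(v)$.
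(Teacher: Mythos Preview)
Your proof is correct and essentially identical to the paper's: both bound $|\Delta u - \Delta v|$ by $|u(R(u))-v(R(v))|$ and then by $d(u,v)$ via the supremum at large $t$. The only cosmetic difference is that the paper writes $|x(R(u))-x'(R(u'))| = |u(R(u))-u'(R(u'))|$ as an equality (since the imaginary parts vanish at the endpoints in $U^+$), whereas you use the weaker inequality coming from the first-coordinate projection.
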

\begin{proof}
This is straightforward since $|x(R(u))-x'(R(u'))| = |u(R(u))-u'(R(u'))| \le d(u,u')$ for $u=(x,y)$ and $u'=(x',y')$.
\end{proof}
\begin{Lem} \label{lem:continuity}
Let $u\in U^+$. Then $z\in\R^*_+\mapsto u^{(z)}:=z u(\cdot/z^2) = \left(z u(t/z^2), 0\le t\le R(u)z^2\right)$ is a continuous function.
\end{Lem}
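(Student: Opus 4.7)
The plan is to reduce the claim to two familiar facts about $u$ seen as a continuous function on the compact interval $[0,R(u)]$: it is bounded, and it is uniformly continuous. Given these, both sources of perturbation when $z$ varies, namely the multiplicative factor $z$ in front of $u$ and the time change $1/z^2$, can be controlled uniformly in the time parameter $t$ appearing in the definition of $d$.

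First I would treat the lifetime contribution to $d(u^{(z)},u^{(z')})$: since $R(u^{(z)}) = R(u) z^2$, this is clearly continuous in $z$, so $|R(u^{(z)})-R(u^{(z')})|\to 0$ as $z'\to z$. For the supremum contribution, write $R:=R(u)$, $s_z(t) := (t/z^2)\wedge R$ and, for any $t\ge 0$, use $u^{(z)}(t\wedge R z^2) = z\, u(s_z(t))$, so that
\[
\bigl|u^{(z)}(t\wedge Rz^2) - u^{(z')}(t\wedge Rz'^2)\bigr|
\;\le\;
|z-z'|\cdot \|u\|_\infty + z'\cdot \bigl|u(s_z(t)) - u(s_{z'}(t))\bigr|,
\]
where $\|u\|_\infty := \sup_{s\in [0,R]} |u(s)| < \infty$ by continuity of $u$ on $[0,R]$. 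The first term is uniform in $t$ and vanishes as $z'\to z$.

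For the second term, I would show that $\sup_{t\ge 0}|s_z(t)-s_{z'}(t)| \to 0$ as $z'\to z$, then invoke uniform continuity of $u$ on $[0,R]$ to conclude. To bound this supremum, split according to whether $t\le R\min(z^2,z'^2)$, $t\in [R\min(z^2,z'^2), R\max(z^2,z'^2)]$, or $t\ge R\max(z^2,z'^2)$. On the first interval the difference equals $t\,|z^{-2}-z'^{-2}|\le R\min(z^2,z'^2)\,|z^{-2}-z'^{-2}|$; on the third it is zero; on the middle interval one of $s_z(t),s_{z'}(t)$ equals $R$ and the other lies in $[R\min(z^2,z'^2)/\max(z^2,z'^2),R]$, yielding a bound $R\,(1-\min(z^2,z'^2)/\max(z^2,z'^2))$. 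All three bounds tend to $0$ as $z'\to z$.

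There is no real obstacle here; the only point to be careful about is the "capping at $R$" inherent in the definition of $d$, which I handle explicitly via the case analysis above. Once uniform smallness of $|s_z(t)-s_{z'}(t)|$ is established, uniform continuity of $u$ on $[0,R]$ gives the required uniform smallness of the second term, and combining with the first term and the lifetime term proves $d(u^{(z)},u^{(z')})\to 0$, which is the desired continuity.
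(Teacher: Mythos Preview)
Your proof is correct and follows essentially the same approach as the paper: both split $d(u^{(z)},u^{(z')})$ into the lifetime term and the supremum term, bound the latter via the triangle inequality as $|z-z'|\cdot\|u\|_\infty$ plus a term controlled by uniform continuity of $u$ on $[0,R(u)]$. The only difference is that you spell out explicitly why $\sup_{t\ge 0}|s_z(t)-s_{z'}(t)|\to 0$ via a three-case analysis, whereas the paper simply invokes uniform continuity at that point without the intermediate step.
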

\begin{proof}
Let $z_0>0$. Then for all $z>0$
\[ d(u^{(z)}, u^{(z_0)}) = R(u)|z^2-z_0^2| + \sup_{t\ge 0} \left|zu\left(\frac{t}{z^2}\wedge R(u)\right)-z_0u\left(\frac{t}{z_0^2}\wedge R(u)\right)\right|. \]
The second term is  
\begin{align*} 
&\sup_{t\ge 0} \left|zu\left(\frac{t}{z^2}\wedge R(u)\right)-z_0u\left(\frac{t}{z_0^2}\wedge R(u)\right)\right| \\
&\le z \sup_{t\ge 0} \left|u\left(\frac{t}{z^2}\wedge R(u)\right)-u\left(\frac{t}{z_0^2}\wedge R(u)\right)\right| + \sup_{t\ge 0} \left| (z-z_0) u\left(\frac{t}{z^2}\wedge R(u)\right)\right| \\
&\le z \sup_{t\ge 0} \left|u\left(\frac{t}{z^2}\wedge R(u)\right)-u\left(\frac{t}{z_0^2}\wedge R(u)\right)\right| + |z-z_0| \sup_{t\ge 0} |u(t)|.
\end{align*}
We conclude by using the uniform continuity of $u$.
\end{proof}

\bigskip

If we equip the set $\mathcal{P}(U^+)$ of probability measures on $U^+$ with the topology of weak convergence, we have the following result.
\begin{Prop} \label{prop: continuity gamma}
The map $z\in \R_+^*\mapsto \gamma_z$ is continuous.
\end{Prop}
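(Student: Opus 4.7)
The plan is to reduce continuity of $z\mapsto \gamma_z$ to continuity of a scaling map at the level of individual excursions, using Lemma \ref{l:scaling} to realize all the $\gamma_z$ as pushforwards of the single reference measure $\gamma_1$.

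More precisely, I would fix $u\in U^+$ distributed according to $\gamma_1$ and consider the map $\Phi_z\colon u\mapsto u^{(z)}=\bigl(zu(t/z^2),\,0\le t\le R(u)z^2\bigr)$ introduced in Lemma \ref{lem:continuity}. The scaling statement of Lemma \ref{l:scaling} (read in the reverse direction: starting from $\gamma_1$ and rescaling by a factor $z$) tells us that $\Phi_z(u)$ has law $\gamma_z$ for every $z>0$. So for any bounded continuous functional $F:U^+\to \R$, we can write
\[
\int_{U^+} F\,\mathrm{d}\gamma_z \;=\; \Eb\bigl[F(\Phi_z(u))\bigr], \qquad u\sim\gamma_1.
\]

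Now Lemma \ref{lem:continuity} asserts that for each fixed excursion $u$, the map $z\mapsto \Phi_z(u)$ is continuous from $\R_+^*$ to $(U^+,d)$. Composing with $F$, the function $z\mapsto F(\Phi_z(u))$ is therefore continuous on $\R_+^*$, and it is bounded by $\|F\|_\infty$ uniformly in $z$ and $u$. Dominated convergence then gives that $z\mapsto \Eb[F(\Phi_z(u))]$ is continuous, which is exactly the continuity of $z\mapsto \gamma_z$ in the weak topology. The case $z<0$ follows by the same argument after adjusting for the sign, using that the scaling in Lemma \ref{l:scaling} involves $|z|$ on the imaginary part; since the statement is for $z\in\R_+^*$ (as remarked at the beginning of Section \ref{Sec:metric space}), this is not needed here.

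I do not expect a serious obstacle: the two technical lemmas \ref{l:scaling} and \ref{lem:continuity} have been set up precisely for this argument. The only minor point to keep in mind is that $\gamma_1$ may not be a probability measure (its total mass involves the $\frac{e^{-1/2v}}{2v^2}$ factor), but since we only use it to take expectations of \emph{bounded} continuous functionals, dominated convergence applies without change. Thus the proof amounts to combining the two preceding lemmas with the bounded convergence theorem.
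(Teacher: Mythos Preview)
Your proof is correct and follows exactly the same approach as the paper: write $\gamma_z(G)=\gamma_1[G(u^{(z)})]$ via Lemma~\ref{l:scaling}, then apply Lemma~\ref{lem:continuity} and dominated convergence. One minor remark: $\gamma_1$ is in fact a probability measure (the density $\tfrac{e^{-1/2v}}{2v^2}$ integrates to $1$ on $\R_+$), so your caveat is unnecessary, though harmless.
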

\begin{proof}
Let $G$ be a continuous bounded function on $U^+$. Then by scaling (Lemma \ref{l:scaling}), for all $z>0$,
\[\gamma_z(G) = \gamma_1 \left[ G(u^{(z)})\right]. \]
Applying Lemma \ref{lem:continuity} together with the dominated convergence theorem yields the desired result.
\end{proof}

\bigskip

Also, we will use the continuity of the excursions cut at horizontal levels. 
Recall from Section \ref{excursion level} that ${\mathcal I}(a)$ is the set of times when the excursion $u\in U^+$ lies above $a$, and for each  connected component $I$ of ${\mathcal I}(a)$, $u_I$ denotes the associated excursion above $a$. The path $u_I$ is an excursion above $a$, $I$ is the \emph{time interval} of $u_I$, and the \emph{size} or \emph{length} of $u_I$ is the difference between its endpoint and its starting point.

On $\{T_a<\infty\}$, we rank the excursions above $a$ according to the absolute value of their \emph{size}.  Write $z^{a,+}_1 = z^{a,+}_1(u), z^{a,+}_2 = z^{a,+}_2(u), \ldots$ for the sizes, ranked in descending order of their absolute value, and $e^{a,+}_1 = e^{a,+}_1(u), e^{a,+}_2 = e^{a,+}_2(u), \ldots$ for the corresponding excursions. This is possible since for any fixed $\varepsilon>0$ there are only finitely many excursions with length larger than $\varepsilon$ in absolute value. 

\begin{Prop} \label{prop:continuity excursions}
Let $a>0$ and $z>0$. For any $i\ge 1$, the function $e^{a,+}_i$ is continuous on $U^+$ on the event $\{T_a<\infty\}$ outside a $\gamma_z$-negligible set. 
\end{Prop}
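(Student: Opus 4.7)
The plan is to identify a $\gamma_z$-negligible exceptional set $B\subset U^+$ so that on the event $\{T_a<\infty\}$, each $e^{a,+}_i$ is continuous at every $u\in U^+\setminus B$. Call $u\in U^+$ \emph{good} if the following two conditions hold: (A) for every $t$ with $y(t)=a$, every neighborhood of $t$ contains both points where $y<a$ and points where $y>a$; (B) the sizes $(\Delta u_{I_k})_k$, indexed by the connected components $(I_k)_k$ of $\mathcal{I}(a)=\{y>a\}$, are pairwise distinct in absolute value and all nonzero. Let $B$ be the complement of the set of good excursions.

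For continuity at a good $u$ with $T_a(u)<\infty$, take $u^n=(x^n,y^n)\to u$ in the metric $d$ and fix a component $I=(i_-,i_+)$ of $\mathcal{I}(a)$. For any compact $K\subset I$, the uniform convergence $y^n\to y$ gives $y^n>a$ on $K$ for $n$ large, so $K$ lies in a unique component $I^n=(i^n_-,i^n_+)$ of $\{y^n>a\}$. Condition (A) at $i_\pm$ supplies, arbitrarily close to $i_\pm$, points where $y<a$, forcing $I^n\to I$ and hence $u^n_{I^n}\to u_I$ in $d$. The delicate point is that $u^n$ may develop spurious excursions above $a$ not coming from $u$. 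To control them, observe that $(x^n)$ is uniformly equicontinuous (by the uniform convergence $x^n\to x$ combined with the uniform continuity of $x$ on $[0,R(u)]$), so any excursion of $u^n$ above $a$ with absolute size at least $\eta$ must have duration at least some $\delta(\eta)>0$. A subsequence-extraction argument then yields a limiting interval $[j_-,j_+]$ of positive length with $y\ge a$ on it and $y(j_\pm)=a$; condition (A) forces $y>a$ on the open interior, so $(j_-,j_+)$ is a component of $\mathcal{I}(a)$. Combined with (B), this shows that for $n$ large the top $i$ excursions of $u^n$, ranked by absolute size, are precisely the $I^n$-perturbations of the top $i$ excursions of $u$, in the same order, giving $e^{a,+}_i(u^n)\to e^{a,+}_i(u)$ in $d$.

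To show $\gamma_z(B)=0$, I treat the two conditions separately using Proposition \ref{disintegration prop}. For (A): conditionally on the random duration, $y$ is a three-dimensional Bessel bridge from $0$ to $0$, whose law is locally absolutely continuous with respect to that of a Brownian motion away from $0$. By the classical oscillatory behaviour of Brownian motion at a fixed positive level (the local time at $a$ is strictly positive, and $\{t\,:\,y(t)=a\}$ is a perfect set), every neighborhood of each level-$a$ time contains points both above and below $a$, almost surely. For (B): conditionally on $y$, $x$ is a Brownian bridge from $0$ to $z$ of length $R(y)$, and the components $(I_k)$ are measurable functions of $y$. For any pair $k\ne k'$, the pair $(\Delta u_{I_k},\Delta u_{I_{k'}})$ is a linear functional of $x$ whose two coefficient vectors, namely indicators of the disjoint intervals $I_k,I_{k'}$ with a bridge-constraint correction, are non-proportional; the joint law is therefore a non-degenerate two-dimensional Gaussian, so $\Pb(|\Delta u_{I_k}|=|\Delta u_{I_{k'}}|)=0=\Pb(\Delta u_{I_k}=0)$. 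A countable union over pairs concludes. The main obstacle is the handling of the spurious small excursions in the continuity argument, which requires both the equicontinuity of $(x^n)$ and the full strength of (A); everything else reduces to standard fine properties of the Bessel bridge and Gaussian non-degeneracy.
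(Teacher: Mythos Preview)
Your argument is correct and follows the same route as the paper: isolate a full-measure set of good trajectories and prove continuity there by first showing that each excursion interval of $u$ has a unique nearby counterpart for $u^n$, then ruling out spurious large excursions of $u^n$ and deducing preservation of the size ranking (the paper carries out this last part via an explicit count $N_\eta$ of excursions of duration $>\eta$ rather than your subsequence--compactness argument, and it simply asserts the null-set conditions that you actually prove via the Bessel-bridge and Gaussian computations). One small imprecision worth fixing: the fact that $\{t:y(t)=a\}$ is perfect does not by itself force oscillation of $y$ around $a$ at every level-$a$ time (two nearby level-$a$ times could bracket an excursion entirely above $a$); the clean justification of your condition~(A) is that the set of local extremum \emph{values} of a Brownian path is countable, hence a fixed level $a$ is a.s.\ not among them, and this transfers to the Bessel bridge by absolute continuity.
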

\begin{proof}
We consider the set $\mathscr{E}$ of trajectories $u=(x,y)$ such that $T_a<\infty$ and satisfying the following conditions, which occur with $\gamma_z$-probability one when conditioned on touching $a$: the level $a$ is not a local minimum for $y$, there exist infinitely many excursions above $a$, all excursions touch $a$ only at their starting point and endpoint, the sizes $(z_i^{a,+},\, i\ge 1)$ of the excursions are all distinct. Let $i\ge 1$ and $u=(x,y)\in \mathscr{E}$. We want to show that $e_i^{(a,+)}$ is continuous at $u$. 

Let $t$ be a time in the excursion $e_i^{a,+}$, i.e. such that $y(t)>a$ and $e^{(t)}_a=e_i^{a,+}$. We restrict our attention to $u'=(x',y')\in \mathscr{E}$ close enough to $u$ so that $y'(t)>a$ and we will write $e_a'^{(t)}$ for the excursion of $u'$ corresponding to $t$. Let $\varepsilon>0$.
 
 \begin{itemize}
 \item \emph{First, we want to find $\delta>0$ such that, whenever $d(u,u')<\delta$, the durations of the excursions $e_a^{(t)}$ and $e_a'^{(t)}$ are close, namely $|R(e_a'^{(t)})-R(e^{(t)}_a)|<\varepsilon$.}
Write $(i_-(a), i_+(a))$, and $(i'_-(a), i'_+(a))$, for the excursion time intervals corresponding to $e_a^{(t)}$ and $e_a'^{(t)}$ respectively. For simplicity, we take the notation $R=R(e^{(t)}_a)$ and $R'=R(e'^{(t)}_a)$. Since $a$ is not a local minimum for $y$, there exist times $t_1 \in (i_-(a)-\frac{\varepsilon}{2},i_-(a))$  and $t_2 \in (i_+(a),i_+(a) + \frac{\varepsilon}{2})$ when $y$ is strictly below $a$. Take $\delta_1\in (0,a)$ such that $y(t_1) $ and $y(t_2)$ are in $(0,a-\delta_1)$. Let $u'=(x',y')\in \mathscr{E}$ such that $d(u,u')< \frac{\delta_1}{2}$. We deduce that $y'(t_1)< y(t_1)+ \frac{\delta_1}{2}<a$ and similarly $y'(t_2)<a$. This implies that $i'_-(a)\ge t_1>i_-(a)-\frac{\varepsilon}{2}$ and $i'_+(a)\le t_2<i_+(a)+\frac{\varepsilon}{2}$. Likewise, pick two times $t_3 \in (i_-(a),i_-(a) + {\varepsilon \over 2})$ and $t_4 \in (i_+(a)-\frac{\varepsilon}{2},i_+(a))$ such that $t_3<t<t_4$. Since the excursion $e_a^{(t)}$ touches level $a$ only at its extremities, the distance between the compact $u([t_3,t_4])$ and the closed set $\{\Im(z) = a\}$ is positive, and so, on the interval $[t_3,t_4]$, $y$ remains above, say, $a+\delta_2$ where $\delta_2>0$. 
Then when $d(u,u')<\frac{\delta_2}{2}$, the excursion $e_a'^{(t)}$ will satisfy $i'_-(a)<t_3< i_-(a)+\frac{\varepsilon}{2}$ and $i'_+(a)>t_4> i_+(a)-\frac{\varepsilon}{2}$. Therefore, when $d(u,u')<\delta=\min(\frac{\delta_1}{2},\frac{\delta_2}{2}) $, we get that $|i'_-(a)- i_-(a)|<\frac{\varepsilon}{2}$ and $|i'_+(a)- i_+(a)|<\frac{\varepsilon}{2}$, so in particular $|R'-R|<\varepsilon$. Observe that we not only proved that the durations are close, but also that the times $i_-, i'_-$ (and $i_+, i'_+$) are close, and this will be useful in the remainder of the proof. 

\item \emph{Secondly, we show that we can take $\delta'>0$ small enough so that \[\sup_{s\ge 0} |e_a^{(t)}(s\wedge R)-e_a'^{(t)}(s\wedge R')|<\varepsilon,\] whenever $d(u,u')<\delta'$.}

Take $\eta=\eta(\varepsilon) >0$ some modulus of uniform continuity of $u$ with respect to $\varepsilon$. The previous paragraph gives the existence of $\delta>0$ such that when $u'\in \mathscr{E}$ and $d(u,u')<\delta$, $|i'_-(a)- i_-(a)|<\eta/3$ and $|i'_+(a)- i_+(a)|<\eta/3$. Without loss of generality, we can assume that $\delta<\varepsilon$. Define $\delta'= \min(\delta,\eta)$, and let $u'\in \mathscr{E}$ such that $d(u,u')<\delta'$. For all $s\ge 0$, we have 
\begin{align}
&|e_a^{(t)}(s\wedge R)-e_a'^{(t)}(s\wedge R')|  \notag \\
&= \left|u(i_-(a)+(s\wedge R))-u(i_-(a))- u'(i'_-(a)+(s\wedge R'))+u'(i'_-(a))\right| \notag \\
&\le \left|u(i_-(a))-u'(i'_-(a))\right| + \left|u(i_-(a)+(s\wedge R))- u(i'_-(a)+(s\wedge R'))\right|. \label{eq:continuity}
\end{align}
Now, 
\[
\left|u(i_-(a))-u'(i'_-(a))\right| \le \left|u(i_-(a))-u(i'_-(a))\right| + \left|u(i'_-(a))-u'(i'_-(a))\right|, 
\]
and so by uniform continuity of $u$ and because $d(u,u')<\delta'<\varepsilon$, we obtain 
\begin{equation}
    \left|u(i_-(a))-u'(i'_-(a))\right| \le 2\varepsilon. \label{eq:continuity1}
\end{equation}
Similarly, the second term of (\ref{eq:continuity}) is 
\begin{align*}
  &\left|u(i_-(a)+(s\wedge R))- u'(i'_-(a)+(s\wedge R'))\right| \\
  &\le \left|u(i_-(a)+(s\wedge R))- u(i'_-(a)+(s\wedge R'))\right| \\
  & \quad + \left|u(i'_-(a)+(s\wedge R'))- u'(i'_-(a)+(s\wedge R'))\right|,
\end{align*}
and since $|i_-(a)+(s\wedge R)-i'_-(a)-(s\wedge R')|<\eta$, we can conclude in the same way that
\begin{equation}
    \left|u(i_-(a)+(s\wedge R))- u'(i'_-(a)+(s\wedge R'))\right|\le 2\varepsilon. \label{eq:continuity2}
\end{equation}
Inequalities (\ref{eq:continuity}), (\ref{eq:continuity1}) and (\ref{eq:continuity2}) give 
\[|e_a^{(t)}(s\wedge R)-e_a'^{(t)}(s\wedge R')|\le 4 \varepsilon,\]
which is the desired result.
\end{itemize}
So far, we proved that $e_a^{(t)}$ is continuous at $u$. To conclude, we need an argument to say that this is the $i$-th excursion above $a$ for $u'$ sufficiently close to $u$.

\begin{itemize}
\item \emph{Finally, we show that we can take $\delta''>0$ small enough so that $e_i'^{a,+} = e_a'^{(t)}$ whenever $d(u,u')<\delta''$.}

This is derived in two steps.
\begin{itemize}
    \item[-] \underline{Step 1:} Let $\eta>0$, and introduce, for $u'\in \mathscr{E}$, the number $N_{\eta}(u')$ of time intervals $(i_-,i_+)$ of excursions of $u'$ above $a$ such that $i_+-i_-> \eta$. Note that $N_{\eta}(u')\le \frac{R(u')}{\eta}<\infty$. We take $\eta$ such that $u$ has no excursion time interval above $a$  satisfying $i_+-i_-=\eta$. The first step consists in proving that for $u'\in \mathscr{E}$ sufficiently close to $u$, $N_{\eta}(u')=N_{\eta}(u)$. From the first point (applied $N_{\eta}(u)$ times), we know that for $\delta>0$ small enough, $N_{\eta}(u')\ge N_{\eta}(u)$ whenever $d(u,u')<\delta$. To prove that $N_{\eta}(u')\le N_{\eta}(u)$ holds as well when $\delta$ is sufficiently small, we use an argument by contradiction and we consider a sequence $(u_n)_{n\ge 1}$ of elements in $\mathscr{E}$ such that $d(u,u_n)\rightarrow 0$ and $N_{\eta}(u_n)\ge N_{\eta}(u)+1$. Consider $N_\eta(u)+1$ distinct excursion time intervals $(i^{(n)}_{j,-},i^{(n)}_{j,+})$, $1\le j\le N_\eta(u)+1$, of $u_n$ above $a$ such that $i^{(n)}_{j,+}-i^{(n)}_{j,-}>\eta$. We can write the corresponding excursions $e_a^{(t^{(n)}_j)}(u_n)$ for some $t_j^{(n)}$'s. Moreover, we may take $t_j^{(n)}$ such that $|i^{(n)}_{j,+}-t_j^{(n)}|>\eta/2$ and $|i^{(n)}_{j,-} -t_j^{(n)}|>\eta/2$. Since $|R(u)-R(u_n)|\rightarrow 0$, we can assume (up to some extraction) that when $n$ goes to infinity, $i^{(n)}_{j,+} \rightarrow i_{j,+}$, $i^{(n)}_{j,-}\rightarrow i_{j,-}$ and $t_j^{(n)}\rightarrow t_j\in [0,R(u)]$, for some $i_{j,+},i_{j,-}, t_j \in [0,R(u)]$.  From $u_n\rightarrow u$, we deduce that for all $j$, $y(i_{j,-})=a$ and $y(i_{j,+})=a$. For $n$ large enough, because $i^{(n)}_{j,+}-i^{(n)}_{j,-}>\eta$ and $|i^{(n)}_{j,\pm} -t_j^{(n)}|>\eta/2$, we have $e_a^{(t_j^{(n)})}(u_n)=e_a^{(t_j)}(u_n)$. Now consider $e_a^{(t_j)}(u)$. From the two previous points, $e_a^{(t_j)}(u_n) \rightarrow e_a^{(t_j)}(u)$. For any time  $s\in (i_-,i_+)$, we have $y(s)>a$ (otherwise $a$ would be a local minimum of $y$). Hence $(i_{j,-},i_{j,+})$ is an excursion time interval for $u$ and $i_{j,+}-i_{j,-}>\eta$. Therefore we constructed $N_\eta(u)+1$ distinct excursion time intervals above $a$ for $u$, which gives the desired contradiction.
    \item[-] \underline{Step 2:} Suppose for example that $z_i^{a,+}>0$. Take $\delta<\frac{z_i^{a,+}}{6}$ and $\eta=\eta(\delta)>0$ some modulus of uniform continuity for $u$ with respect to $\delta$. We can assume in order to apply Step 1 that $\eta$ is such that $u$ has no excursion above $a$ satisfying $|i_+-i_-|=\eta$. We look at the $N:=N_{\eta}(u)$ excursions $e_1,\ldots, e_N$ of $u$ above $a$ (ranked by decreasing order of the absolute value of their sizes) such that $|i_+-i_-|>\eta$, and denote their sizes by $z_1,\ldots,z_N$. Observe that the first $i$ excursions among these are the excursions $e_1^{a,+}, \ldots, e_i^{a,+}$. Indeed, if $|i_+-i_-|\le \eta$, then by uniform continuity,
\[
|u(i_+)-u(i_-)| 
\le \delta < z_i^{a,+}.
\]

Let $\varepsilon'= \frac{1}{2} \left(\min_{1\le k\le N-1} |z_{k+1}-z_k| \wedge z_i \right)$ (this is positive since all the sizes are assumed to be distinct in $\mathscr{E}$). Take times $t_1,\ldots,t_N$ in the excursion time intervals of $e_1,\ldots, e_N$. Thanks to Step 1 and the first point of the proof (applied $N$ times), there exists $\delta'>0$ such that for $d(u,u')<\delta'$, if we denote by $(i_-'^{(t_k)},i_+'^{(t_k)})$ the excursion time interval of $e'^{(t_k)}_a, 1\le k\le N$, then
\begin{enumerate}
    \item[(i)] $N_{\eta}(u')=N$,
    \item[(ii)] the excursions $e'^{(t_k)}_a, 1\le k\le N,$ are distinct,
    \item[(iii)] $\forall \, 1\le k\le N, \quad  |i_+'^{(t_k)}-i_-'^{(t_k)}|>\eta$,
    \item[(iv)] $\forall \, 1\le k\le N, \quad |z^{a,+}_k-\Delta e'^{(t_k)}_a|\le \varepsilon'.$
\end{enumerate}
An easy calculation shows that by our choice of $\varepsilon'$ and  (iv), the $\Delta e'^{(t_k)}_a, 1\le k\le N$, are ranked in decreasing order, and that 
\begin{equation}
\forall\, 1\le k\le i, \quad \Delta e'^{(t_k)}_a>\frac{z_i^{a,+}}{2}. \label{eq:lower-bound}
\end{equation}
In addition, by (i), (ii) and (iii), the $e'^{(t_k)}_a, 1\le k\le N,$ are the excursions of $u'$ above $a$ satisfying $|i_+-i_-|>\eta$.

Now set $\delta''=\min(\delta,\delta')$ and assume that $d(u,u')< \delta''$. Then for all $1\le k\le i$, $e'^{(t_k)}_a = e_k^{a,+}(u')$. Indeed, if $(i_-,i_+)$ is an excursion time interval of $u'$ such that $|i_+-i_-|\le \eta$, then 
\[
|u'(i_+)-u'(i_-)| 
\le |u'(i_+)-u(i_+)|+ |u(i_+)-u(i_-)|+ |u(i_-)-u'(i_-)| 
\le 3\delta,
\]
and so in particular $|u'(i_+)-u'(i_-)| < \frac{z_i^{a,+}}{2}$. This proves that the first $i$ excursions of $u'$ are among the $N$ previous excursions satisfying $|i_+-i_-|>\eta$. Since these are ranked in decreasing order, necessarily $e'^{(t_k)}_a = e_k^{a,+}(u')$ for all $1\le k\le i$, which concludes the proof.
\end{itemize}
\end{itemize}
Putting these three points together, we proved that $e_i^{a,+}$ is continuous on $\mathscr{E}$ which has full probability under $\gamma_z$, hence Proposition \ref{prop:continuity excursions}. 
\end{proof}


\section{Markovian properties} \label{Sec:Markovian}
In this section, we are interested in Markovian properties of excursions cut at horizontal levels. Time will therefore be indexed by the height $a$ of the cutting. 


\subsection{The branching property for excursions in $\Hb$} \label{section: branching property}
Consider an excursion under the measure $\gamma_z$. Then cutting it at some height $a>0$ yields a family of excursions above $a$ as defined in Section \ref{excursion level}. Our aim is to show that conditionally on what happens below $a$, these are independent and distributed according to the measures $\gamma_z$, where $z$ is the size of the corresponding excursion. We shall first consider the case when the original excursion is taken under the It\^o's measure $\n_+$ in $\Hb$, and then transfer the property to $\gamma_z$ by the previous disintegration result (\ref{disintegration}).

Let $\mathcal{G}^0_a$ be the $\sigma$--field containing all the information of the trajectory below level $a$ and $\mathcal{G}_a$ be the completion of $\mathcal{G}^0_a$ with the $\n_+$--negligible sets.  In other words, the $\sigma$--field $\mathcal{G}^0_a$ is generated by the trajectory $u$ once you cut out the excursions above $a$, and close up the time gaps. A formal definition of this process is the process $u$ indexed by the generalized inverse of $t\mapsto \int_0^t \mathds{1}_{\{u(s) \leq a \}}\mathrm{d}s$. 

\begin{figure} 
\begin{center}
\includegraphics[scale=0.75]{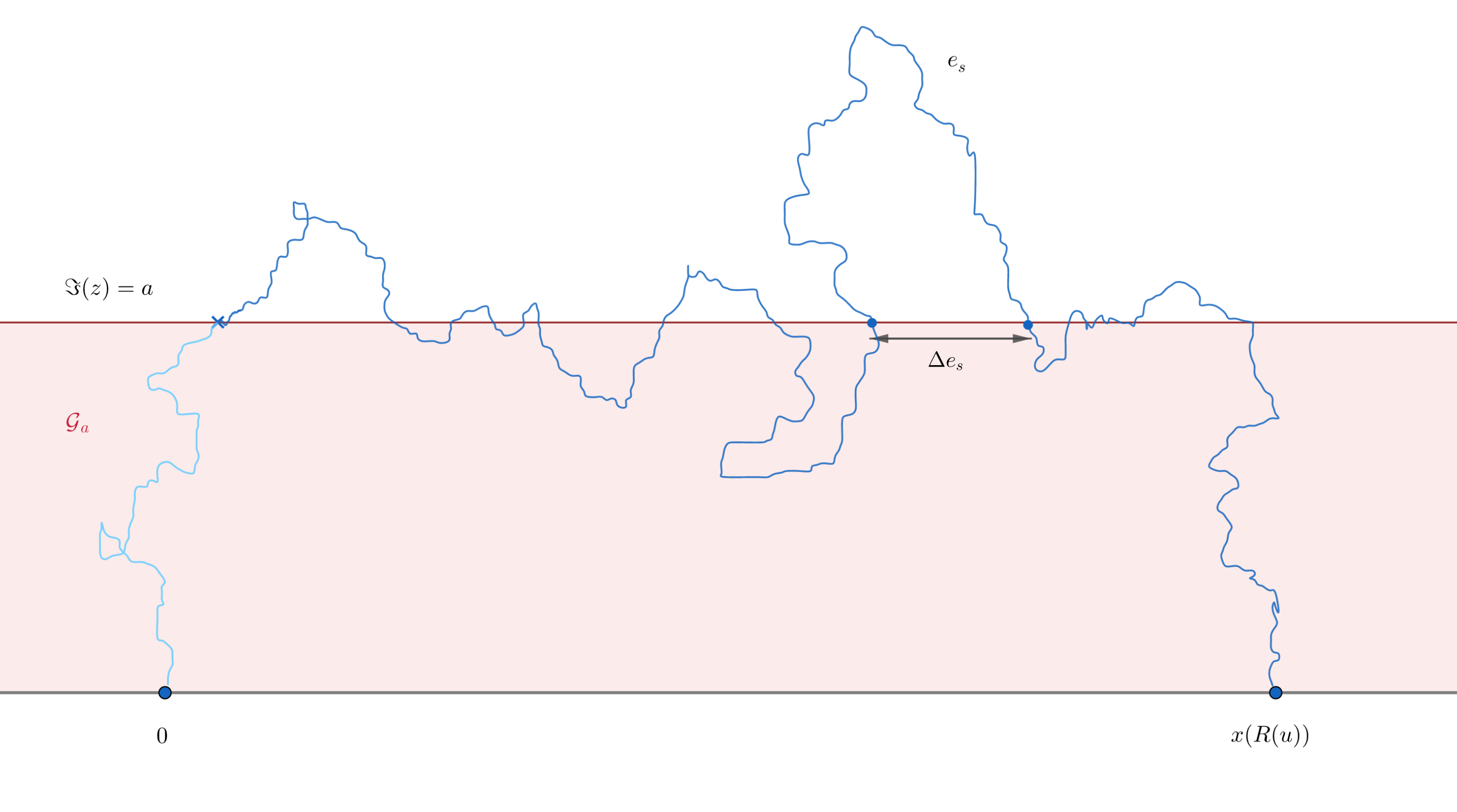}
\end{center}
\caption{The excursion process above $a$.}
\label{PPP fig}
\end{figure}

Recall from Section \ref{Sec:metric space} that $z^{a,+}_1, z^{a,+}_2, \ldots$ are the sizes of the excursions above $a$, ranked in decreasing order of their absolute value, and $e^{a,+}_1 , e^{a,+}_2, \ldots$ are the corresponding excursions.    

\begin{Prop} \label{branching theorem}(Branching property for excursions in $\Hb$ under $\n_+$)

For any $A\in \Gcal_a$, and for all nonnegative measurable functions $G_1,\ldots,G_k:U^+\rightarrow \R_+$, $k\ge 1$,
\begin{equation}  \label{branching}
\n_+\left(\mathds{1}_{\{T_a<\infty\}}\mathds{1}_A \prod_{i=1}^k G_i(e^{a,+}_i)\right) = \n_+\left(\mathds{1}_{\{T_a<\infty\}}\mathds{1}_A \prod_{i=1}^k \gamma_{z^{a,+}_i}[G_i]\right).
\end{equation}
\end{Prop}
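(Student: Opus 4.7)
The plan is to reduce the statement to It\^o's Poisson point process description of planar Brownian excursions by applying the Markov property (Lemma \ref{Markov under n}) at $T_a$, and then to invoke the disintegration formula \eqref{disintegration} to identify the kernel $\gamma_z$.

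First, I would apply Lemma \ref{Markov under n} on $\{T_a<\infty\}$: under $\n_+$ and conditionally on $\mathcal{F}_{T_a}$, the translated post-$T_a$ path $W=(X',Y')$ is a planar Brownian motion started at $0$ and killed at the time $\rho:=\inf\{t\ge 0:Y'(t)=-a\}$. The pre-$T_a$ path together with the below-$0$ portion of $W$ encode the whole $\mathcal{G}_a$-information, while the above-$a$ excursions $(e_i^{a,+})_{i\ge 1}$ of the original trajectory coincide, up to translation by $u(T_a)$, with the above-$0$ excursions of $W$ performed before killing, ranked by decreasing absolute size.

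Second, by the classical It\^o decomposition of $Y'$ at level $0$, combined with the product form $\n_+=n_+\otimes \Pb(X^{R(y)}\in\cdot)$ recalled in Section \ref{excursion process}, the above-$0$ excursions of $W$ (enriched by their real parts) form a Poisson point process on $U^+$ with intensity $\n_+$ indexed by the local time $L$ of $Y'$ at $0$, \emph{independent} of the analogous below-$0$ excursion process. Stopping at the killing time, the above-$a$ excursions form a Poisson point process with intensity $L(\rho)\,\n_+$, where the random cut-off $L(\rho)$ is a functional of the below-$0$ excursions, hence $\mathcal{G}_a$-measurable and independent of the above-excursion point process. Now the disintegration \eqref{disintegration} says that the pushforward of $\n_+$ under $u\mapsto (\Delta u,u)$ is $\frac{\mathrm{d}z}{2\pi z^2}\otimes \gamma_z(\mathrm{d}u)$, so by the marking theorem for Poisson point processes, conditionally on the sizes $(z_i^{a,+})_{i\ge 1}$---which are themselves $\mathcal{G}_a$-measurable, appearing as jumps of the $x$-coordinate of the below-$a$ trajectory---the excursions $(e_i^{a,+})_{i\ge 1}$ are mutually independent with $e_i^{a,+}\sim \gamma_{z_i^{a,+}}$. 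The ranking by decreasing $|z_i^{a,+}|$ is unambiguous because $\n_+(\{|\Delta u|>\varepsilon\}) = \tfrac{1}{\pi\varepsilon}<\infty$ for every $\varepsilon>0$. Multiplying by $\mathbf{1}_A$ and integrating under $\n_+$ then gives \eqref{branching}.

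The main obstacle is the careful bookkeeping around the $\sigma$-field $\mathcal{G}_a$: one has to verify that the double conditioning---on $\mathcal{F}_{T_a}$ via the Markov property and on the below-$0$ excursions via the independence of above and below Poisson point processes---is truly compatible with an arbitrary $A\in \mathcal{G}_a$, so that the substitution of $\prod_{i=1}^k G_i(e_i^{a,+})$ by $\prod_{i=1}^k \gamma_{z_i^{a,+}}[G_i]$ really survives after integrating against $\mathbf{1}_A$. Once this is laid out cleanly, the remaining measurability and ordering issues are routine consequences of the PPP intensity computation.
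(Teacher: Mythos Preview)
Your proposal is correct and follows essentially the same route as the paper: apply the Markov property under $\n_+$ at $T_a$ (Lemma \ref{Markov under n}), use the Poisson point process structure of the excursions above level $a$ with intensity $L\cdot\n_+$, and then invoke the disintegration \eqref{disintegration} to see that, conditionally on the sizes $(z_i^{a,+})_{i\ge 1}$, the excursions are independent with laws $\gamma_{z_i^{a,+}}$. The paper resolves your ``main obstacle'' in one line by observing that $\mathcal{G}_a$ is generated by $\mathcal{F}_{T_a}$, the below-$a$ excursions, and the sizes $(z_i^{a,+})_{i\ge 1}$, which is exactly the bookkeeping you anticipated.
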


\begin{proof} 
Lemma \ref{Markov under n} ensures that on the event $\{T_a <\infty\}$, the trajectory $u$ after time $T_a$ has the law of a  killed Brownian motion. Excursion theory tells us that given the excursions below $a$, the excursions above $a$ form a Poisson point process on $U^+$ with intensity $L \, \n_+({\rm d} u)$, where $L$ is the total local time at level $a$, see Figure \ref{PPP fig}. Finally, conditionally on the sizes $(z_i^{a,+})_{i\ge 1}$ of the excursions above $a$, these excursions are independent with law $\gamma_{z_i^{a,+}}$. We deduce the proposition since the $\sigma$-field $\mathcal{G}_a$ is generated by $\mathcal{F}_{T_a}$, the excursions below $a$, and the sizes $(z_i^{a,+})_{i\ge 1}$.
\end{proof}

\medskip

We can now transfer this property to the probability measures $\gamma_z$.

\begin{Prop} \label{branching gamma}(Branching property for excursions in $\Hb$ under $\gamma_z$)

Let $z\in\R\setminus \{0\}$. For any $A\in \Gcal_a$, and for all nonnegative measurable functions $G_1,\ldots,G_k:U^+\rightarrow \R_+$, $k\ge 1$,
\[\gamma_z\left(\mathds{1}_{\{T_a<\infty\}}\mathds{1}_A \prod_{i=1}^k G_i(e^{a,+}_i)\right) = \gamma_z\left(\mathds{1}_{\{T_a<\infty\}}\mathds{1}_A \prod_{i=1}^k \gamma_{z^{a,+}_i}[G_i]\right).\]
\end{Prop}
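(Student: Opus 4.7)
The plan is to transfer the branching property from $\n_+$ to $\gamma_z$ via the disintegration formula \eqref{disintegration} of Proposition \ref{disintegration prop}.

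First, I would apply Proposition \ref{branching theorem} not merely to the set $A \in \Gcal_a$, but to $A \cap \{\Delta u \in B\}$ for an arbitrary Borel set $B \subseteq \R$, where $\Delta u := x(R(u))$. This is legitimate because $\{\Delta u \in B\} \in \Gcal_a$: as recalled in the proof of Proposition \ref{branching theorem}, the $\sigma$-field $\Gcal_a$ is generated by the below-$a$ trajectory together with the signed sizes $(z_i^{a,+})_{i \ge 1}$, and $\Delta u$ decomposes as the sum of the horizontal displacement accumulated below $a$ (a functional of the below-$a$ trajectory) and of $\sum_i z_i^{a,+}$, both $\Gcal_a$-measurable.

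Next, disintegrate each side of the resulting identity via \eqref{disintegration}. From \eqref{gamma tilde}, the $x$-component under $\gamma_z$ is a Brownian bridge from $0$ to $z$, so $\Delta u = z$ holds $\gamma_z$-almost surely. This yields
\[
    \int_B \gamma_z\!\left(\mathds{1}_{\{T_a<\infty\}} \mathds{1}_A \prod_{i=1}^k G_i(e_i^{a,+})\right) \frac{\mathrm{d}z}{2\pi z^2} = \int_B \gamma_z\!\left(\mathds{1}_{\{T_a<\infty\}} \mathds{1}_A \prod_{i=1}^k \gamma_{z_i^{a,+}}[G_i]\right) \frac{\mathrm{d}z}{2\pi z^2},
\]
and since $B$ is arbitrary, the two integrands agree for Lebesgue-a.e. $z \in \R \setminus \{0\}$.

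Finally, to promote this almost-everywhere statement to every $z \ne 0$, I would rely on the continuity results of Section \ref{Sec:metric space}: Proposition \ref{prop: continuity gamma} gives the weak continuity of $z \mapsto \gamma_z$, while Lemma \ref{lem:continuity delta} and Proposition \ref{prop:continuity excursions} yield the continuity of the functionals $\Delta$ and $e_i^{a,+}$ outside a $\gamma_z$-negligible set. For continuous bounded $G_i$'s and for $A$ in a convergence-determining class (cylinder-type events), both sides become continuous functions of $z$ on $\R \setminus \{0\}$, so the a.e. identity extends to every $z \ne 0$; a monotone class argument then promotes the identity to arbitrary measurable $G_i$ and $A \in \Gcal_a$.

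The main obstacle is this last step: the indicator $\mathds{1}_A$ for a general $A \in \Gcal_a$ is not continuous in the trajectory, so the passage from ``Lebesgue-a.e. $z$'' to ``every $z \ne 0$'' requires combining the weak continuity of $z \mapsto \gamma_z$ with a careful approximation argument, or alternatively exploiting the scaling identity of Lemma \ref{l:scaling} to reduce everything to $\gamma_1$ before extending.
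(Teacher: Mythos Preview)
Your proposal is correct and follows essentially the same route as the paper: disintegrate the branching property under $\n_+$ using \eqref{disintegration} after inserting a function of $x(R(u))$ (which is $\Gcal_a$-measurable), obtain the identity for Lebesgue-a.e.\ $z$, then upgrade to every $z\ne 0$ via the continuity results of Section~\ref{Sec:metric space}. The paper resolves the obstacle you flag by exactly the scaling alternative you mention at the end: it works with bounded continuous $\Gcal_a$-measurable $h$ in place of $\mathds{1}_A$, rewrites both sides under $\gamma_1$ via Lemma~\ref{l:scaling}, and lets $z\searrow 1$ along a dense sequence using Lemmas~\ref{lem:continuity delta}, \ref{lem:continuity} and Propositions~\ref{prop: continuity gamma}, \ref{prop:continuity excursions} together with dominated convergence; the general $z$ then follows by scaling.
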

\begin{proof} It suffices to prove the proposition for bounded continuous functions $G_1,\ldots,G_k:U^+\rightarrow \R_+$, $k\ge 1$. Take a nonnegative measurable function $f:\R\rightarrow \R_+$ and a bounded continuous function $h:U^+ \rightarrow \R_+$ which is $\Gcal_a-$measurable. Observe that $x(R(u))$ is $\Gcal_a-$measurable as a function of $u$. From Proposition \ref{branching theorem}, we know that
\[
\n_+\left(\mathds{1}_{\{T_a<\infty\}}h(u) f(x(R(u))) \prod_{i=1}^k G_i(e^{a,+}_i)\right) = \n_+\left(\mathds{1}_{\{T_a<\infty\}}h(u) f(x(R(u))) \prod_{i=1}^k \gamma_{z^{a,+}_i}[G_i]\right).
\]
Thanks to the disintegration formula (\ref{disintegration}), we can split $\n_+$ over the size:
\[
\int_{\R} \frac{\mathrm{d}z}{2\pi z^2}\, f(z)\,\gamma_z\left(\mathds{1}_{\{T_a<\infty\}}h \prod_{i=1}^k G_i(e^{a,+}_i)\right) = \int_{\R} \frac{\mathrm{d}z}{2\pi z^2}\, f(z)\,\gamma_z\left(\mathds{1}_{\{T_a<\infty\}} h \prod_{i=1}^k \gamma_{z^{a,+}_i}[G_i]\right).
\]
Since this holds for any $f$, it entails for Lebesgue-almost every $z\in\R$,
\begin{equation} \label{eq. gamma}
\gamma_z\left(\mathds{1}_{\{T_a<\infty\}} h \prod_{i=1}^k G_i(e^{a,+}_i)\right)
=
\gamma_z\left(\mathds{1}_{\{T_a<\infty\}} h \prod_{i=1}^k \gamma_{z^{a,+}_i}[G_i]\right).
\end{equation}
To prove that this holds for all $z$, we need a continuity argument. We first treat the case $z=1$. Using the scaling property \ref{l:scaling} of the measures $\gamma_z$, for $z>0$ the left-hand side of ($\ref{eq. gamma}$) is 
\[
 \gamma_1 \left(\mathds{1}_{ \{T_{a/z}<\infty\}} h (u^{(z)}) \prod_{i=1}^k G_i(e^{a,+}_i(u^{(z)}))\right)
\]
where we recall from Lemma \ref{lem:continuity} that $u^{(z)}=zu(\cdot/z^2)$. The right-hand side term, on the other hand, is 
\[
\gamma_z\left(\mathds{1}_{\{T_a<\infty\}}h \prod_{i=1}^k \gamma_{z^{a,+}_i}[G_i]\right)
=
\gamma_1 \left(\mathds{1}_{\{ T_{a/z}<\infty \}}h(u^{(z)}) \prod_{i=1}^k \gamma_{z^{a,+}_i(u^{(z)})}[G_i]\right),
\]
and so ($\ref{eq. gamma}$) translates into 
\begin{equation} \label{eq. scaled}
\gamma_1 \left(\mathds{1}_{ \{ T_{a/z}<\infty \}}h(u^{(z)}) \prod_{i=1}^k G_i(e^{a,+}_i(u^{(z)}))\right)
=
\gamma_1 \left(\mathds{1}_{ \{ T_{a/z}<\infty \}}h(u^{(z)}) \prod_{i=1}^k \gamma_{z^{a,+}_i(u^{(z)})}[G_i]\right),    
\end{equation}
for Lebesgue-almost every $z>0$. In particular this is true for a dense set of $z$. Taking $z\searrow 1$ along some decreasing sequence, we first get 
that $u^{(z)}\rightarrow u$ by Lemma \ref{lem:continuity} and $T_{a/z}\rightarrow T_a$ by left-continuity of the stopping times. In addition, for all $1\le i\le k$, $z^{a,+}_i(u^{(z)})\rightarrow z^{a,+}_i(u)$ $\gamma_1$-almost surely because $z\rightarrow z^{a,+}_i(u^{(z)})=\Delta e_i^{a,+}(u^{(z)})$ is a continuous function (outside a negligible set) by Lemmas \ref{lem:continuity delta}, \ref{lem:continuity} and Proposition \ref{prop:continuity excursions}. Finally, by continuity of $z\mapsto \gamma_z$ (Lemma \ref{prop: continuity gamma}), for all $1\le i\le k$, $\gamma_{z^{a,+}_i(u^{(z)})}[G_i] \rightarrow \gamma_{z^{a,+}_i}[G_i]$. Applying the dominated convergence theorem to both sides of equation ($\ref{eq. scaled}$) triggers
\[
\gamma_1\left(\mathds{1}_{\{T_a<\infty\}} h \prod_{i=1}^k G_i(e^{a,+}_i)\right) = \gamma_1\left(\mathds{1}_{\{T_a<\infty\}}h \prod_{i=1}^k \gamma_{z^{a,+}_i}[G_i]\right).
\]
and concludes the proof of Proposition \ref{branching gamma} for $z=1$. The general case follows by scaling.
\end{proof}


\subsection{The locally largest evolution} \label{Sec:locally-largest}
Recall that Proposition \ref{locally largest prop} gives a canonical choice of excursion at level $a>0$, which is the locally largest excursion $e_a^{(\tb)}$. One may wonder whether the locally largest fragment $\Xi(a)= \Delta e_a^{(\tb)}$ still exhibits some kind of Markovian behavior. The following theorem answers this question. 

\begin{Thm} \label{thm:locally largest}
Let $z>0$. Under $\gamma_z$, $(\Xi(a))_{0\le a< \Im(\zb)}$ is distributed as the positive self-similar Markov process $(Z_a)_{0\le a<\zeta}$ with index $1$ starting from $z$ whose Lamperti representation is 
\[Z_a = z\exp(\xi(\tau(z^{-1}a))), \]
where $\xi$ is the L\'evy process with Laplace exponent $\Psi(q) := \ln \gamma_z [\mathrm{e}^{q\xi(1)}]$ given by
\begin{equation} \label{Lapl loc largest}
    \Psi(q) = -\frac4\pi q + \frac2\pi \int_{y>-\ln(2)} \left(\mathrm{e}^{q y}-1-q(\mathrm{e}^y-1)\right) \frac{\mathrm{e}^{-y}\mathrm{d}y}{(\mathrm{e}^y-1)^2}, \quad q<3,
\end{equation}
$\tau$ is the time change 
\[\tau(a) = \inf\left\{s\ge 0, \; \int_{0}^s \mathrm{e}^{\xi(u)}\mathrm{d}u > a\right\},\]
and $\zeta = \inf\{a\ge 0, \; Z_a=0\}$.
\end{Thm}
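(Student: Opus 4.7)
My plan splits into three stages: (i) establish the Markov property and self-similarity of $\Xi$ with index $1$; (ii) invoke Lamperti to reduce the problem to identifying a L\'evy process $\xi$; (iii) compute the Laplace exponent $\Psi$ by analyzing the jumps of $\Xi$.

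\emph{Stage 1.} Fix $a>0$. By the branching property (Proposition \ref{branching gamma}), conditionally on $\Gcal_a$, the sub-excursion above $a$ containing the distinguished point $\tb$ is distributed as $\gamma_{\Xi(a)}$, independently of $\Gcal_a$. The key observation is that the locally largest property restricts consistently: the locally largest fragment of this sub-excursion (in the sense of Proposition \ref{locally largest prop}, applied to an excursion of signed size $\Xi(a)$) coincides, up to a vertical shift by $a$, with the restriction of $\Xi$ above level $a$. Indeed, the defining inequality on $F^{(t)}$ from Proposition \ref{locally largest prop} is local and so is preserved when passing to a sub-excursion; the uniqueness statement identifies the continuation of $\Xi$ with the locally largest of the sub-excursion. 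This gives the Markov property of $\Xi$. Self-similarity of index $1$ is immediate from Lemma \ref{l:scaling}: for $z>0$, pushing $\gamma_z$ to $\gamma_1$ via $(x,y)\mapsto (x(tz^2)/z, y(tz^2)/z)$ divides vertical levels by $z$ and horizontal lengths by $z$, so under $\gamma_z$ the process $(\Xi(a))_{a\ge 0}$ has the same law as $(z\,\Xi^{(1)}(a/z))_{a\ge 0}$, where $\Xi^{(1)}$ is constructed under $\gamma_1$.

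\emph{Stage 2.} Since starting from $z>0$ the process $\Xi$ remains strictly positive until its lifetime $\Im(\zb)$ (a case analysis at a binary split of a positive parent forces the locally largest child to stay positive, and non-binary splits occur on a negligible set thanks to Proposition \ref{loop} and the argument on distinct local minima already used in the proof of Proposition \ref{locally largest prop}), Lamperti's theorem for positive self-similar Markov processes of index $1$ applies and yields the representation $\Xi(a)=z\exp(\xi(\tau(z^{-1}a)))$ stated in the theorem, for a (possibly killed) L\'evy process $\xi$. Finiteness of $\Im(\zb)$ forces $\int_0^\infty e^{\xi(u)}\mathrm{d}u<\infty$ almost surely, so $\xi$ drifts to $-\infty$.

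\emph{Stage 3.} It remains to identify $\Psi$. Jumps of $\xi$ correspond to binary splittings of the locally largest excursion: at a splitting of a parent of signed length $x>0$ into children of signed lengths $y$ and $x-y$, the locally largest rule keeps the child with $|y|\ge |x-y|$, so $y/x\in [1/2,1]\cup(1,\infty)$ (the second regime corresponds to a sibling of opposite sign and produces positive log-jumps). I would compute the jump measure by combining two ingredients. First, the Markov property under $\n_+$ (Lemma \ref{Markov under n}) at the hitting time of a fixed level reduces the local analysis above the current level to a two-dimensional Brownian motion killed on reaching $-a$, in which the horizontal coordinate at the first-hitting time of a new level is a symmetric Cauchy variable. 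Second, Bismut's description (Proposition \ref{Bismut}) gives an integrated Palm-type formula along the locally largest spine, letting one trade the jump intensity of $\xi$ for an explicit Brownian computation at a ``uniform'' point on the spine. Computing the law of the ratio $y/x$ under the resulting Cauchy-type distributions produces the density $\frac{2}{\pi}\,\frac{e^{-y}\mathrm{d}y}{(e^y-1)^2}$ on $(-\ln 2,\infty)$; the drift $-4/\pi$ is then obtained by compensating small jumps (via the martingale $e^{\xi}-1-$compensator) and by matching with the self-similar index, which fixes $\Psi(1)$ (since $a\mapsto \Xi(a)$ must be consistent with the constraint that $\xi$ drift at the correct rate, dictated by the Brownian scaling). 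The admissible range $q<3$ is read off from the decay $\sim e^{-3y}$ of the Lévy measure at $+\infty$.

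The main obstacle is Stage 3, specifically the explicit evaluation of the Lévy measure and the drift. The signed nature of our fragment (unlike the positive growth-fragmentation of \cite{BBCK}) forces one to keep track of both same-sign and opposite-sign splittings in the Cauchy computation, and the constant $4/\pi$ in the drift has to be pinned down either by a separate martingale identity (of the type invoked in Section \ref{s:martingale}) or by a direct small-jump compensation using the explicit form of the Lévy density.
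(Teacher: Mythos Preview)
Your Stages 1--2 outline a legitimate alternative framework: deduce the Markov property of $\Xi$ from Proposition~\ref{branching gamma} plus consistency of the locally-largest rule under restriction to a sub-excursion, deduce self-similarity of index $1$ from Lemma~\ref{l:scaling}, and then invoke Lamperti abstractly. The paper does \emph{not} proceed this way---it never isolates the Markov property of $\Xi$ as a separate statement---so this part is a genuinely different route and is sound in outline.

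The gap is Stage 3, which is a plan rather than a proof. One point is actually wrong: the self-similar index does \emph{not} ``fix $\Psi(1)$''. The index $1$ is already hard-wired into the time change $\tau$ in the Lamperti representation and places no constraint whatsoever on the Laplace exponent of $\xi$; you therefore have no mechanism to pin down the drift $-4/\pi$. The L\'evy density $\tfrac{2}{\pi}e^{-y}(e^y-1)^{-2}$ is likewise asserted without derivation, and the Cauchy heuristic you sketch would most naturally produce the Lamperti--L\'evy measure of the Cauchy process itself, namely $\tfrac{2}{\pi}e^{y}(e^y-1)^{-2}$ (see \cite{CC}); the extra factor $e^{-2y}$ is unexplained in your outline.

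The paper's argument supplies exactly these missing ingredients, via two ideas absent from your plan. First, Bismut (Lemma~\ref{lemma:locally largest}) together with a \emph{time-reversal} of the symmetric Cauchy process $\eta$ yields the explicit identity
\[
\gamma_x\bigl[H(\Xi(b),\,b\in[0,a])\,\mathds{1}_{\{a<\Im(\zb)\}}\bigr]
= E_x\Bigl[\tfrac{x^2}{\eta_a^{2}}\,H(\eta_b,\,b\in[0,a])\,\mathds{1}_{\{\forall b\in[0,a],\;|\eta_b|\ge|\Delta\eta_b|\}}\Bigr],
\]
so that $\Xi$ is the $h$-transform of $\eta$ by $x^2/\eta_a^2$ restricted to the locally-largest-jump event. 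Second, passing to the known Lamperti representation $\eta=x\exp(\xi^0(\tau^0(\cdot)))$ from \cite{CC}, this $h$-transform becomes the martingale change of measure $M_a=e^{-2\xi^0_a}\mathds{1}_{\{\Delta\xi^0_b>-\ln 2,\;\forall b\le a\}}$ on the L\'evy process $\xi^0$, and a routine Esscher-type computation converts $\Psi^0$ into $\Psi$. The factor $e^{-2y}$ in the L\'evy density and the drift $-4/\pi$ both fall out of this tilt automatically, with no separate matching required.
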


Recall the notation \eqref{def:uleft}-\eqref{def:Tright}. We set
\[
u^{t, \rightleftharpoons}_a := \left((u^{t,\leftarrow}(s+T_a^{t,\leftarrow}))_{s\ge 0},(u^{t,\rightarrow}(s+T_a^{t,\rightarrow}))_{s\ge 0} \right) - \left(u^{t,\leftarrow}(T_a^{t,\leftarrow}),u^{t,\leftarrow}(T_a^{t,\leftarrow})\right),
\]
with the convention that $u^{t, \rightleftharpoons}_a$ is a cemetery function if $y(t) < a$. 

We shall use the following lemma. Note that the lemma does not disintegrate the law of $u^{\tb, \rightleftharpoons}_a$ on the measures $\gamma_z$, and one has to be careful not to  confuse the $z$ appearing in the integral with the value of $x(R(u))$ (the reader should keep track of $x(R(u))$ in the proof). 
\begin{Lem} \label{lemma:locally largest}
 Let $(X,Y)$ and $(X',Y')$ be under $\Pb$ two independent planar Brownian motions starting from the origin, and for $b\le 0$, $T_b$ and $T'_b$ their respective hitting times of $\{\Im(z)=b\}$, with $\widetilde{T}_{b}$ and $\widetilde{T}'_{b}$ denoting the hitting times of $\{\Im(z)<b\}$. For $\alpha \ge a\ge 0$, and $z\in\R$ we set
\[
\mathcal{E}_{\alpha ,a,z} := \left\{\left|z+(X'_{T'_{b}}-X_{T_{b}})\right| \ge  \left| (X'_{\widetilde{T}'_b}-X_{\widetilde{T}_b}) - (X'_{T'_{b}}-X_{T_{b}}) \right|,\, \forall\, b\in [-\alpha ,-\alpha +a] \right\}.
\]
Then, for any nonnegative measurable function $H$, 
\[
\n_+[H(u^{\tb, \rightleftharpoons}_a)\mathds{1}_{\{a<\Im(\zb)\}}] 
= \int_z {\mathrm{d} z \over 2 \pi z^2} h(-a,z),
\]
where $h$ is :
\[
h(-a,z) := \Eb \left[  H\left( (X_s,Y_s)_{s \in [0,T_{-a}]},  (z+X'_s,Y'_s)_{s \in [0,T'_{-a}]}\right),\, \mathcal{E}_{a,a,z}  \right].
\]
\end{Lem}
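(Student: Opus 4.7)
The plan is to decompose the excursion under $\n_+$ at level $a$ using the strong Markov property of planar Brownian motion together with the branching property (Proposition~\ref{branching theorem}). First, on $\{a<\Im(\zb)\}$, I will identify the unique level-$a$ excursion $e^\star=e_{i^\star}^{a,+}$ containing $\tb$: it is the unique one whose size $z^\star:=\Xi(a)$ satisfies the below-$a$ locally-largest condition
\[
|F^{(\tb)}(a'')|\ge|F^{(\tb)}(a''^-)-F^{(\tb)}(a'')|\quad\text{for all }a''\in[0,a].
\]
Almost sure uniqueness follows since at each height where two candidate fragments would split apart, only one of the two resulting pieces can dominate its sibling in absolute value.

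Next, applying the strong Markov property at $\tb-T_a^{\tb,\leftarrow}$ and $\tb+T_a^{\tb,\rightarrow}$ and shifting by $u^{\tb,\leftarrow}(T_a^{\tb,\leftarrow})$, I will identify $u^{\tb,\rightleftharpoons}_a$ with a pair of independent planar Brownian motions starting at $(0,0)$ and $(z^\star,0)$ and killed on first hitting $\{\Im=-a\}$, exactly matching the pair appearing in $h(-a,z^\star)$. Substituting the identity $F^{(\tb)}(a+b)=z^\star+X'_{T'_b}-X_{T_b}$ for $b\in[-a,0]$ (and its jump counterpart) then transcribes the above locally-largest condition into the event $\mathcal{E}_{a,a,z^\star}$.

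To produce the measure $\mathrm{d}z/(2\pi z^2)$ on $z^\star$, I plan to exploit that, conditional on $\mathcal{G}_a$, $e^\star$ is $\gamma_{z^\star}$-distributed (Proposition~\ref{branching theorem}) and admits a locally-largest $\gamma_{z^\star}$-almost surely (Proposition~\ref{locally largest prop}). Combined with the disintegration $\n_+=\int\mathrm{d}z/(2\pi z^2)\,\gamma_z$ (Proposition~\ref{disintegration prop}) applied to $e^\star$ itself, viewed as an $\n_+$-excursion with endpoint $z^\star$, integrating out $e^\star$ (whose contribution is the total mass of $\gamma_{z^\star}$, equal to $1$) yields the required density on $z^\star$.

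The hard part will be this final step: transferring the disintegration measure from the global endpoint $x(R(u))$ to the fragment size $z^\star=\Xi(a)$. These are linked by $x(R(u))=z^\star+X'_{T'_{-a}}-X_{T_{-a}}$, so the change of variable from $z_0$ to $z^\star$ requires integrating out $X'_{T'_{-a}}-X_{T_{-a}}$, whose law under the two shifted Brownian motions is Cauchy; checking that $\int\mathrm{d}z^\star/(2\pi(z^\star)^2)$ convolved with this Cauchy density recovers the original $\mathrm{d}z_0/(2\pi z_0^2)$ is the key computational identity that makes the two decompositions compatible.
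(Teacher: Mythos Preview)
Your plan has a genuine gap at its core: you cannot use the strong Markov property to conclude that the \emph{left} tail of $u^{\tb,\rightleftharpoons}_a$ is a Brownian motion under $\n_+$. The time $\tb-T_a^{\tb,\leftarrow}$ (the starting time of $e^\star$) is not the first hitting time of level $a$; it is some later return to $a$, determined by the locally-largest condition below $a$. From there you want to run the path \emph{backwards} down to time $0$ and claim it is Brownian. But the Markov property under $\n_+$ (Lemma~\ref{Markov under n}) only runs forward, and the excursion before $T_a$ is not a Brownian motion (its imaginary part is a $BES^3$-type process). There is no time-reversal argument in your outline that would produce two \emph{independent} Brownian motions. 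This is exactly the difficulty that Bismut's description (Proposition~\ref{Bismut}) solves: after introducing an auxiliary Lebesgue-random time $t$ on $[0,R(u)]$, both $u^{t,\leftarrow}$ and $u^{t,\rightarrow}$ become honest Brownian motions killed at $\{\Im=-y(t)\}$. The paper's proof hinges on the identity
\[
H(u^{\tb, \rightleftharpoons}_a)\,\mathds{1}_{\{a<\Im(\zb)\}}=\int_0^{R(u)} H(u^{t, \rightleftharpoons}_a)\,\mathds{1}_{\{y(t)>a,\;e_a^{(\tb)}=e_a^{(t)}\}}\,\frac{\mathrm{d}t}{R(e_a^{(t)})},
\]
which turns the problem into a Bismut computation; the factor $1/R(e_a^{(t)})$ is then absorbed by the Markov property at the level $-\alpha+a$ and a second application of Bismut identifies the remaining integral as $\int \tfrac{\mathrm{d}z}{2\pi z^2}\,h(-a,z)$.

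Your ``hard part'' is also ill-posed as stated. The density $z\mapsto 1/(2\pi z^2)$ is not integrable near $0$, so convolving it with a Cauchy kernel does not make sense, and in any case you only know $x(R(u))=z^\star+X'_{T'_{-a}}-X_{T_{-a}}$ \emph{after} you have identified the tails with Brownian motions---which is precisely what you are trying to prove. The branching property and the disintegration formula alone do not give you the joint law of $(z^\star,u^{\tb,\rightleftharpoons}_a)$ under $\n_+$; Bismut is the missing ingredient.
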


{\bf Remark}. Observe that the process $(\Xi(a'),\, a' \le a)$ is measurable with respect to $u^{\tb, \rightleftharpoons}_a$. We denote by $\D$ the space of c\`adl\`ag real-valued paths with finite lifetime, endowed with the local Skorokhod topology. It results from the lemma that for any nonnegative measurable function on $\D$, 
\[
\n_+[H(\Xi(b),\, b \in [0,a])\mathds{1}_{\{a<\Im(\zb)\}}] 
 = \int_z {\mathrm{d} z \over 2 \pi z^2} h(-a,z),
\]
where $h$ is now :
\[
h(-a,z) := \Eb \left[  H\left(z+X'_{T'_{-a+b}}-X_{T_{-a+b}},\, b\in [0,a]\right),\, \mathcal{E}_{a,a,z}  \right].
\]

\begin{proof}
Integrating over the duration of the excursion $e_a^{(\tb)}$, we see that for $\n_+-$almost every $ u\in U^+ $, 
\[
H(u^{\tb, \rightleftharpoons}_a) \mathds{1}_{\{a<\Im(\zb)\}} = \int_{0}^{R(u)} H(u^{t, \rightleftharpoons}_a) \mathds{1}_{\{y(t) > a,\, e_a^{(\tb)}=e_a^{(t)}\}} {1\over R(e_a^{(t)})} \mathrm{d} t.
\]

\noindent With Bismut's description of $\n_+$ (Proposition \ref{Bismut}, cf. Figure \ref{Bismut fig}), we get
\[
    \n_+[H(u^{\tb, \rightleftharpoons}_a)\mathds{1}_{\{a<\Im(\zb)\}}] 
     =  
    \int_{\alpha  > a} \mathrm{d}\alpha\, \Eb \left[ \frac{ H\left( \left((\overline{X},\overline{Y}),(\overline{X'},\overline{Y'})  \right)\right)}{T'_{-\alpha +a}+T_{-\alpha +a}},\, \mathcal{E}_{\alpha ,a,0}  \right],
\]

\noindent where
\begin{align*}
(\overline{X},\overline{Y})
&= 
((X,Y)(s+T_{-\alpha +a}))_{s\in [0,T_{-\alpha }-T_{-\alpha +a}]}- (X,Y)(T_{-\alpha +a}),
\\
(\overline{X'},\overline{Y'}) 
&= 
((X',Y')(s+T'_{-\alpha +a}))_{s\in [0,T'_{-\alpha }-T'_{-\alpha +a}]}
- (X,Y)(T_{-\alpha +a}),
\end{align*}

\noindent  with the notation $(X,Y)(s)=(X_s,Y_s)$. By the strong Markov property at times $T_{-\alpha +a}$ and $T'_{- \alpha +a}$, the former integral can be expressed as
\[
\int_{\alpha  > a} \mathrm{d} \alpha \, \Eb \left[ h\left(-a,X'_{T'_{-\alpha +a}}-X_{T_{-\alpha +a}}\right) \frac1{ T'_{-\alpha +a}+T_{-\alpha +a} }\right],
\]

\noindent for $h$ defined as
\[
h(-a,z) :=  \Eb \left[  H\left( (X_s,Y_s)_{s \in [0,T_{-a}]},  (z+X'_s,Y'_s)_{s \in [0,T'_{-a}]}  \right),\, \mathcal{E}_{a,a,z}  \right].
\]

\noindent See Figure \ref{Bismut fig}. By a change of variables, the former integral is
\[
\int_{\alpha \ge 0} \mathrm{d} \alpha \,  \Eb \left[ h(-a,X'_{T'_{-\alpha }}-X_{T_{-\alpha }}) \frac1{ T'_{-\alpha }+T_{-\alpha } }\right].
\]
Therefore, we proved that
\[
\n_+[H(u^{\tb, \rightleftharpoons}_a )\mathds{1}_{\{a<\Im(\zb)\}}] 
=
\int_{\alpha  \ge 0} \mathrm{d}  \alpha \,  \Eb \left[ h(-a,X'_{T'_{-\alpha }}-X_{T_{-\alpha }}) \frac1{ T'_{-\alpha }+T_{- \alpha } }\right].
\]
On the other hand, using again Bismut's decomposition of $\n_+$, we see that (actually for any $h$),
\begin{align*}
\n_+[h(-a, x(R(u)))] & = 
\n_+\left(\int_{0}^{R(u)} h(-a,x(R(u)) {1\over R(u)} \mathrm{d} t\right)
\\
&=
\int_{\alpha  \ge 0} \mathrm{d} \alpha \,  \Eb \left[  h\left(-a, X'_{T'_{-\alpha }}-X_{T_{-\alpha }} \right) \frac{1}{ T'_{-\alpha }+T_{-\alpha } } \right].
\end{align*}

\noindent Comparing the last two equations, we proved that
\[
\n_+[H(u^{\tb, \rightleftharpoons}_a)\mathds{1}_{\{a<\Im(\zb)\}}] 
=  
\n_+[h(-a, x(R(u)))] = \int_z {\mathrm{d} z \over 2 \pi z^2} h(-a,z),
\]
by Proposition \ref{disintegration prop}. 
\end{proof}
\medskip

We now come to the proof of Theorem \ref{thm:locally largest}. We closely follow the strategy of Le Gall and Riera in \cite{LG-Riera}.

\begin{proof} Let $H$ be a nonnegative bounded continuous function on $\D$. 
From the previous Lemma \ref{lemma:locally largest}, or rather from the Remark following its statement, we know that
\[
\n_+[H(\Xi(b),\, b \in [0,a])\mathds{1}_{\{a<\Im(\zb)\}}] 
= \int_z {\mathrm{d} z \over 2 \pi z^2} h(-a,z),
\]
where $h$ is:
\[
h(-a,z) = \Eb \left[  H\left(z+X'_{T'_{-a+b}}-X_{T_{-a+b}},\, b\in [0,a]\right),\, \mathcal{E}_{a,a,z}  \right].
\]

\noindent Notice that, in the notation of Lemma \ref{lemma:locally largest}, $b\mapsto X'_{\widetilde{T}'_{-b}}-X_{\widetilde{T}_{-b}}$ is a (c\`adl\`ag) symmetric Cauchy process of Laplace exponent $\psi(\lambda) = -2 |\lambda|$ (for example, use that it is a L\'evy process and Proposition 3.11 of \cite{RY}, Chap. III). Denote by $\eta_b$ the double of the Cauchy process which under $P_z$, starts from $z$, and $\Delta \eta_b$ the jump at time $b$. Write $\hat{\eta}_b=\eta_{(a-b)^-}$ for the time-reversal of $\eta$, $\Delta \hat \eta_b$ being the jump of $\hat \eta$ at time $b$. Then by definition of $h$,
\[
h(-a,z) = E_z\left[  H( \hat{\eta}_{b},\, b\in [0,a])\, \mathds{1}_{\{\forall \, b  \in [0,a], \; |\hat{\eta}_b| \geq  |\Delta \hat{\eta}_{b}| \}} \right].
\]
Now we want to reverse time in the function $h$. Conditioning on $\eta_a$,  
\begin{align*}
&h(-a,z) \\ 
&=\frac1\pi\int_\R  {2a \mathrm{d} x \over (2a)^2 + (x-z)^2} 
E_z[  H( \hat{\eta}_{b},\, b\in [0,a])\, \mathds{1}_{\{\forall \, b  \in [0,a], \; |\hat{\eta}_{b}| \ge |\Delta \hat{\eta}_{b}| \}} | \eta_a = x ].
\end{align*}

\noindent By Corollary 3, Chap. II of \cite{Ber4}:
\begin{align*}
& E_z[  H( \hat{\eta}_{b},\, b\in [0,a])\, \mathds{1}_{\{\forall \, b  \in [0,a], \; |\hat{\eta}_{b}| \ge |\Delta \hat{\eta}_b | \}} | \eta_a = x ] \\
&=
E_x[  H(\eta_{b},\, b\in [0,a])\, \mathds{1}_{\{\forall \, b  \in [0,a], \; |\eta_b| \ge | \Delta\eta_b | \}} | \eta_a = z ].    
\end{align*}
Indeed, the Cauchy process $\eta$ is symmetric, hence is itself its dual. We obtain 
\begin{align*}
&\int_{\R} {\mathrm{d} z \over 2 \pi z^2} h(-a,z) =\\
&\int_{\R} {\mathrm{d} z \over 2 \pi z^2} \frac1\pi \int_{\R}  {2a \mathrm{d} x \over (2a)^2 + (x-z)^2}
E_x [  H(\eta_{b},\, b\in [0,a])\, \mathds{1}_{\{\forall \, b  \in [0,a], \; |\eta_b| \ge |\Delta \eta_{b}| \}} | \eta_a = z ].
\end{align*}
We can rewrite it as
\[
\int_\R { \mathrm{d} x \over 2 \pi x^2}  \frac1\pi\int_\R  {2a \mathrm{d} z \over (2a)^2 + (x-z)^2}   
E_x \left[ { x^2  \over  z^2}  H(\eta_{b},\, b\in [0,a])\, \mathds{1}_{\{\forall \, b  \in [0,a], \; |\eta_b| \ge |\Delta \eta_{b}| \}} \bigg| \eta_a = z \right],
\]
which is
\[
 \int_\R { \mathrm{d} x \over 2 \pi x^2} E_x \left[ { x^2  \over  (\eta_a)^2}  H(\eta_{b},\, b\in [0,a])\, \mathds{1}_{\{\forall \, b  \in [0,a], \; |\eta_b| \ge |\Delta \eta_{b}| \}} \right].
\]

\noindent Now this gives the law of $\Xi$ under the disintegration measures $\gamma_x$. Indeed, take instead of $H$ some nonnegative measurable function $f$ of the initial size $\Xi(0)$, multiplied by $H$. Then using the above expression, we find that
\begin{align*}
&\n_+[f(\Xi(0))H(\Xi(b),\, b \in[0,a])\mathds{1}_{\{a<\Im(\zb)\}}] \\
&=
\int_\R { \mathrm{d} x \over 2 \pi x^2} f(x) E_x \left[ { x^2  \over  (\eta_a)^2}  H(\eta_{b},\, b\in [0,a])\, \mathds{1}_{\{\forall \, b  \in [0,a], \; |\eta_b| \ge |\Delta \eta_{b}| \}} \right].
\end{align*}
Hence for Lebesgue-almost every $x\in\R$, 
\begin{equation} \label{eq:cauchy}
\gamma_x[H(\Xi(b),\, b \in [0,a] )\mathds{1}_{\{a<\Im(\zb)\}}]
=
E_x \left[ { x^2  \over  (\eta_a)^2}  H(\eta_{b},\, b\in [0,a])\, \mathds{1}_{\{\forall \, b  \in [0,a], \; |\eta_b| \ge |\Delta \eta_{b}|\}} \right],
\end{equation}
and by continuity this must hold for all $x\in\R$. Indeed, by scaling, the left-hand side of equation (\ref{eq:cauchy}) is 
\[\gamma_x[H(\Xi(b),\, b \in [0,a])\mathds{1}_{\{a<\Im(\zb)\}}]
=
\gamma_1 [H(x\Xi(x^{-1}b),\, b \in [0,a])\mathds{1}_{\{a<x\Im(\zb)\}}].\]
The right-hand term can be put in the same form by using the scale invariance of the Cauchy process. Since (\ref{eq:cauchy}) holds for almost every $x$, it must hold on a dense set of $x>0$, and we may take $x\nearrow 1$ along a sequence. By dominated convergence, we get
\[
\gamma_1 [H(\Xi(b),\, b \in [0,a])\mathds{1}_{\{a<\Im(\zb)\}}]
=
E_1 \left[ { 1  \over  (\eta_a)^2}  H(\eta_{b},\, b\in [0,a])\, \mathds{1}_{\{\forall \, b  \in [0,a], \; |\eta_b| \ge |\Delta \eta_{b}| \}} \right],\]
and this proves that equation (\ref{eq:cauchy}) holds for $x=1$. The general case $x\in\R$ follows by scaling.

Notice that, almost surely, on the event $\{ \forall \, b  \in [0,a], \; |\eta_b| \ge |\Delta \eta_{b}| \}$, if $\eta_0>0$, then $\eta_b$ is positive for all $b\in[0,a]$. We know from \cite{CC} that a symmetric Cauchy process starting from $x>0$ killed when entering the negative half-line can be written using its Lamperti representation as $x \mathrm{e}^{\xi^0 (\tau^0(a))}$ where
 \[
 \tau^0(a) := \int_0^a \frac{\mathrm{d} s}{\eta_s} = \inf\left\{s\ge0, \; \int_0^s x\mathrm{e}^{\xi^0(u)} \mathrm{d}u \ge a\right\},
 \]
and $(\xi^0(a),\, a\ge 0)$ is under $P$ 
a  L\'evy process killed at an exponential time of parameter $\frac{2}{\pi}$, starting from $0$ with Laplace exponent
\begin{equation} \label{Psi}
\Psi^0(q) = \frac{2}{\pi} \int_{\R} (\mathrm{e}^{q y} -1 - q (\mathrm{e}^{y}-1) \mathds{1}_{|\mathrm{e}^y -1|<1} ) \mathrm{e}^y (\mathrm{e}^y-1)^{-2} \mathrm{d} y - \frac2\pi, \quad -1<q<1.  
\end{equation}

Let $\Delta \xi^0_b$ denote the jump of $\xi^0$ at time $b$, i.e. $\Delta \xi^0_b:= \xi^0_b - \xi^0_{b^-}$. The following lemma is the analog of Lemma 17 in \cite{LG-Riera}.
\begin{Lem}
For every $a\ge 0$, set
\[
M_a = \mathrm{e}^{-2\xi^0_a}\mathds{1}_{\{\forall\, b \in [0,a], \;  \Delta \xi^0_b > -\ln(2)\}}.
\]
Then $(M_a)_{a\ge 0}$ is a martingale with respect to the canonical filtration of the process $\xi^0$. Under the tilted probability measure $\mathrm{e}^{-2\xi^0_a}\mathds{1}_{\{\forall\, b \in [0,a], \;  \Delta \xi^0_b > -\ln(2)\}}\cdot P$, the process $(\xi^0(b))_{b\in [0,a]}$ is a L\'evy process with Laplace exponent $\Psi$ introduced in \eqref{Lapl loc largest} in Theorem \ref{thm:locally largest}. 
\end{Lem}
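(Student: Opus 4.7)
The plan is to view $M$ as a classical Esscher-type exponential change of measure for the L\'evy process $\xi^0$, combined with an indicator that no jump below $-\ln 2$ has occurred. The proof splits into (a) checking the martingale identity $E[M_a]=1$ for all $a\ge 0$, and (b) computing the Laplace exponent under the tilted law and matching it with $\Psi$.

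First I would reduce (a) to a single expectation computation. By the L\'evy property of $\xi^0$, for $s<a$ the increment $\xi^0_a-\xi^0_s$ is independent of $\mathcal{F}_s$ and distributed as $\xi^0_{a-s}$, and the event ``no jump below $-\ln 2$ in $[0,a]$'' factorises over $[0,s]$ and $[s,a]$. Hence $M$ is multiplicative in $a$, and $E[M_a|\mathcal{F}_s]=M_s\cdot E[M_{a-s}]$, so the martingale property reduces to $E[M_a]=1$ for every $a>0$. To compute $E[M_a]$ I decompose $\xi^0$ into three independent ingredients: the L\'evy process $\xi^>$ with L\'evy measure $\nu^0\mathds{1}_{\{y>-\ln 2\}}$ and no killing; the compound Poisson process $\xi^<$ of jumps below $-\ln 2$; and the independent exponential killing at rate $2/\pi$. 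A change of variables $u=e^y-1$ shows that the total mass of $\nu^0$ on $\{y<-\ln 2\}$ is $2/\pi$, so the probability of no jump of $\xi^<$ in $[0,a]$ equals $e^{-2a/\pi}$. On the event in $M_a$ the contribution of $\xi^<$ to $\xi^0_a$ vanishes, giving
\[
 E[M_a] = e^{a\Psi^>(-2)}\cdot e^{-2a/\pi}\cdot e^{-2a/\pi},
\]
so the martingale claim is equivalent to $\Psi^>(-2)=4/\pi$. This identity I would verify by expressing $\Psi^>$ from $\Psi^0$ by restricting the integral to $\{y>-\ln 2\}$ and correcting for the compensation of the removed jumps (which produces a linear term involving $\int_{-\infty}^{-\ln 2}\tfrac{e^y}{e^y-1}\mathrm{d}y=-\ln 2$), then evaluating at $q=-2$ and using the algebraic factorisation $e^{-2y}+2e^y-3=e^{-2y}(e^y-1)^2(2e^y+1)$ to reduce the integrand to $2+e^{-y}$ on $(-\ln 2,\ln 2)$, together with a direct partial-fractions calculation of $\int_{\ln 2}^\infty(e^{-2y}-1)\tfrac{e^y}{(e^y-1)^2}\mathrm{d}y$.

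For the second part, on $\mathcal{F}_a$ set $Q := M_a\cdot P$. Repeating the same decomposition with a generic $q$ in place of $-2$ gives
\[
 E_Q[e^{q\xi^0_a}] = E\bigl[e^{(q-2)\xi^0_a}\mathds{1}_{\{\forall b\le a,\,\Delta\xi^0_b>-\ln 2\}}\bigr] = e^{a(\Psi^>(q-2)-4/\pi)},
\]
for $q$ in the appropriate strip. Since $Q$ inherits independence and stationarity of increments on $[0,a]$ from $P$, the process $(\xi^0(b))_{b\in[0,a]}$ is a L\'evy process under $Q$ with Laplace exponent $\Psi^Q(q)=\Psi^>(q-2)-4/\pi$. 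The Esscher shift $q\mapsto q-2$ precisely multiplies the L\'evy density $\tfrac{2}{\pi}\tfrac{e^y}{(e^y-1)^2}$ by $e^{-2y}$, producing $\tfrac{2}{\pi}\tfrac{e^{-y}}{(e^y-1)^2}$, which is exactly the L\'evy density appearing in \eqref{Lapl loc largest}.

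The main obstacle, in my view, is the final algebraic reconciliation: the compensation $q(e^y-1)\mathds{1}_{\{y<\ln 2\}}$ inherited from $\Psi^>$ must be replaced by the untruncated compensation $q(e^y-1)$ of \eqref{Lapl loc largest}, and the resulting correction---together with the constant $-4/\pi$---has to collapse into the drift term $-\tfrac{4}{\pi}q$ of $\Psi$. This matching is where small sign and arithmetic errors are most likely; it relies on explicit primitives such as $\int_{-\infty}^{-\ln 2}\tfrac{e^y}{e^y-1}\mathrm{d}y=-\ln 2$ (and the analogous integral on $(\ln 2,\infty)$ produced by extending the compensation past $\ln 2$), and on the fact that all the compensation-dependent corrections are linear in $q$ so they can only contribute to the drift.
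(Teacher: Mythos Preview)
Your approach is essentially the same as the paper's: both decompose $\xi^0$ into the piece with jumps above $-\ln 2$ and the compound Poisson piece of large negative jumps, compute $E[\mathrm{e}^{(q-2)\xi^0_a}\mathds{1}_{\{\text{no big jump}\}}]$ as $\exp(a(\Psi^{>}(q-2)-4/\pi))$ (the paper writes this as $\Psi'(q-2)-2/\pi$ with the killing rate kept inside $\Psi'$, which is the same thing), and then carry out the algebraic reduction to the L\'evy--Khintchin form of $\Psi$. The only difference is organizational---you separate the killing rate from $\xi^{>}$ explicitly while the paper folds it into $\Psi'$---and the paper performs in full the drift-matching computation you correctly flag as the delicate part.
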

\begin{proof}
We compute
\[
E[\mathrm{e}^{(q-2)\xi_a^0}\mathds{1}_{\{\forall\, b \in [0,a], \;  \Delta \xi_b^0 > -\ln(2)\}} ].
\]

Indeed, that $(M_a)_{a\ge 0}$ is a martingale will come from the fact that $\xi^0$ is a L\'evy process and that the expectation above is $1$ when $q=0$.  
To compute this expectation, we decompose $\xi^0$ into its small and large jumps parts:
\[
\xi_a^0 = \xi_a' + \xi_a'',
\]
where $\xi_a'' = \sum_{0\le b\le a} \Delta \xi_b^0 \mathds{1}_{\Delta\xi_b^0\le -\ln(2)}$. Notice that and $\xi'$ and $\xi''$ are independent. Then by independence, the above expectation is
\begin{align}
E[\mathrm{e}^{(q-2)\xi_a^0}\mathds{1}_{\{\forall\, b \in [0,a], \;  \Delta \xi_b^0 > -\ln(2)\}} ] &= E[ \mathds{1}_{\{\xi_a''=0\}} \,\mathrm{e}^{(q-2)\xi_a'}] \nonumber \\
&= P(\xi_a''=0) E[\mathrm{e}^{(q-2)\xi_a'}]. \label{newPsi}
\end{align}
Thus, we need to compute the Laplace exponents of $\xi'$ and $\xi''$ (under $P$), that we denote respectively by $\Psi'$ and $\Psi''$. Because $\xi''$ is the pure-jump process given by the jumps of $\xi^0$ smaller than $-\ln(2)$, its Laplace exponent is given by the L\'evy measure of $\xi^0$ restricted to $(-\infty,-\ln(2)]$, namely
\begin{equation} \label{Psi''}
\Psi''(q) = \frac{2}{\pi} \int_{y\le -\ln(2)} \left(\mathrm{e}^{q y}-1\right) \frac{\mathrm{e}^y}{(\mathrm{e}^y-1)^2} \mathrm{d}y.
\end{equation}
It results from the independence of $\xi'$ and $\xi''$ that the Laplace exponent of $\xi'$ is $\Psi' = \Psi^0-\Psi''$, hence by equations (\ref{Psi}) and (\ref{Psi''}), for all $-1<q<1$,
\begin{multline} \label{Psi'}
\Psi'(q) = \frac{2}{\pi} \int_{y>-\ln(2)} (\mathrm{e}^{q y} -1 - q (\mathrm{e}^{y}-1) \mathds{1}_{\{|\mathrm{e}^y -1|<1\}} ) \mathrm{e}^y (\mathrm{e}^y-1)^{-2} \mathrm{d} y \\
-\frac{2}{\pi} q \int_{y\le -\ln(2)} (\mathrm{e}^{ y}-1) \mathds{1}_{|\mathrm{e}^y-1|<1}\, \mathrm{e}^y(\mathrm{e}^y-1)^{-2} \mathrm{d}y - \frac2\pi.
\end{multline}
The middle term in this expression (\ref{Psi'}) is
\begin{align*}
\frac{2}{\pi} q \int_{y\le -\ln(2)} (\mathrm{e}^{ y}-1)  \mathrm{e}^y(\mathrm{e}^y-1)^{-2} \mathrm{d}y &= - \frac{2}{\pi} q \int_{y\le -\ln(2)}  \frac{\mathrm{e}^y}{1-\mathrm{e}^y} \mathrm{d}y   \\
&= -\frac2\pi q \int_0^{1/2} \frac{\mathrm{d}x}{1-x} \\
&= -\frac2\pi q\ln(2).
\end{align*}
Hence 
\begin{multline} \label{Psi'_2}
\Psi'(q) = \frac{2}{\pi} \int_{y>-\ln(2)} (\mathrm{e}^{q y} -1 - q (\mathrm{e}^{y}-1) \mathds{1}_{|\mathrm{e}^y -1|<1} ) \mathrm{e}^y (\mathrm{e}^y-1)^{-2} \mathrm{d} y \\
+\frac2\pi q\ln(2) - \frac2\pi.
\end{multline}
This extends analytically to all $q<1$. Let us come back to (\ref{newPsi}). We have for $q<3$
\begin{align*}
E [\mathrm{e}^{(q-2)\xi_a^0}\mathds{1}_{\forall\, b \in [0,a], \;  \Delta \xi^0_b > -\ln(2)} ] &= P(\xi_a''=0) E[\mathrm{e}^{(q-2)\xi_a'}] \\
&= \mathrm{e}^{a\Psi''(\infty)} \mathrm{e}^{a\Psi'(q-2)} \\
&= \exp\left(-\frac2\pi a \int_{y\le -\ln(2)} \frac{\mathrm{e}^y}{(\mathrm{e}^y-1)^2} \mathrm{d}y\right) e^{a\Psi'(q-2)} \\
& = \mathrm{e}^{a(\Psi'(q-2)-\frac{2}{\pi})},
\end{align*}
by a change of variables. 

This essentially concludes the calculation of the new Laplace exponent $\widetilde{\Psi}$ of $\xi^0$ under the tilted measure $e^{-2\xi_a^0}\mathds{1}_{ \{\forall \, b  \in [0,a], \; \Delta \xi_b^0 \ge -\ln(2)\}}\cdot P$, which is simply
\begin{equation} \label{Psi tilde}
\widetilde{\Psi}(q) = \Psi'(q-2)-\frac{2}{\pi}, \quad q<3. 
\end{equation} 

Still we can put it in a L\'evy-Khintchin form. Replacing $q$ by $q-2$ in the integral in (\ref{Psi'_2}), we get 
\begin{align*}
& \int_{y>-\ln(2)} (\mathrm{e}^{-2y}\mathrm{e}^{q y} -1 - (q-2) (\mathrm{e}^{y}-1) \mathds{1}_{|\mathrm{e}^y -1|<1} ) \mathrm{e}^y (\mathrm{e}^y-1)^{-2} \mathrm{d} y \\   
&=  \int_{y>-\ln(2)} (\mathrm{e}^{q y} -\mathrm{e}^{2 y}  - (q-2) (\mathrm{e}^{3y}-\mathrm{e}^{2 y}) \mathds{1}_{|\mathrm{e}^y -1|<1} ) \mathrm{e}^{-y} (\mathrm{e}^y-1)^{-2} \mathrm{d} y \\
&= \int_{y>-\ln(2)} (\mathrm{e}^{q y} -1 - q (\mathrm{e}^{y}-1) \mathds{1}_{|\mathrm{e}^y -1|<1} ) \mathrm{e}^{-y} (\mathrm{e}^y-1)^{-2} \mathrm{d} y \\
& + \int_{y>-\ln(2)} \left[1-\mathrm{e}^{2 y} + \left(q(\mathrm{e}^y-1)- (q-2)(\mathrm{e}^{3y}-\mathrm{e}^{2 y})\right) \mathds{1}_{|\mathrm{e}^y -1|<1} \right] \frac{\mathrm{e}^{-y}} {(\mathrm{e}^y-1)^{2}} \mathrm{d}y.
\end{align*}
After simplifications, we find that the last integral is equal to 
\begin{multline} \label{LK-calc}
\int_{y>-\ln(2)} \left[1-\mathrm{e}^{2 y} + \left(q(\mathrm{e}^y-1)- (q-2)(\mathrm{e}^{3y}-\mathrm{e}^{2 y})\right) \mathds{1}_{|\mathrm{e}^y -1|<1} \right] \frac{\mathrm{e}^{-y}} {(\mathrm{e}^y-1)^{2}} \mathrm{d}y \\
=2+2\ln(2)-q\left(2\ln(2)+\frac32\right).
\end{multline}
From equations (\ref{Psi tilde}), (\ref{Psi'}) and (\ref{LK-calc}), we deduce 
\begin{equation} \label{Lapl indicator}
\widetilde{\Psi}(q) = -\frac2\pi \left(\ln(2)+\frac32\right)q + \frac2\pi \int_{y>-\ln(2)} \left(\mathrm{e}^{q y}-1-q(\mathrm{e}^y-1)\mathds{1}_{|\mathrm{e}^y-1|<1}\right) \frac{\mathrm{e}^{-y}\mathrm{d}y}{(\mathrm{e}^y-1)^2}.    
\end{equation}

\noindent Finally, we can remove the indicator using simple calculations. One finds that 
\[\int_{y>-\ln(2)} (1-\mathrm{e}^y)\frac{\mathrm{e}^{-y}}{(\mathrm{e}^y-1)^2} \mathds{1}_{\{|\mathrm{e}^y-1|\ge 1\}}\mathrm{d}y = \frac12-\ln(2),\]
and therefore 
\[\widetilde{\Psi}(q) = -\frac4\pi q + \frac2\pi \int_{y>-\ln(2)} \left(\mathrm{e}^{q y}-1-q(\mathrm{e}^y-1)\right) \frac{\mathrm{e}^{-y}\mathrm{d}y}{(\mathrm{e}^y-1)^2}, \quad q<3.\]

\noindent Hence we recovered the expression for $\Psi$ in the statement of Theorem \ref{thm:locally largest} and this gives both the martingale property and the law of $\xi^0$ under the change of measure.
\end{proof}

We finish the proof of Theorem \ref{thm:locally largest} with the arguments of \cite{LG-Riera} that we reproduce here to be self-contained. Let $x>0$. Equation \eqref{eq:cauchy} reads
\[
\gamma_x[H(\Xi(b),\, b \in [0,a])\mathds{1}_{\{a<\Im(\zb)\}}]
=
E \left[  M_{\tau^0(a)} H\left(x \exp(\xi^0(\tau^0(b))),\, b\in [0,a]\right) \right].
\]

\noindent The optional stopping theorem implies that for any $c>0$,
\begin{align*}
E &\left[ M_{\tau^0(a)} H\left(x \exp(\xi^0(\tau^0(b))),\, b\in [0,a]\right) \mathds{1}_{\{c>\tau^0(a)\}}\right] \\
&=
E\left[ M_{c} H\left(x \exp(\xi^0(\tau^0(b))),\, b\in [0,a]\right)\mathds{1}_{\{c>\tau^0(a)\}} \right].
\end{align*}

\noindent By the lemma, the right-hand side is, with the notation $\xi$ of the theorem,  
\[
E \left[ H\left(x \exp(\xi(\tau(b))),\, b\in [0,a]\right)\mathds{1}_{\{c>\tau(a)\}} \right].
\]

\noindent Making $c\mapsto \infty$ and using dominated convergence completes the proof.
\end{proof}
\medskip

In addition, in order to study the genealogy of the growth-fragmentation process linked to Brownian excursions in the next section, we need to clarify the behavior of the \emph{offspring} of $\Xi$. By offspring we mean all the excursions that were created at times $a$ when the excursion $e^{(\tb)}_a$ divided into two excursions (\emph{i.e.} at jump times of $\Xi$). We rank these excursions in descending order of the absolute value of their sizes. This way we get a sequence $(z_i,a_i)_{i\ge 1}$ of jump sizes and times for $\Xi$, associated to excursions $e_i,i\ge 1$, of size $z_i$ above $a_i$. 

\begin{Thm} \label{idp}
Let $z\in \R\backslash\{0\}$. Under $\gamma_z$, conditionally on the jump sizes and jump times $(z_i,a_i)_{i\ge 1}$ of $\Xi$, the excursions $e_i, i\ge 1$, are independent and each $e_i$ has law $\gamma_{z_i}$.
\end{Thm}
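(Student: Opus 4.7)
My plan is to combine the branching property of Proposition \ref{branching gamma} at deterministic horizontal levels with the continuity results from Section \ref{Sec:metric space}, via an approximation of the random jump times $(a_i)$ of $\Xi$ by deterministic levels $a_i+\varepsilon$.

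By scaling (Lemma \ref{l:scaling}) it suffices to treat the case $z=1$. A standard monotone-class argument will then reduce the claim to the following identity: for every $n\ge 1$, every bounded continuous functional $\Phi$ of the first $n$ marked jumps $(z_i,a_i)_{i\le n}$, and every bounded continuous $G_1,\ldots,G_n$ on $U^+$,
\[
\gamma_1\!\Bigl[\Phi\bigl((z_i,a_i)_{i\le n}\bigr)\prod_{i=1}^n G_i(e_i)\Bigr]
=
\gamma_1\!\Bigl[\Phi\bigl((z_i,a_i)_{i\le n}\bigr)\prod_{i=1}^n \gamma_{z_i}[G_i]\Bigr].
\]

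To prove this, I would fix $\varepsilon>0$ and work on the full-probability event $\mathcal{A}_\varepsilon$ that the $a_i$ are distinct, that $a_n+\varepsilon<\Im(\zb)$, and that for each $i\le n$ the sibling $e_i$ has no further split in the slab $(a_i,a_i+\varepsilon]$. On $\mathcal{A}_\varepsilon$, the top part $e_i^\varepsilon$ of $e_i$ above the level $a_i+\varepsilon$ is then a single excursion, of size $z_i^\varepsilon$. I would apply Proposition \ref{branching gamma} iteratively down the genealogy of $\Xi$: at level $a_1+\varepsilon$ it yields that $e_1^\varepsilon$ is an independent $\gamma_{z_1^\varepsilon}$-excursion, while the excursion above $a_1+\varepsilon$ carrying the future of $\Xi$ is a $\gamma_{\Xi(a_1+\varepsilon)}$-excursion, to which one reapplies Proposition \ref{branching gamma} at level $a_2+\varepsilon$, and so on. This should produce the pre-limit identity
\[
\gamma_1\!\Bigl[\Phi^\varepsilon\prod_{i=1}^n G_i(e_i^\varepsilon)\mathds{1}_{\mathcal{A}_\varepsilon}\Bigr]
=
\gamma_1\!\Bigl[\Phi^\varepsilon\prod_{i=1}^n \gamma_{z_i^\varepsilon}[G_i]\mathds{1}_{\mathcal{A}_\varepsilon}\Bigr],
\]
where $\Phi^\varepsilon:=\Phi((z_i^\varepsilon,a_i)_{i\le n})$.

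I would then let $\varepsilon\downarrow 0$: the indicator $\mathds{1}_{\mathcal{A}_\varepsilon}\to 1$ almost surely; by Proposition \ref{prop:continuity excursions} and Lemma \ref{lem:continuity delta}, $e_i^\varepsilon\to e_i$ in $(U^+,d)$ and $z_i^\varepsilon\to z_i$; and by Proposition \ref{prop: continuity gamma}, $\gamma_{z_i^\varepsilon}[G_i]\to\gamma_{z_i}[G_i]$. Dominated convergence on both sides will yield the desired factorization. The hardest step, I expect, is the iterated application of Proposition \ref{branching gamma}: the index of the excursion above $a_i+\varepsilon$ that carries the future of $\Xi$ is not $\Gcal_{a_i+\varepsilon}$-measurable, since it is determined by the locally-largest selection in the future trajectory. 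To handle this rigorously, I would use that Proposition \ref{branching gamma} describes the full joint law of the excursions above a fixed level, so that further conditioning on any measurable selection of one of these as the distinguished carrier of $\Xi$, however future-dependent, leaves the marginal laws of the remaining excursions unchanged; this is what enables the iteration to be pushed down the genealogy of $\Xi$.
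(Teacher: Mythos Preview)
Your approach is quite different from the paper's, and it has two genuine gaps that I do not see how to close along the lines you sketch.

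First, Proposition \ref{branching gamma} is stated for a \emph{deterministic} level $a$. The times $a_i$ are the jump times of $\Xi$ ranked by decreasing $|z_i|$; in particular $a_1$ is the time of the largest jump of $\Xi$, which requires knowledge of the whole trajectory of $\Xi$ to identify. None of the $a_i$ is a $(\Gcal_a)$--stopping time, so there is no strong-Markov version of Proposition \ref{branching gamma} available at $a_i+\varepsilon$. Your iteration ``at level $a_1+\varepsilon$, then at level $a_2+\varepsilon$, \ldots'' therefore never gets started. (Incidentally, your worry in the last paragraph is misplaced: for a \emph{fixed} level $a$, the carrier $e_a^{(\tb)}$ is $\Gcal_a$--measurable, since following the locally largest piece at each split up to level $a$ only uses endpoint information below $a$. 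The real obstruction is the randomness of the levels, not the selection rule.)

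Second, the event $\mathcal{A}_\varepsilon$ does not have full probability---indeed it has probability zero for every $\varepsilon>0$. Under $\gamma_{z_i}$ the sibling $e_i$ is a genuine Brownian-type excursion above level $a_i$; its imaginary part visits the level $a_i+\varepsilon$ infinitely often, so $e_i$ has countably many sub-excursions above $a_i+\varepsilon$, never a single one. Hence the object $e_i^\varepsilon$ you want to pass to the limit is not well defined as written, and the convergence $e_i^\varepsilon\to e_i$ via Proposition \ref{prop:continuity excursions} does not apply.

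The paper takes a completely different route which sidesteps both issues. It uses Lemma \ref{lemma:locally largest} (itself a consequence of Bismut's description, Proposition \ref{Bismut}) to represent the pair $u^{\tb,\rightleftharpoons}_a$ in terms of two independent planar Brownian motions $(X,Y)$ and $(z+X',Y')$ run until they hit $\{\Im=-a\}$, with the locally-largest constraint encoded in the event $\mathcal{E}_{a,a,z}$. Under this representation, the offspring $e_1,\ldots,e_n$ born before level $a$ are precisely the excursions of these two Brownian motions above the running infimum of their imaginary parts. By L\'evy's theorem those excursions form a Poisson point process with intensity $2\,\mathds{1}_{\R_-}\mathrm{d}b\,\n_+(\mathrm{d}u)$, so conditionally on their sizes they are independent with laws $\gamma_{z(\cdot)}$. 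Crucially, $\mathcal{E}_{a,a,z}$ is measurable with respect to the sizes alone, so the conditioning goes through. One then disintegrates over $x(R(u))$ and uses the continuity machinery of Section \ref{Sec:metric space} to pass from a.e.\ $z$ to every $z$. The key idea you are missing is this Bismut/L\'evy identification of the offspring with a Poisson point process; it is what allows one to handle all the (random) birth times $a_i$ simultaneously without ever applying the branching property at a random level.
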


\begin{proof}
By Lemma \ref{lemma:locally largest}, we know that for all nonnegative measurable function $H$,
\[
\n_+[H(u^{\tb, \rightleftharpoons}_a)\mathds{1}_{\{a<\Im(\zb)\}}] 
=  
 \int_z {\mathrm{d} z \over 2 \pi z^2} h(-a,z),
\]
where $h$ is :
\[
h(-a,z) = \Eb \left[  H\left( (X_s,Y_s)_{s \in [0,T_{-a}]},  (z+X'_s,Y'_s)_{s \in [0,T'_{-a}]}\right),\, \mathcal{E}_{a,a,z}  \right].
\]

\noindent Imagine that $H$ is some functional of the offspring of $\Xi$ below level $a$, say $H\left( u^{\tb, \rightleftharpoons}_a \right) = f_1(e^{(a)}_1)\cdots f_n(e^{(a)}_n)$, where the $e^{(a)}_i$ denote the offspring of $\Xi$ created before $a$, ranked in descending order of the absolute value of their sizes $z^{(a)}_i$, and the $f_i$'s are taken continuous and bounded. For such a function $H$, $h$ is given by 
\[h(-a,z)=\Eb\left[f_1(\varepsilon_1)\cdots f_n(\varepsilon_n), \, \mathcal{E}_{a,a,z} \right], \]
where $\varepsilon_1, \ldots, \varepsilon_n$ are the $n$ largest excursions (before hitting $\{\Im(z)=-a\}$) of $(X,Y)$ and $(X',Y')$ above the past infimum of their imaginary parts. Consider the collection $\{(b,e_b^{\rightleftharpoons}), \, b\in[-a,0]\}$ where $e_b^{\rightleftharpoons}$ is an  excursion of the Brownian motions  $(X,Y)$ or $(X',Y')$ above the past infimum of their imaginary parts when the infimum is equal to $b$ (set $e_b^{\rightleftharpoons}=\delta$ if no such excursion exists). A consequence of L\'evy's Theorem, (Theorem 2.3, Chap. VI of \cite{RY}) is that the collection $\{(b,e_b^{\rightleftharpoons}), \, b\le 0\}$ is a Poisson point process of intensity $2 \mathds{1}_{\R_-}\mathrm{d}b\,  \n_+(\mathrm{d}u)$. Write $z(e)$ for the size of an excursion $e$, i.e. the difference between its endpoint and its starting point. Conditionally on the sizes  $\{(b,z(e_b^{\rightleftharpoons})),\, b\le 0 \}$, the excursions $e_b^{\rightleftharpoons}$ are distributed as independent excursions with law $\gamma_{z(e_b^{\rightleftharpoons})}$. Observe that $\mathcal{E}_{a,a,z}$ is measurable with respect to  $\{(b,z(e_b^{\rightleftharpoons})),\, b \le 0 \}$.  Therefore, conditioning on the sizes of the excursions yields
\[h(-a,z)=\Eb\left[\gamma_{z(\varepsilon_1)}(f_1)\cdots \gamma_{z(\varepsilon_n)}(f_n), \, \mathcal{E}_{a,a,z} \right]. \]
And so using Lemma \ref{lemma:locally largest} backwards, we get
\[\n_+\left[f_1(e^{(a)}_1)\cdots f_n(e^{(a)}_n)\mathds{1}_{\{a<\Im(\zb)\}}\right] = \n_+\left[\gamma_{z^{(a)}_1}(f_1)\cdots \gamma_{z^{(a)}_n}(f_n) \mathds{1}_{\{a<\Im(\zb)\}}\right]. \]
Multiplying by a function of the endpoint $x(R(u))$ and disintegrating over it gives 
\[\gamma_z\left[f_1(e^{(a)}_1)\cdots f_n(e^{(a)}_n)\mathds{1}_{\{a<\Im(\zb)\}}\right] = \gamma_z\left[\gamma_{z^{(a)}_1}(f_1)\cdots \gamma_{z^{(a)}_n}(f_n) \mathds{1}_{\{a<\Im(\zb)\}}\right], \]
for Lebesgue-almost every $z\in \R$. Let us prove that this holds for example when $z=1$. By scaling (Lemma \ref{l:scaling}), for $z>0$ this writes 
\begin{align*}
\gamma_1 &\left[f_1(e^{(a)}_1(u^{(z)}))\cdots f_n(e^{(a)}_n(u^{(z)}))\mathds{1}_{\{a<z\Im(\zb)\}}\right] \\ 
&= 
\gamma_1 \left[\gamma_{z^{(a)}_1(u^{(z)})}(f_1)\cdots \gamma_{z^{(a)}_n(u^{(z)})}(f_n) \mathds{1}_{\{a<z\Im(\zb)\}}\right].
\end{align*}
We then condition on the birth times of these excursions. We can apply Proposition \ref{prop:continuity excursions} at different levels and Lemma \ref{lem:continuity} to prove that $\gamma_1$-almost surely, for all $1\le i\le n$, $e^{(a)}_i(u^{(z)})\underset{z\nearrow 1}{\longrightarrow} e^{(a)}_i(u)$ in $U$. Besides, $z^{(a)}_i(u^{(z)}) = \Delta \left(e_i^{(a)}(u^{(z)})\right)$, so by Lemma \ref{lem:continuity delta}, $z^{(a)}_i(u^{(z)}) \underset{z\nearrow 1}{\longrightarrow} z^{(a)}_i(u)$, and by continuity of $z\mapsto \gamma_z$ (Proposition \ref{prop: continuity gamma}), $\gamma_{z^{(a)}_i(u^{(z)})} \underset{z\nearrow 1}{\longrightarrow} \gamma_{z^{(a)}_i(u)}$ almost surely under $\gamma_1$. An application of the dominated convergence theorem finally gives 
\[
\gamma_1\left[f_1(e^{(a)}_1)\cdots f_n(e^{(a)}_n)\mathds{1}_{\{a<\Im(\zb)\}}\right] = \gamma_1\left[\gamma_{z^{(a)}_1}(f_1)\cdots \gamma_{z^{(a)}_n}(f_n)\mathds{1}_{\{a<\Im(\zb)\}} \right].
\]
 
\noindent The statement follows.
\end{proof}

\begin{figure} 
\begin{center}
\includegraphics[scale=0.7]{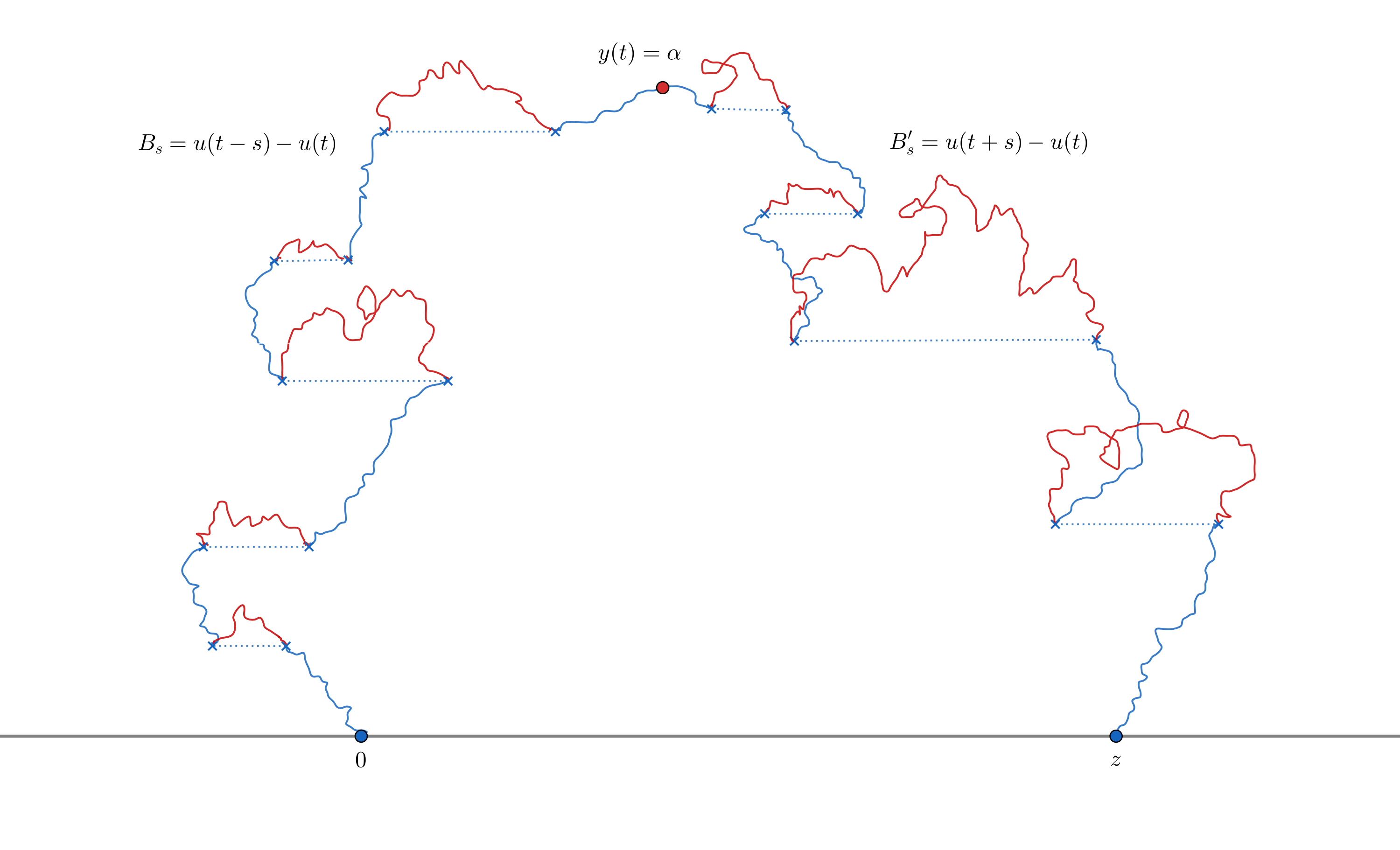}
\end{center}
\caption{Excursions of $B=(X,Y)$ and $B'=(X',Y')$ above their past infimum. The past infimum process is depicted in blue, and by L\'evy's theorem the excursions above it form a Poisson point process represented in red.}
\label{Offspring fig}
\end{figure}


\subsection{A change of measures} \label{s:martingale}
We begin by calling attention to a natural martingale associated to the growth-fragmentation process. 

\begin{Prop} \label{prop:martingale}
Let $z\in \R\backslash\{0\}$. Under $\gamma_z$, the process 
\[{\mathcal M}_a = \mathds{1}_{\{T_a<\infty\}} \sum_{i\geq 1} |\Delta e^{a,+}_i|^2, \quad a\ge 0,\]
is a $(\Gcal_a)_{a\ge 0}-$martingale.
\end{Prop}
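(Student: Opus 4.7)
The plan is to reduce the martingale property, via the branching property and scaling, to a single numerical identity, and then to verify that identity through the spinal decomposition of the cell system together with an It\^o-type formula for the underlying L\'evy process. First, by Proposition \ref{branching gamma} applied at levels $a<b$, on the event $\{T_a<\infty\}$ the excursions of $u$ above level $b$ are precisely the excursions above level $b-a$ of the pieces $e_i^{a,+}$, hence $\mathcal{M}_b(u) = \sum_i \mathcal{M}_{b-a}(e_i^{a,+})$, and taking conditional expectations yields
\[
\gamma_z\bigl[\mathcal{M}_b \mid \Gcal_a\bigr]
=
\mathds{1}_{\{T_a<\infty\}}\sum_i \gamma_{z_i^{a,+}}[\mathcal{M}_{b-a}].
\]
The martingale property therefore reduces to checking $\gamma_w[\mathcal{M}_c] = w^2$ for every $w\in\R\setminus\{0\}$ and $c\ge 0$; by the scaling Lemma \ref{l:scaling} and the symmetry $X\mapsto -X$, this reduces further to the single identity $\gamma_1[\mathcal{M}_c] = 1$ for all $c \ge 0$.

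To establish $\gamma_1[\mathcal{M}_c] = 1$, I would use the spinal decomposition of Theorems \ref{thm:locally largest} and \ref{idp}. Under $\gamma_1$, the locally largest fragment $\Xi$ is the positive self-similar Markov process with Lamperti driver $\xi$ of Laplace exponent $\Psi$, and at each jump time $a_j$ of $\Xi$ a daughter excursion of signed size $z_j = -\Delta\Xi_{a_j}$ is grafted; conditionally on $(a_j, z_j)_{j\ge 1}$ these daughters are independent with laws $\gamma_{z_j}$. The analytic heart of the argument is the cumulant identity
\[
\Psi(2) + \int_{y>-\ln 2}(\mathrm{e}^y-1)^2\,\nu(\mathrm{d}y) = -\frac{4}{\pi} + \frac{2}{\pi}\int_{-\ln 2}^{\infty}\mathrm{e}^{-y}\,\mathrm{d}y = -\frac{4}{\pi} + \frac{4}{\pi} = 0,
\]
where $\nu(\mathrm{d}y) = \frac{2}{\pi}\,\frac{\mathrm{e}^{-y}}{(\mathrm{e}^y-1)^2}\mathds{1}_{\{y>-\ln 2\}}\,\mathrm{d}y$ is the L\'evy measure of $\xi$. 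Combined with the compensation formula for the Poisson random measure of jumps of $\xi$, this relation $\kappa(2)=0$ shows that $N_s := \mathrm{e}^{2\xi(s)} + \sum_{s' \le s}(\mathrm{e}^{\xi(s')}-\mathrm{e}^{\xi(s'-)})^2$ is a martingale started from $1$, and after the Lamperti time change $\tau$ this yields the spine-level conservation
\[
\Eb\bigl[\Xi(c)^2\mathds{1}_{\{c<\Im(\zb)\}}\bigr] + \Eb\Big[\sum_{j:\, a_j\le c}z_j^2\Big] = 1.
\]

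To lift this spine-level identity to the full cell system, I would argue by induction over generations. Letting $\mathcal{M}_c^{(n)}$ denote the contribution of cells of generation at most $n$ still alive at time $c$, and $R_c^{(n)}$ the total squared size of cells of generation $n+1$ born before time $c$, a recursive application of Theorem \ref{idp} combined with the spine conservation law above gives, for every $n \ge 0$, the identity $\gamma_1[\mathcal{M}_c^{(n)}] + \gamma_1[R_c^{(n)}] = 1$. The hard part will be the passage to the limit $n \to \infty$: monotone convergence immediately yields $\gamma_1[\mathcal{M}_c^{(n)}] \nearrow \gamma_1[\mathcal{M}_c]$, but to obtain equality with $1$ one must show $\gamma_1[R_c^{(n)}] \to 0$, i.e.\ that no squared mass escapes to infinite generations. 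This is precisely a non-explosion statement for the signed growth-fragmentation (compatible with the first remark following Theorem \ref{thm:main}) and I expect it to follow from the cumulant identity $\kappa(2)=0$ together with standard branching-type estimates.
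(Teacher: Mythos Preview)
Your reduction via the branching property to showing $\gamma_z[\mathcal{M}_a]=z^2$ matches the paper's first step. From there, however, the paper takes a completely different and more direct route: it computes $\n_+[\mathcal{M}_a\, g(x(R(u)))]$ explicitly using the Markov property at $T_a$ (Lemma \ref{Markov under n}) and the master formula for the Poisson point process of excursions above level $a$. Since the sizes of those excursions have intensity $L\cdot\frac{\mathrm{d}z}{2\pi z^2}$ (where $L$ is the local time at level $a$), the factor $|z|^2$ cancels, the integral over sizes becomes translation-invariant, and one obtains $\n_+[\mathcal{M}_a\, g(x(R(u)))]=\frac{1}{2\pi}\int g$ in one line. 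Disintegration gives $\gamma_z[\mathcal{M}_a]=z^2$ for almost every $z$, and a short Fatou/branching trick upgrades this to every $z$.

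Your approach through the cell system and the identity $\Psi(2)+\int(\mathrm{e}^y-1)^2\nu(\mathrm{d}y)=0$ is conceptually appealing, and your spine-level conservation $\Eb[\Xi(c)^2\mathds{1}_{\{c<\zeta\}}]+\Eb[\sum_j z_j^2]=1$ is correct. The gap is exactly where you locate it: showing $\gamma_1[R_c^{(n)}]\to 0$. This is not a routine estimate. In branching-random-walk language you are in the critical case (your ``$\kappa(2)=0$'' --- beware that the paper's $\kappa$ is the \emph{positive} cumulant, for which $\kappa(2)=-\frac{2}{\pi}$), so the additive martingale $\sum_{|u|=n+1}|\Xcal_u(0)|^2$ has constant mean; the restriction to birth times $\le c$ does help (it forces many small ancestors along the line), but turning this into $\Eb[R_c^{(n)}]\to 0$ is essentially equivalent to proving that the signed growth-fragmentation is conservative at exponent $2$, which is precisely the content of the proposition you are trying to prove. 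Your generational induction by itself only yields the inequality $\gamma_1[\mathcal{M}_c]\le 1$. Note also that writing $\mathcal{M}_c^{(n)}\nearrow\mathcal{M}_c$ already uses the identification of the cell system with the excursion picture (Theorem \ref{thm:growth-frag}); that theorem does not rely on Proposition \ref{prop:martingale}, so there is no circularity, but the dependence should be made explicit. The paper's excursion-theoretic computation sidesteps all of this: the exact value $z^2$ drops out of the master formula with no limiting procedure.
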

\begin{proof}
The branching property \ref{branching theorem} shows that it is enough to prove that $\gamma_z[{\mathcal M}_a]=z^2$ for all $a\ge 0$.

For a  Brownian excursion process $(e_s)_{s>0}$ in the sense of Definition \ref{excursion process}, we use the shorthand $0<s^+\le T$ to denote times $0<s\le T$ such that $e_s \in U^+$. Let $g:\R\rightarrow \R_+$ be a nonnegative measurable function. By the Markov property at time $T_a$, see Lemma \ref{Markov under n}, 
\begin{equation}
\label{martingale}
     \n_+\left( {\mathcal M}_a g(x(R(u)))  \right)
=
       \n_+\left(\mathds{1}_{\{T_a<\infty\}} \, \Eb\left[\sum_{s^+\le L_{T_{-a}}} |\Delta e_s|^2 \, g(X(T_{-a}))\right]\right).
\end{equation}

\noindent By the master formula,
\begin{align*}
\Eb\left[\sum_{s^+\le L_{T_{-a}}} |\Delta e_s|^2 \, g(X(T_{-a}))\right] 
& = \Eb \left[ \int_0^{T_{-a}} \mathrm{d}L_s \int_{-\infty}^{+\infty} \frac{\mathrm{d}z}{2\pi z^2} \, z^2 \, \Eb \left[g(z'+X_{T_{-a}})\right]_{\big|z'=z+X_s}\right] \\
&=
\Eb \left[ \int_0^{T_{-a}} \mathrm{d}L_s \int_{-\infty}^{+\infty} \frac{\mathrm{d}z'}{2\pi }  \, \Eb \left[g(z'+X_{T_{-a}})\right]\right]
\\
& = \Eb\left[ L_{T_{-a}} \frac{1}{2\pi}\int_{-\infty}^{+\infty} g\right],
\end{align*}
since the Lebesgue measure is an invariant measure for the Brownian motion. Finally, the law of the Brownian local time $L_{T_{-a}}$ at the hitting time of $-a$ is known to be exponential with mean $2a$ (see for example Section 4, Chap. VI of \cite{RY}). Hence $\Eb\left[ L_{T_{-a}} \frac{1}{2\pi}\int_{-\infty}^{+\infty} g\right]=2a \times \frac{1}{2\pi}\int_{-\infty}^{+\infty} g$. Coming back to (\ref{martingale}), we get
\[ 
\n_+\left( {\mathcal M}_a g(x(R(u)))  \right)
=
 2a \times \left( \frac{1}{2\pi}\int_{-\infty}^{+\infty} g\right) \n_+\left(T_a<\infty\right). 
\]
But $\n_+\left(T_a<\infty\right) = \n_+\left(\sup(y) \ge a\right) = \frac{1}{2a}$ (see Proposition 3.6, Chapter XII, of \cite{RY}), so finally 
\[
\n_+\left( {\mathcal M}_a g(x(R(u)))  \right)
 = \frac{1}{2\pi}\int_{-\infty}^{+\infty} g.
\]
Disintegrating $\n_+$ over $z$ as in Proposition \ref{disintegration prop} yields
\[
\int_{-\infty}^{+\infty} \frac{\mathrm{d}z}{2\pi z^2} \, g(z) \, \gamma_z[{\mathcal M}_a]  = \frac{1}{2\pi}\int_{-\infty}^{+\infty} g.
\]
This holds for all nonnegative measurable function $g$, and thus for Lebesgue-almost every $z \in \R$, 
\[
\gamma_z[{\mathcal M}_a] = z^2.
\]
Recall the notation $u^{(z)}=zu(\cdot /z^2)$ for $z>0$ from Lemma \ref{lem:continuity}. By scaling, this means for Lebesgue-almost every $z >0$,
\[
\gamma_1 \bigg[ \mathds{1}_{\{z^2 T_{a/z}<\infty\}} \sum_{i\ge 1} |\Delta e^{a,+}_i(u^{(z)})|^2\bigg]
=
z^2,
\]
which yields
\begin{equation}\label{eq:change_dense}
\gamma_1 \bigg[ \mathds{1}_{\{T_{a/z}<\infty\}} \sum_{i\ge 1} |\Delta e^{a,+}_i(u^{(z)})|^2\bigg]
=
z^2.
\end{equation}
Again, this must hold on a dense set of endpoints $z$, and thus taking $z$ according to some sequence, Lemma \ref{lem:continuity} and Proposition \ref{prop:continuity excursions} together with Fatou's lemma imply that $\gamma_1[{\mathcal M}_a] \le 1$. This holds for all $a$, and so by scaling we deduce that for all $z\neq 0$, $\gamma_z[{\mathcal M}_a] \le z^2$. On the other hand, notice that $\Delta e^{a,+}_i(u^{(z)})= z\Delta e^{a/z,+}_i(u)$. By the branching property under $\gamma_1$ (Proposition \ref{branching gamma}), for a $z<1$ such that equation \eqref{eq:change_dense} holds,
\begin{align*} 
1=\gamma_1 \bigg[ \mathds{1}_{\{T_{a/z}<\infty\}} \sum_{i\ge 1} |\Delta e^{a/z,+}_i|^2\bigg]
&=
\gamma_1 \bigg[ \mathds{1}_{\{T_a<\infty\}} \sum_{i\ge 1} \gamma_{\Delta e^{a,+}_i} \bigg( \mathds{1}_{T_{\frac{a}{z}-a}<\infty} \sum_{j\ge 1} |\Delta e^{\frac{a}{z}-a,+}_j|^2\bigg)\bigg] \\
&\le \gamma_1 \bigg[ \mathds{1}_{\{T_{a}<\infty\}} \sum_{i\ge 1} |\Delta e^{a,+}_i|^2\bigg].
\end{align*}
Finally combining the two inequalities, we have $\gamma_1 [{\mathcal M}_a] = 1$, and  $\gamma_z [{\mathcal M}_a] = z^2$ by scaling.
\end{proof}

\bigskip

Associated to this martingale is the change of measures 
\[\frac{\mathrm{d}\mu_z}{\mathrm{d}\gamma_z}\bigg|_{\Gcal_a} = \frac{ {\mathcal M}_a}{z^2}, \quad a\ge 0.\]

We aim at making explicit the law $\mu_z$. 
Following Chap. 5.3 of \cite{Law}, we call $\Hb-$excursion a process in the upper half-plane whose real part is a Brownian motion and whose imaginary part is an independent three-dimensional Bessel process starting at $0$. We also introduce, for $a>0$, $S_a = \inf\{s>0,\; y(R(u)-s) = a\}$. We have the following characterization.

\begin{Thm}
Let $z\in \mathbb{R}\backslash\{0\}$. For any $a>0$, under $\mu_z$, $(u(s))_{0\le s\le T_a}$ and $(u(R(u)-s)-z)_{0\le s\le S_a}$ are two independent $\Hb-$excursions stopped at the hitting time of $\{\Im(z)=a\}$. 
\end{Thm}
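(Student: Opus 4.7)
The plan is to show that, for all bounded continuous $F$ and $G$,
\[
\mu_z\!\left[F\bigl(u|_{[0,T_a]}\bigr)\, G\bigl((u(R(u)-\cdot)-z)|_{[0,S_a]}\bigr)\right]
= \Eb^{\Hb}[F]\,\Eb^{\Hb}[G],
\]
where $\Eb^{\Hb}$ denotes expectation under the law of an $\Hb$-excursion stopped at the hitting time of $\{\Im=a\}$. Following the pattern of Proposition~\ref{branching gamma}, I lift to $\n_+$: by the disintegration \eqref{disintegration} and the definition $\mu_z = \mathcal{M}_a\cdot\gamma_z/z^2$, for any nonnegative Borel function $g$ on $\R$,
\[
\n_+\!\left[\mathcal{M}_a\, F\, G\, g(x(R(u)))\right]
= \int_{\R} \frac{\mathrm{d}z}{2\pi}\, g(z)\, \mu_z[F\,G],
\]
so computing the left-hand side in closed form gives the factorization for Lebesgue-a.e.\ $z$, which extends to every $z\ne 0$ by a scaling + continuity argument as in Proposition~\ref{branching gamma}.

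To compute the left-hand side I apply Lemma~\ref{Markov under n} at $T_a$: on $\{T_a<\infty\}$, the forward path $u|_{[0,T_a]}$ is independent of the shifted tail $B:=u(T_a+\cdot)-u(T_a)$, which is a planar Brownian motion from the origin killed at $\{\Im=-a\}$. Combining It\^o's description of $n_+$ with the product structure $\n_+(\mathrm{d}x,\mathrm{d}y)=n_+(\mathrm{d}y)\,\Pb(X^{R(y)}\in\mathrm{d}x)$ yields $\n_+(T_a<\infty)=\tfrac{1}{2a}$ and identifies $u|_{[0,T_a]}$ under $\n_+(\cdot\mid T_a<\infty)$ precisely as an $\Hb$-excursion stopped at $\{\Im=a\}$. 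Setting $\tilde\tau_{-a}=R(u)-T_a$ and $\tilde\sigma=\sup\{s<\tilde\tau_{-a}:B^2_s=0\}$, the backward functional rewrites as $(B^1_{\tilde\tau_{-a}-s}-B^1_{\tilde\tau_{-a}},\,a+B^2_{\tilde\tau_{-a}-s})_{0\le s\le \tilde\tau_{-a}-\tilde\sigma}$, a function of the last descent $B|_{[\tilde\sigma,\tilde\tau_{-a}]}$ only; meanwhile $x(R(u))=X_{T_a}+B^1_{\tilde\tau_{-a}}$, and $\mathcal{M}_a$ depends only on the above-excursions of $B^2$, which all lie in $[0,\tilde\sigma]$.

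The main obstacle is to identify the law of this backward functional under the BM law $\Eb^B$. Its vertical part is controlled via the observation that $-B^2|_{[\tilde\sigma,\tilde\tau_{-a}]}$ is a $\mathrm{BES}(3)$ from $0$ stopped at first hit of $a$ (standard Doob $h$-transform of BM on $(0,a)$ by $h(x)=x$); the self-duality of $\mathrm{BES}(3)$-stopped-at-$a$ under $(s,x)\mapsto(T-s,a-x)$ --- a form of Williams' time-reversal applied to the last descent, reflecting the symmetry between the harmonic functions $h(x)=x$ and $h(x)=a-x$ of BM on $(0,a)$ --- then identifies $a+B^2_{\tilde\tau_{-a}-\cdot}$ as again a $\mathrm{BES}(3)$ from $0$ stopped at first hit of $a$. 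The time-reversed horizontal BM is an independent BM of the same random length, so the backward functional is itself an $\Hb$-excursion stopped at $\{\Im=a\}$ under $\Eb^B$, with law independent of $X_{T_a}$.

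With this in hand, the master-formula computation of Proposition~\ref{prop:martingale} extends verbatim with the extra factor $G(\text{backward})$ inserted: since $G$ depends only on the continuation of the BM past each above-excursion endpoint and is translation-invariant in the horizontal coordinate, while $\n_+(\mathrm{d}e)|\Delta e|^2$ produces the Lebesgue integral $\mathrm{d}w/(2\pi)$ after integrating over the excursion size $w$, Lebesgue translation-invariance collapses $\int\mathrm{d}w\, g(c+w+\cdots)/(2\pi)$ to $\frac{1}{2\pi}\int g$ and pulls $\Eb^{\Hb}[G]$ out as a marginal from the previous step. One obtains
\[
\Eb^B\!\left[\mathcal{M}_a\,G(\text{backward})\,g(c+B^1_{\tilde\tau_{-a}})\right]
= \frac{a}{\pi}\,\Eb^{\Hb}[G]\int g,
\]
independent of $c=X_{T_a}$; multiplying by the forward factor $\frac{1}{2a}\,\Eb^{\Hb}[F]$ from It\^o's description assembles into $\frac{1}{2\pi}\,\Eb^{\Hb}[F]\,\Eb^{\Hb}[G]\int g$. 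Matching with $\int\frac{\mathrm{d}z}{2\pi}\,g(z)\,\mu_z[FG]$ then gives the claimed factorization for a.e.\ $z$, whence for all $z\ne 0$ by continuity, which is exactly the desired independence and marginal identification.
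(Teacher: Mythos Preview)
Your proof is correct and follows essentially the same route as the paper: lift to $\n_+$, apply the Markov property at $T_a$, use the master/Mecke formula for $\mathcal{M}_a$ together with the translation invariance of Lebesgue measure to decouple the horizontal coordinate, identify the backward imaginary part as a $\mathrm{BES}(3)$ via Williams, identify the forward part as an $\Hb$-excursion via It\^o--Williams, then disintegrate over $x(R(u))$. The paper's computation differs only cosmetically: it applies $g$ directly to the unshifted backward path and reaches the $\mathrm{BES}(3)$ identification by citing Revuz--Yor, Chap.~VII, Cor.~4.6 for $(a+Y_{T_{-a}-s})_{0\le s\le S}$, whereas you go through the forward last descent first and invoke the $(s,x)\mapsto(T-s,a-x)$ symmetry; these are equivalent.

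One point to tighten: the passage from ``Lebesgue-a.e.\ $z$'' to ``all $z\ne 0$'' is not quite the same as in Proposition~\ref{branching gamma}. There the integrand was bounded and dominated convergence sufficed; here the factor $\mathcal{M}_a$ is unbounded, so the scaling limit requires an additional uniform-integrability step. The paper handles this by combining the a.s.\ convergence $\mathcal{M}_{a/z}\to\mathcal{M}_a$ with $\gamma_1[\mathcal{M}_{a/z}]=\gamma_1[\mathcal{M}_a]=1$ and Scheff\'e's lemma to upgrade to $L^1$ convergence. You should insert the same argument.
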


Through the change of measures $\mu_z$, $u$ therefore splits into two independent $\Hb-$excursions starting at $0$ and $z$ respectively.

\begin{proof}
The theorem follows from a similar application of the master formula. Let $f,g: U\rightarrow \R_+$ be two bounded continuous functions. Then 
\begin{align}
\n_+&\bigg(f(u(s), 0\le s\le T_a) g(u(R(u)-s), 0\le s\le S_a) {\mathcal M}_a \bigg) \label{beginning H excursion}\\
&= \n_+\bigg(f(u(s), 0\le s\le T_a)\mathds{1}_{\{T_a<\infty\}} \n_+ \bigg( g(u(R(u)-s), 0\le s\le S_a) {\mathcal M}_a \bigg| \F_{T_a}\bigg)\bigg). \label{H excursion}
\end{align}
 By the master formula,
\begin{align*}
\n_+ &\bigg( g(u(R(u)-s), 0\le s\le S_a) {\mathcal M}_a \bigg| \F_{T_a}\bigg) \\
&= \Eb\left[ \int_0^{T_{-a}} \mathrm{d}L_r \int_{-\infty}^{+\infty} \frac{\mathrm{d}x}{2\pi} \, \Eb\left[ g(x+x'+X_{T_{-a}-s},a+Y_{T_{-a}-s}, 0\le s\le S)\right]_{x'=X_r} \right],
\end{align*}
where $S:=\inf\{s>0,\; Y_{T_{-a}-s} = 0\}$. The change of variables $x+X_r \mapsto x$ provides 
\begin{align*}
& \n_+ \bigg( g(u(R(u)-s), 0\le s\le S_a) {\mathcal M}_a \bigg| \F_{T_a}\bigg) \\
&= \Eb\left[L_{T_{-a}}\right] \int_{-\infty}^{+\infty} \frac{\mathrm{d}x}{2\pi} \, \Eb\left[ g(x+X_{T_{-a}-s},a+Y_{T_{-a}-s}, 0\le s\le S)\right] \\
&= 2a \times \int_{-\infty}^{+\infty} \frac{\mathrm{d}x}{2\pi} \, \Eb\left[g(x+X_{T_{-a}-s},a+Y_{T_{-a}-s}, 0\le s\le S)\right].
\end{align*}
The path $(X_s,\, 0\le s\le T_{-a})$ is conditionally on $Y$ distributed as a linear Brownian motion stopped at time $T_{-a}$ (recall that $T_{-a}$ is a measurable function of $Y$). Since the Lebesgue measure is a reversible measure for the Brownian motion, by time-reversal, the  "law" of $(x+X_{T_{-a}-s},\, 0\le s \le S)$ for $x$ chosen with the Lebesgue measure is the "law" of a linear Brownian motion with initial measure the Lebesgue measure, stopped at time $S$ ($S$ is measurable with respect to $Y$).   Therefore, the integral $ \int_{-\infty}^{+\infty} \frac{\mathrm{d}x}{2\pi} \, \Eb\left[g(\ldots)\right]$ above is also 
\[
\int_{-\infty}^{+\infty} \frac{\mathrm{d}z}{2\pi} \, \Eb\left[g(z+X_s,a+Y_{T_{-a}-s}, 0\le s\le S)\right].
\]
Now we use that $(a+Y_{T_{-a}-s}, 0\le s\le S)$ has the law of a 3-dimensional Bessel process $V$ starting from $0$ and run until its hitting  time of $a$ (call this time $T_a^V$), see Corollary 4.6, Chap. VII of \cite{RY}. Hence the former integral is also
\[
  \int_{-\infty}^{+\infty} \frac{\mathrm{d}z}{2\pi} \, \Eb\left[ g(z+X_s,V_s,\, 0\le s \le T_a^V)\right].
\]
Plugging this into equation (\ref{H excursion}) triggers
\begin{align*}
\n_+&\bigg(f(u(s), 0\le s\le T_a) g(u(R(u)-s), 0\le s\le S_a) {\mathcal M}_a \bigg) \\
&=  2a \times \int_{-\infty}^{+\infty}
\frac{\mathrm{d}z}{2\pi} \, \Eb\left[ g(z+X_s,V_s,\, 0\le s \le T_a^V)\right]\n_+\bigg(f(u(s), 0\le s\le T_a) \mathds{1}_{\{T_a<\infty\}}\bigg)
 \\
&= \int_{-\infty}^{+\infty} \frac{\mathrm{d}z}{2\pi} \, \Eb\left[ g(z+X_s,V_s,\, 0\le s \le T_a^V)\right] \times \n_+\bigg(f(u(s), 0\le s\le T_a) \bigg| T_a<\infty\bigg).
\end{align*}
Moreover, using for example Williams' description of the It\^o's measure $n_+$ (Theorem 4.5,  Chap. XII, in \cite{RY}), the law of $(u(s), 0\le s\le T_a)$ conditionally on $\{T_a<\infty\}$ is the one of $(X_s,V_s,\, 0\le s \le T_a^V)$. We get eventually
\begin{align*}
\n_+&\bigg(f(u(s), 0\le s\le T_a) g(u(R(u)-s), 0\le s\le S_a) {\mathcal M}_a \bigg) \\
&= \int_{-\infty}^{+\infty} \frac{\mathrm{d}z}{2\pi} \, \Eb\left[ g(z+X_s,V_s,\, 0\le s \le T_a^V)\right] \times 
\Eb\left[ f(X_s,V_s,\, 0\le s \le T_a^V)\right].
\end{align*}

Finally, we disintegrate $\n_+$ over $x(R(u))$ to get 
\begin{align*}
    \int_{-\infty}^{+\infty}& \frac{\mathrm{d}z}{2\pi} \, \gamma_{z}\left[ f(u(s), 0\le s\le T_a) g(u(R(u)-s), 0\le s\le S_a) \frac{{\mathcal M}_a}{z^2}\right] \\
    &=\int_{-\infty}^{+\infty} \frac{\mathrm{d}z}{2\pi} \, 
    \Eb\left[ g(z+X_s,V_s,\, 0\le s \le T_a^V)\right] \times 
\Eb\left[ f(X_s,V_s,\, 0\le s \le T_a^V)\right]
    .
\end{align*}
Now multiply $g$ by any measurable function $h:\R\rightarrow \R_+$ of $x(R(u))$ to see that for Lebesgue-almost every $z\in \R$,
\begin{align} \label{eq:change measure 1}
    \gamma_{z}&\left[ f(u(s), 0\le s\le T_a) g(u(R(u)-s), 0\le s\le S_a) \frac{{\mathcal M}_a}{z^2}\right] \\
    &= \Eb\left[ g(z+X_s,V_s,\, 0\le s \le T_a^V)\right] \times 
\Eb\left[ f(X_s,V_s,\, 0\le s \le T_a^V)\right]. \label{eq:change measure 2}
\end{align}
The right-hand side of this equation is a continuous function of $z$. Moreover, by scaling (Lemma \ref{l:scaling}), for $z>0$ the left-hand term can be written
\begin{align} 
\gamma_{z}&\left[ f(u(s), 0\le s\le T_{a}) g(u(R(u)-s), 0\le s\le S_a) \frac{{\mathcal M}_a}{z^2}\right]  \\
&=\gamma_1 \left[ f(zu(s/z^2), 0\le s\le T_{a/z} z^2) g(zu(R(u)-\frac{s}{z^2}), 0\le s\le S_{a/z} z^2) {\mathcal M}_{a/z}\right]. \label{eq:limit}
\end{align}
Since equality (\ref{eq:change measure 1})-(\ref{eq:change measure 2}) holds almost everywhere, it must hold for a dense set of $z>0$. Take $z\searrow 1$ along such a subsequence. By Lemma \ref{lem:continuity} and the observation that $T_{a/z}\rightarrow T_a$, $S_{a/z}\rightarrow S_a$, we get the convergences $(zu(s/z^2), 0\le s\le T_{a/z} z^2) \rightarrow (u(s), 0\le s\le T_a)$ and $(zu(R(u)-\frac{s}{z^2}), 0\le s\le S_{a/z} z^2) \rightarrow (u(R(u)-s), 0\le s\le S_a)$ in $U$. In addition, we know that $\mathcal{M}_{a/z}\rightarrow \mathcal{M}_a$ almost surely and $\gamma_1 \left[{\mathcal M}_{a/z}\right] \rightarrow \gamma_1\left[{\mathcal M}_a\right]$ (both these expressions are equal to $1$ by Proposition \ref{martingale}). By Scheffé's lemma, ${\mathcal M}_{a/z} \underset{z\searrow 1}{\longrightarrow} {\mathcal M}_a$ in $L^1$. When $z\searrow 1$, this turns (\ref{eq:limit}) into
\begin{align*} 
    \gamma_{z}&\left[ f(u(s), 0\le s\le T_a) g(u(R(u)-s), 0\le s\le S_a) \frac{{\mathcal M}_a}{z^2}\right] \\
    &\underset{z\searrow 1}{\longrightarrow} \gamma_1 \left[ f(u(s), 0\le s\le T_a) g(u(R(u)-s), 0\le s\le S_a) {\mathcal M}_a\right]. 
\end{align*}
Therefore (\ref{eq:change measure 1})-(\ref{eq:change measure 2}) holds for $z=1$, and then for any $z$ by scaling. So we proved that for all $z\in \mathbb{R} \backslash\{0\}$,
\begin{align*} 
    \gamma_z &\left[ f(u(s), 0\le s\le T_a) g(u(R(u)-s), 0\le s\le S_a) \frac{{\mathcal M}_a}{z^2}\right] \\
    &= \Eb\left[ g(z+X_s,V_s,\, 0\le s \le T_a^V)\right] \times \Eb\left[ f(X_s,V_s,\, 0\le s \le T_a^V)\right] ,
\end{align*}
which is simply 
\begin{align}
    \mu_{z}&\left[f(u(s), 0\le s\le T_a) g(u(R(u)-s), 0\le s\le S_a) \right] \notag \\
    &= \Eb\left[ g( z+X_s,V_s,\, 0\le s \le T_a^V)\right] \times \Eb\left[ f(X_s,V_s,\, 0\le s \le T_a^V)\right] . \label{H excursion formula}
\end{align}
This proves that under $\mu_z$, the processes $(u(s))_{0\le s\le T_a}$ and $(u(R(u)-s)-z)_{0\le s\le S_a}$ are independent $\Hb-$excursions stopped at the hitting time of $\{\Im(z) = a\}$.  
\end{proof}

\begin{Rk}
This gives a new insight on why the Cauchy process should be hidden in some sense in the law of $\Xi$: under the tilted measure, $u$ splits into two independent $\Hb$--excursions and so the size at some level $a$ of the spine going to infinity is just the difference of two Brownian motions started from infinity taken at their hitting time of $\{\Im(z) = a\}$.
\end{Rk}


\section{The growth-fragmentation process of excursions in $\Hb$}
\label{s:cellsystem}

In this section, we summarize the previous results in the language of the self-similar growth-fragmentations introduced by Bertoin in \cite{B}. The main reference here is \cite{BBCK}, but for the sake of completeness we shall recall in the first paragraph the bulk of the construction of such processes. At the heart of this section lies the calculation of the cumulant function. We recover the cumulant function of \cite{BBCK}, formula (19), in the specific case when $\theta=1$. Recall the definition of $Z$ in Theorem \ref{thm:locally largest}. The process $Z$ starting at $z<0$ is defined to be the negative of the process $Z$ starting at $-z$.



\subsection{Construction of $\overline{\mathbf{X}}$}

We explain how one can define the \emph{cell system} driven by $Z$. We use the Ulam tree $\Ub = \cup_{i=0}^{\infty} \N^i$, where $\N=\{1,2,\ldots\}$, to encode the genealogy of the cells (we write $\N^0 = \{\varnothing\}$, and $\varnothing$ is called the Eve cell). A node $u\in \Ub$ is a list $(u_1,\ldots, u_i)$ of positive integers where $|u|=i$ is the \emph{generation} of $u$. The children of $u$ are the lists in $\N^{i+1}$ of the form $(u_1,\ldots,u_i,k)$, with $k\in \N$. A \emph{cell system} is a family $\Xcal = (\Xcal_u, u\in\Ub)$ indexed by $\Ub$, where $\Xcal_u = (\Xcal_u(a))_{a\ge 0}$ is meant to describe the evolution of the size or mass of the cell $u$ with its age $a$.

To define the cell system driven by $Z$, we first define $\Xcal_{\varnothing}$ as $Z$, started from some initial mass $z\neq 0$, and set $b_{\varnothing}=0$. Observe the realization of $\Xcal_{\varnothing}$ and its jumps. Since $Z$ hits $0$ in finite time, we may rank the sequence of jump sizes and times $(x_1,\beta_1), (x_2,\beta_2),\ldots$ of $-\Xcal_{\varnothing}$ by decreasing order of the $|x_i|$'s. Conditionally on these jump sizes and times, we define the first generation of our cell system $\Xcal_i, i\in\N,$ to be independent with $\Xcal_i$ distributed at $Z$, starting from $x_i$. We also set $b_i = b_{\varnothing}+\beta_i$ for the birth time of the particle $i\in\N$. By recursion, one defines the law of the $n$-th generation given generations $1,\ldots,n-1$ in the same way. Hence the cell labelled by $u=(u_1,\ldots, u_n)\in\N^n$ is born from $u'=(u_1,\ldots, u_{n-1})\in\N^{n-1}$ at time $b_u=b_{u'}+\beta_{u_n}$, where $\beta_{u_n}$ is the time of the $u_n$-th largest jump of $\Xcal_{u'}$, and conditionally on $\Xcal_{u'}(\beta_{u_n})-\Xcal_{u'}(\beta_{u_n}^-)=-y$, $\Xcal_u$ has the law of $Z$ with initial value $y$ and is independent of the other daughter cells at generation $n$. We write $\zeta_u$ for the lifetime of the particle $u$. We may then define, for $a\ge 0$,
\begin{equation}\label{def:Xbar}
\overline{\mathbf{X}}(a):= ( \Xcal_u(a-b_u), \; u\in \Ub \; \text{and} \; b_u\le a <b_u+\zeta_u  ) ,
\end{equation}
as the family of the sizes of all the cells alive at time $a$.  We arrange the elements in $\overline{\mathbf{X}}(a)$  in descending order of their absolute values.


\subsection{The growth-fragmentation process of excursions in $\Hb$}

We restate Theorem \ref{thm:main}. Beware that the signed growth-fragmentation $\overline{\mathbf{X}}$ in this section starts from $z$.
\begin{Thm} \label{thm:growth-frag}
Let $z\in \mathbb{R}\backslash\{0\}$. Under $\gamma_z$,
\[(\overline{\mathbf{X}}(a), a\ge 0) \overset{\text{law}}{=} \left( (\Delta e^{a,+}_i, \; i\ge 1  ), a\ge 0\right).\]
\end{Thm}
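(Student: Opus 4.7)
The strategy is to build a canonical cell decomposition of the Brownian excursion under $\gamma_z$ by iterating the locally-largest selection, and then identify the resulting cell system both with $\overline{\mathbf{X}}$ and with the collection $(\Delta e_i^{a,+}, i\ge 1)_{a\ge 0}$.

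First I would construct the cell system from the excursion $u\sim \gamma_z$ as follows. Set $\mathcal{C}_\varnothing := u$ and declare its Eve cell to have size process $\Xcal_\varnothing(a) := \Xi(a)$, the locally-largest fragment of $u$ given by Proposition \ref{locally largest prop}, with birth time $b_\varnothing = 0$. At each jump time $a_i$ of $\Xi$ (ranked so that $|x_i|$ is decreasing, where $x_i = \Delta e_i$ is the size of the discarded excursion $e_i$ of Theorem \ref{idp}), declare that a new cell $\Xcal_i$ is born at $b_i := a_i$ and set $\mathcal{C}_i := e_i$. Then repeat: inside $\mathcal{C}_i \sim \gamma_{x_i}$, the size of cell $i$ is the locally-largest fragment of $e_i$, and its own jumps spawn cells at generation $2$, and so on recursively along the Ulam tree $\Ub$. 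With the notation \eqref{def:Xbar}, the collection of sizes of cells alive at height $a\ge 0$ in this construction yields a sequence which I will call $\mathbf{Y}(a)$.

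Next I would verify that $\mathbf{Y}$ is distributed as $\overline{\mathbf{X}}$, by induction on generation. For generation $0$, Theorem \ref{thm:locally largest} provides that $\Xcal_\varnothing$ has the law of $Z$ started from $z$. Theorem \ref{idp} then gives the crucial step: conditionally on the jump sizes and times $(x_i, a_i)_{i\ge 1}$ of $\Xcal_\varnothing$, the offspring excursions $e_i = \mathcal{C}_i$ are independent with respective laws $\gamma_{x_i}$. This exactly matches the recursive rule defining the cell system driven by $Z$: conditionally on the jumps of the parent, the daughter cells are independent copies of $Z$ started from the (signed) jump sizes. Assuming the identification at generation $n-1$, apply Theorems \ref{thm:locally largest} and \ref{idp} to each $\mathcal{C}_u \sim \gamma_{\Xcal_u(0)}$ with $|u|=n-1$ to propagate the identification to generation $n$. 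The independence of the different sub-trees given the first $n$ generations is preserved because Theorem \ref{idp} is stated conditionally on the jump data and because the offspring excursions are themselves independent. Hence the full cell system has the same law as the one defined in Section \ref{s:cellsystem}, and so $\mathbf{Y} \stackrel{d}{=} \overline{\mathbf{X}}$.

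Finally I would prove that $\mathbf{Y}(a) = (\Delta e_i^{a,+}(u), i\ge 1)$ almost surely for every $a\ge 0$. Each cell alive at height $a$ in the construction corresponds by definition to an excursion of $u$ above level $a$ (namely the excursion straddling the locally-largest branch of its generating cell at height $a$). Conversely, every excursion $e_i^{a,+}$ above $a$ can be traced back to the root $\mathcal{C}_\varnothing$: at each level where the enclosing excursion splits into two pieces, either $e_i^{a,+}$ lies inside the locally-largest child (and we stay in the same cell) or inside the smaller child (and a descendant cell begins), which yields a finite path in $\Ub$ and assigns $e_i^{a,+}$ to a unique cell. Using Proposition \ref{loop} to exclude pathological loops and the almost sure distinctness of local minima used in Proposition \ref{locally largest prop} to guarantee well-defined locally-largest fragments at every scale, this gives a bijection between the excursions above $a$ and the cells alive at $a$, respecting sizes. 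Since both $\mathbf{Y}(a)$ and $(\Delta e_i^{a,+})_{i\ge 1}$ are ordered by decreasing magnitude, the identification is exact.

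The main obstacle will be the recursive construction: one must check that the iterated conditioning of Theorem \ref{idp} along all generations of the Ulam tree is legitimate, and that the bijection in the last step is indeed exhaustive — that is, that no excursion above $a$ is missed and the set of cells alive at $a$ is truly countable with finitely many of magnitude exceeding any $\varepsilon>0$. The absence of local explosion was announced as a consequence of the theorem in the remarks following Theorem \ref{thm:main}, and will follow from the bijection together with the classical fact that under $\gamma_z$ (hence under $\n_+$) only finitely many excursions above $a$ have size exceeding $\varepsilon$ in absolute value.
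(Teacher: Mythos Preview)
Your overall strategy matches the paper's: build the cell system from $u$ by iterated locally-largest selection, identify its law with $\overline{\mathbf{X}}$ via Theorems~\ref{thm:locally largest} and~\ref{idp}, and then argue that the cells alive at height $a$ are exactly the excursions above level $a$. The first two steps are essentially as in the paper and are fine.

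The gap is in the third step. You assert that tracing an excursion $e_i^{a,+}$ down from the root ``yields a finite path in $\Ub$'', but this is precisely the nontrivial point and you give no argument for it. A priori, the sequence of levels at which the target excursion branches off from the current locally-largest could accumulate at some $a_\infty<a$, producing an infinite lineage that never reaches level $a$. Your closing paragraph flags exhaustiveness as an obstacle but misdiagnoses it: the issue is not countability or local explosion, and it does not follow from the fact that only finitely many excursions above $a$ have size exceeding $\varepsilon$.

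The paper closes this gap with a connectedness argument. Fix $t$ with $y(t)>a$ and set $\mathcal{A}=\{a'\in[0,a]:e_{a'}^{(t)}\text{ belongs to the genealogy at level }a'\}$. Openness of $\mathcal{A}$ is easy: the locally-largest fragment inside any cell coincides with $e^{(t)}$ for a positive stretch above the cell's birth level. The substantive step is closedness. If $a_n\nearrow a_\infty$ with $a_n\in\mathcal{A}$, one uses that $\Delta e_{a_\infty^-}^{(t)}\neq 0$ (Proposition~\ref{loop}) together with the c\`adl\`ag property of $F^{(t)}$ (Lemma~\ref{l:cadlag}) to find $a_N$ close enough to $a_\infty$ that on $[a_N,a_\infty)$ every jump of $F^{(t)}$ is strictly smaller in absolute value than $|F^{(t)}|$ itself; hence $e^{(t)}$ stays on the locally-largest branch inside $e_{a_N}^{(t)}$ all the way to $a_\infty$, so $e_{a_\infty}^{(t)}$ lies in the same cell as $e_{a_N}^{(t)}$ and $a_\infty\in\mathcal{A}$. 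Connectedness then gives $\mathcal{A}=[0,a]$, which is exactly the finiteness of your path in $\Ub$.
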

\begin{proof} 
Let $u\in U^+$ be such that the locally largest excursion described in Subsection \ref{Sec:locally-largest} is well-defined, \emph{i.e.} $u$ has no loop above any level, has distinct local minima, and no splitting in two equal sizes (this set of excursions has full probability under $\gamma_z$). This gives our Eve cell process. The independence of the daughter excursions given their size at birth has already been proved in Theorem \ref{idp}, and we have taken $Z$ according to the law of the largest fragment in Theorem \ref{thm:locally largest}, so it remains to prove that every excursion can be found in the genealogy of $\overline{\mathbf{X}}$ as constructed in the former section.

For $a\ge 0$, we denote by $\overline{\mathbf{X}}^{exc}(a)$ the set of all excursions associated to the sizes in $\overline{\mathbf{X}}(a)$. Let $0\le t\le R(u)$ such that $\Im(u(t))>a$. We want to show that $e_a^{(t)}\in\overline{\mathbf{X}}^{exc}(a)$. Set
\[ \mathcal{A} = \left\{a'\in[0,a], \; e_{a'}^{(t)} \in \overline{\mathbf{X}}^{exc}(a')\right\}.\]
Then $\mathcal{A}$ is an interval containing $0$. 
\begin{itemize}
    \item[$\bullet$] $\mathcal{A}$ is open in $[0,a]$. Let $a'\in\mathcal{A}$ with $a'<a$. Write $e_{b}^{(\tau^{\bullet})}, b\ge a',$ for the locally largest excursion inside $e_{a'}^{(t)}$. Then for small enough $\varepsilon>0$, $e_{a'+\varepsilon}^{(t)} = e_{a'+\varepsilon}^{(\tau^{\bullet})}$. Indeed, the first height $b\ge a'$ when $e_{b}^{(t)} \ne e_{b}^{(\tau^{\bullet})}$ is equal to  the minimum of $y(s)$ for $s$ between $t$ and $\tau^{\bullet}$, and so it is stricly above $a'$. This implies that $a'+\varepsilon \in\mathcal{A}$ since $e_{a'+\varepsilon}^{(\tau^{\bullet})}\in\overline{\mathbf{X}}^{exc}(a'+\varepsilon)$ as the locally largest excursions are in the genealogy.
    \item $\mathcal{A}$ is closed in $[0,a]$. Let $a_n$ be a sequence of elements in $\mathcal{A}$ increasing to $a_{\infty}$. For all $\varepsilon>0$, there exists $\delta>0$ such that:
    \[ \forall a'\in(a_{\infty}-\delta,a_{\infty}), \quad |\Delta e_{a'}^{(t)} - \Delta e_{a_{\infty}^-}^{(t)}| < \varepsilon.\]
    Then for all $a_1,a_2\in(a_{\infty}-\delta,a_{\infty})$,
    \[ |\Delta e_{a_1}^{(t)} - \Delta e_{a_2}^{(t)}| \le |\Delta e_{a_1}^{(t)} - \Delta e_{a_{\infty}^-}^{(t)}| + |\Delta e_{a_2}^{(t)} - \Delta e_{a_{\infty}^-}^{(t)}| < 2 \varepsilon.\]
    Take $\varepsilon=|\Delta e_{a_{\infty}^-}^{(t)}|/4$ and $N$ large enough so that $a_N\in (a_{\infty}-\delta,a_{\infty})$. Then the excursion $e_{a_N}^{(t)}$ is such that for all $a'\in[a_N,a_{\infty})$, $e_{a'}^{(t)}$ is taken along the locally largest excursion inside $e_{a_N}^{(t)}$. Indeed, it follows from these inequalities that for all $a_1,a_2 \in(a_{\infty}-\delta,a_{\infty})$, $|\Delta e_{a_1}^{(t)} - \Delta e_{a_2}^{(t)}|\le\frac12 |\Delta e_{a_{\infty}^-}^{(t)}|< |\Delta e_{a_1}^{(t)}|$ , then take $a_1=a'$ and $a_2 \nearrow a'$. This entails that $a_{\infty}\in\mathcal{A}$.
\end{itemize}
By connectedness $\mathcal{A}$ must be $[0,a]$. This concludes the proof.
\end{proof}


\subsection{The cumulant function}

The process $\overline{\mathbf{X}}$ is not a growth-fragmentation in the sense of \cite{BBCK} because it carries negative masses. We show in this section that if one discards all cells with negative masses together with their progeny, one obtains one of the growth-fragmentation processes studied in \cite{BBCK}. \\

Formally, let $\mathbf{X}$ defined by \eqref{def:Xbar} where we only consider  the $u$'s such that $\Xcal_v(b_v) >0$ for all ancestors $v$ of $u$ (including itself) in the Ulam tree.  The process $\mathbf{X}$ is a growth-fragmentation in the sense of \cite{BBCK}. It is characterized by its self-similarity index $\alpha=-1$ and its cumulant function defined for $q\ge 0$, by
\[\kappa(q) := \Psi(q) + \int_{-\infty}^0 (1-\mathrm{e}^y)^q\Lambda(\mathrm{d}y),\]
where $\Lambda$ denotes the Lévy measure of the Lévy process $\xi$.

The following proposition is Proposition 5.2 of \cite{BBCK} in the case $\theta=1$, $\hat{\beta} = 1$ and $\gamma = \hat{\gamma} = 1/2$ with the additional factor $2$ (corresponding to a time change). 
\begin{Prop}\label{p:cumulant}
Let $\omega_+ =\omega=  5/2$, and $\Phi^+(q) = \kappa(q+\omega_+)$ for $q\ge 0$. Then $\Phi^+$ is the Laplace exponent of a symmetric Cauchy process conditioned to stay positive, namely
\begin{equation}
    \Phi^+(q) = -2\, \frac{\Gamma(\frac12-q)\Gamma(\frac32+q)}{\Gamma(-q)\Gamma(1+q)}, \quad -\frac32 < q < \frac12.
\end{equation}

Furthermore, the associated growth-fragmentation $X$ has no killing and its cumulant function is 
\begin{equation} \label{cumulant}
    \kappa(q) = -2 \, \frac{\cos(\pi q)}{\pi} \Gamma(q-1)\Gamma(3-q), \quad 1<q<3.
\end{equation}
\end{Prop}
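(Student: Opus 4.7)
The plan is to compute $\kappa$ directly from its defining formula
\[
\kappa(q)=\Psi(q)+\int_{-\infty}^{0}(1-e^y)^q\,\Lambda(\mathrm{d}y),
\]
reading off the drift $-4/\pi$ and the L\'evy measure
\[
\Lambda(\mathrm{d}y)=\frac{2}{\pi}\,\frac{e^{-y}}{(e^y-1)^2}\,\mathbf{1}_{\{y>-\ln 2\}}\,\mathrm{d}y
\]
from Theorem~\ref{thm:locally largest}. Since $\Psi(0)=0$, the L\'evy process $\xi$ carries no killing term and the resulting cumulant has no constant contribution, which settles the "no killing" assertion for $\mathbf{X}$.

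After the substitution $w=e^y$, the $\Psi$-part becomes an integral over $(1/2,\infty)$ of $(w^q-1-q(w-1))/(w^2(w-1)^2)$ and the children term becomes an integral over $(1/2,1)$ of $(1-w)^{q-2}/w^2$. Splitting the former at $w=1$ and grouping with the latter on $(1/2,1)$ gives
\[
\kappa(q)=-\frac{4q}{\pi}+\frac{2}{\pi}\int_{1/2}^{1}\frac{w^q-1-q(w-1)+(1-w)^q}{w^2(1-w)^2}\,\mathrm{d}w+\frac{2}{\pi}\int_{1}^{\infty}\frac{w^q-1-q(w-1)}{w^2(w-1)^2}\,\mathrm{d}w.
\]
A M\"obius-type change of variables ($t=(1-w)/w$ on the left piece, $t=w-1$ on the right) brings both integrands into the form $t^{a-1}(1+t)^{-c}$, i.e.\ into truncated Beta integrals. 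With careful Taylor-subtraction bookkeeping, the boundary contributions at $w=1/2$ cancel against the drift $-4q/\pi$, and the remaining complete Beta integrals evaluate to products of Gamma functions. One application of the reflection formula $\Gamma(z)\Gamma(1-z)=\pi/\sin(\pi z)$ then collapses the whole expression into the announced
\[
\kappa(q)=-\frac{2\cos(\pi q)}{\pi}\,\Gamma(q-1)\Gamma(3-q),\quad 1<q<3.
\]

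The Laplace exponent $\Phi^+$ is obtained by substituting $q\mapsto q+5/2$. Using $\cos(\pi(q+5/2))=-\sin(\pi q)$ gives $\Phi^+(q)=(2/\pi)\sin(\pi q)\,\Gamma(q+3/2)\Gamma(1/2-q)$, and a second use of the reflection formula in the form $\sin(\pi q)/\pi=-1/(\Gamma(-q)\Gamma(1+q))$ rewrites this in the stated shape. Identification with the Laplace exponent of a symmetric Cauchy process conditioned to stay positive then follows from the Caballero--Chaumont Lamperti representation \cite{CC} applied to the stability index $\alpha=1$. As a shortcut bypassing the Beta-integral computation altogether, one may instead check that our pair $(\Psi,\Lambda)$ matches, up to the announced time-change factor of $2$, the parameters $\theta=1$, $\hat\beta=1$, $\gamma=\hat\gamma=1/2$ in \cite[Proposition~5.2]{BBCK}, from which the entire statement is immediate.

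The main obstacle is the explicit Beta-integral evaluation. The truncation of $\Lambda$ at $y=-\ln 2$—which encodes the locally-largest selection rule, since the leading sibling can only be overtaken when the other one crosses half the total mass—destroys the direct reduction to a single classical Beta function and forces a careful pairing of the incomplete-Beta boundary terms at $w=1/2$ with the linear drift counterterm $-4q/\pi$.
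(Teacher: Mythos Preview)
Your route is genuinely different from the paper's. You attack $\kappa(q)$ head-on via the substitution $w=e^y$ and aim for Beta integrals; the paper instead computes the shift $\kappa(q+\omega)-\kappa(\omega)$, recognises it (after the key change of variables $e^x=1-e^y$ on the interval $(-\ln 2,0)$, which glues the truncated piece onto the missing tail $(-\infty,-\ln 2)$ and produces a full-line L\'evy--Khintchin integral) as twice the Caballero--Chaumont exponent $\Phi^\uparrow$ of the symmetric Cauchy process conditioned to stay positive, and only afterwards pins down the constant by checking $\kappa(2)=-2/\pi$ directly and hence $\kappa(5/2)=0$. The virtue of the shift trick is precisely that the artificial $-\ln 2$ truncation coming from the locally-largest rule disappears without ever generating incomplete integrals.

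In your approach the crucial step is asserted rather than carried out. After your M\"obius substitution $t=(1-w)/w$ the left integral lands on $t\in(0,1)$, not $(0,\infty)$, so what you obtain is an \emph{incomplete} Beta integral; completing it adds an integral over $t\in(1,\infty)$ (equivalently $w\in(0,1/2)$), not merely a boundary term, and it is not transparent that this extra piece together with the analogous debris from the right integral telescopes against the drift $-4q/\pi$. It may well do, but the sentence ``the boundary contributions at $w=1/2$ cancel against the drift'' is a promissory note, not a proof; you yourself flag this as ``the main obstacle''. Your treatment of the passage from $\kappa$ to $\Phi^+$ via $\cos(\pi(q+5/2))=-\sin(\pi q)$ and the reflection formula is correct and matches the paper's final step.

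On the proposed shortcut: invoking \cite[Proposition~5.2]{BBCK} directly presupposes that our $\Psi$ coincides with the locally-largest exponent in their parametric family at $\theta=1$, $\gamma=\hat\gamma=1/2$. Verifying that identification is itself a computation of the same flavour as the one you are trying to bypass, so this is less of a shortcut than it appears; it is exactly what the paper's proof establishes.
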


\begin{Rk}
In \cite{BBCK}, the roots of $\kappa$ pave the way to remarkable martingales. It should not come as a surprise that in our case these roots happen to be $\omega_-=\frac32$ and $\omega_+=\frac52$. Indeed, the $h$-transform for the symmetric Cauchy process conditioned to stay positive (resp. conditioned to hit $0$ continuously) is given by $x\mapsto x^{1/2}$ (resp. $x\mapsto x^{-1/2}$). This turns the martingale in Proposition \ref{prop:martingale} into the sum over all masses in $\mathbf{X}$ to the power $\omega_+=2+\frac12$, and $\omega_- = 2-\frac12$ respectively, which are exactly the quantities considered in \cite{BBCK}.
\end{Rk}

\begin{proof}
The strategy is as follows. In view of Theorem 5.1 in \cite{BBCK}, we first compute $\kappa(q+\omega)-\kappa(\omega)$ and we put it in a L\'evy-Khintchin form so as to retrieve the Laplace exponent of the L\'evy process involved in the Lamperti representation of a Cauchy process conditioned to stay positive, which is known from \cite{CC}. We then show that $\kappa(\omega)=0$, and therefore deduce the expression of $\kappa$. 

Recall first that by definition 
\[ \kappa(q) = \Psi(q) + \int_{-\infty}^0 (1-\mathrm{e}^y)^q\Lambda(\mathrm{d}y),\]
with $\Psi$ given by (\ref{Lapl loc largest}). In fact, we rather use formula (\ref{Lapl indicator}), which is closer to \cite{CC}:
\[
\Psi(q) = -\frac2\pi \left(\ln(2)+\frac32\right)q + \frac2\pi \int_{y>-\ln(2)} \left(\mathrm{e}^{q y}-1-q(\mathrm{e}^y-1)\mathds{1}_{|\mathrm{e}^y-1|<1}\right) \frac{\mathrm{e}^{-y}\mathrm{d}y}{(\mathrm{e}^y-1)^2}. 
\]
Let $-\frac32<q<\frac12$. Then 
\begin{align*}
\frac\pi2&(\kappa(q+\omega)-\kappa(\omega)) \\
&=-\left(\ln(2)+\frac32\right)q+\int_{y>-\ln(2)} \left(\mathrm{e}^{(q+\omega) y}-\mathrm{e}^{\omega y}-q(\mathrm{e}^y-1)\mathds{1}_{|\mathrm{e}^y-1|<1}\right) \frac{\mathrm{e}^{-y}\mathrm{d}y}{(\mathrm{e}^y-1)^2} \\
& \qquad + \int_{-\ln(2)}^0 \left((1-\mathrm{e}^y)^{q+\omega}-(1-\mathrm{e}^y)^{\omega}\right)\frac{\mathrm{e}^{-y}\mathrm{d}y}{(\mathrm{e}^y-1)^2}.
\end{align*}
Performing the change of variables $e^x=1-e^y$ in the second integral entails
\begin{align*}
\frac\pi2&(\kappa(q+\omega)-\kappa(\omega)) \\
&= -\left(\ln(2)+\frac32\right)q+\int_{y>-\ln(2)} \left(\mathrm{e}^{(q+\omega) y}-\mathrm{e}^{\omega y}-q(\mathrm{e}^y-1)\mathds{1}_{|\mathrm{e}^y-1|<1}\right) \frac{\mathrm{e}^{-y}\mathrm{d}y}{(\mathrm{e}^y-1)^2} \\
& \qquad + \int_{-\infty}^{-\ln(2)} \left(\mathrm{e}^{(q+\omega)x}-\mathrm{e}^{\omega x}\right)\frac{\mathrm{e}^{-x}\mathrm{d}x}{(\mathrm{e}^x-1)^2} \\
&= -\left(\ln(2)+\frac32\right)q+\int_{-\infty}^{+\infty} \left(\mathrm{e}^{(q+\omega) y}-\mathrm{e}^{\omega y}-q\mathrm{e}^{\omega y}(\mathrm{e}^y-1)\mathds{1}_{|\mathrm{e}^y-1|<1}\right) \frac{\mathrm{e}^{-y}\mathrm{d}y}{(\mathrm{e}^y-1)^2} \\
& \qquad + q \int_{y>-\ln(2)} \left( \mathrm{e}^{\omega y}(\mathrm{e}^y-1)-(\mathrm{e}^y-1)\right)\mathds{1}_{|\mathrm{e}^y-1|<1}\frac{\mathrm{e}^{-y}\mathrm{d}y}{(\mathrm{e}^y-1)^2} \\
& \qquad + q \int_{-\infty}^{-\ln(2)}\mathrm{e}^{\omega y}(\mathrm{e}^y-1)\underbrace{\mathds{1}_{|\mathrm{e}^y-1|<1}}_{=1}\frac{\mathrm{e}^{-y}\mathrm{d}y}{(\mathrm{e}^y-1)^2} \\
&= -\left(\ln(2)+\frac32\right)q+\int_{-\infty}^{+\infty} \left(\mathrm{e}^{q y}-1-q(\mathrm{e}^y-1)\mathds{1}_{|\mathrm{e}^y-1|<1}\right) \frac{\mathrm{e}^{(\omega-1)y}\mathrm{d}y}{(\mathrm{e}^y-1)^2} \\
& \qquad +  q \int_{-\ln(2)}^{\ln(2)} \left( \mathrm{e}^{\omega y}-1\right)\frac{\mathrm{e}^{-y}\mathrm{d}y}{\mathrm{e}^y-1} + q \int_{-\infty}^{-\ln(2)}\mathrm{e}^{\omega y}\frac{\mathrm{e}^{-y}\mathrm{d}y}{\mathrm{e}^y-1}.
\end{align*}
Because $\omega=5/2$, this has the form of $\Phi^{\uparrow}$ of Corollary 2 in \cite{CC} for the symmetric Cauchy process ($\alpha=1$ and $\rho=1/2$), apart from a possible extra drift. We now show that the drifts do in fact coincide. Let $I$ and $J$ denote the last two integrals in the above expression. Using the change of variables $x=e^y$, we get
\begin{align*}
    I &= \int_{1/2}^2 \frac{x^{5/2}-1}{x^2(x-1)} \mathrm{d}x, \\
    J &= \int_0^{1/2} \frac{\sqrt{x}}{x-1} \mathrm{d}x.
\end{align*}
Now 
\[I= \int_{1/2}^2 \frac{x^{5/2}-x^2}{x^2(x-1)} \mathrm{d}x + \int_{1/2}^2 \frac{x^2-1}{x^2(x-1)} \mathrm{d}x = \int_{1/2}^2 \frac{\sqrt{x}-1}{x-1} \mathrm{d}x + \underbrace{\int_{1/2}^2 \frac{x+1}{x^2} \mathrm{d}x}_{:=I_1}, \] 
and
\[J= \int_0^{1/2} \frac{\sqrt{x}-1}{x-1} \mathrm{d}x + \underbrace{\int_0^{1/2} \frac{1}{x-1} \mathrm{d}x}_{:=J_1}.\]
One can check that $I_1 + J_1 = \ln(2)+\frac32$. Therefore the linear term in the above expression of $\kappa(q+\omega)-\kappa(\omega)$ is precisely 
\[a_+ = \frac{2}{\pi}\int_0^{2} \frac{\sqrt{x}-1}{x-1} \mathrm{d}x = \frac{2}{\pi}\int_0^{1} \frac{\sqrt{1+u}-1}{u} \mathrm{d}u - \frac{2}{\pi}\int_0^{1} \frac{\sqrt{1-u}-1}{u} \mathrm{d}u,\]
which is exactly $a_+ = 2a^{\uparrow}$ as defined in Corollary 2 of \cite{CC} for the symmetric Cauchy process. Note that there is a sign error in formula (17) of the latter paper. Hence Corollary 2 of \cite{CC} triggers that $\kappa(q+\omega)-\kappa(\omega)$ is twice the Laplace exponent of a Cauchy process conditioned to stay positive, and now by \cite{KP}, we deduce
\[\kappa(q+\omega)-\kappa(\omega) = -2\, \frac{\Gamma(\frac12-q)\Gamma(\frac32+q)}{\Gamma(-q)\Gamma(1+q)}, \quad -\frac32 < q < \frac12.\]

Taking $q=-1/2$ in this formula, one sees that $\kappa(2)-\kappa(5/2) = -\frac2\pi$. Yet one can easily compute $\kappa(2)$ from the definition of $\kappa$. Simple calculations left to the reader actually lead to $\kappa(2) = -\frac2\pi$, and thus $\kappa(5/2)=0$. Finally, we recovered the expression of $\Phi^+$, and using Euler's reflection formula 
\[\kappa(q) = -2 \, \frac{\cos(\pi q)}{\pi} \Gamma(q-1)\Gamma(3-q), \quad 1<q<3.\]
\end{proof}

%
%
\section{Convergence of the derivative martingale} \label{s:deriv}


Recall the construction of the cell system in Section \ref{s:cellsystem} and that for $u\in \Ub$, $|u|$ denotes its generation. The collection $(\ln(|\Xcal_u(0)|),\, u\in \Ub)$ defines a branching random walk, see \cite{ZShi} for a general reference on branching random walks. We will work under the associated filtration 
\[
\Gscr_n := \sigma\left(\Xcal_u, \; |u|\le n\right), \quad n\ge 0.
\]

\noindent By construction, with the notation of Theorem \ref{thm:locally largest}, one has for all suitable measurable function $f$ such that $f(0)=0$, under $\gamma_z$,
$$
\sum_{|u|=1} f(\Xcal_u(0))
=
\sum_{a\ge 0} f\left(z{\mathrm e}^{\xi(a)}- z{\mathrm e}^{\xi(a-)} \right).
$$

\noindent  From there, one can check by computations (making use of the expression of the cumulant function found in \eqref{cumulant}) that
$$
\gamma_z\left[\sum_{|u|=1} |\Xcal_u(0)|^2  \right]=z^2, \qquad \gamma_z\left[\sum_{|u|=1} |\Xcal_u(0)|^2 \ln(|\Xcal_u(0)|) \right]=0 
$$

\noindent which implies that the martingale $\Mcal(n):=\sum_{|u|=n+1} |\Xcal_u(0)|^2$ is the critical martingale for the branching random walk. In this case, $\Mcal(n)$ converges to $0$. In order to have a non-trivial limit, one needs to consider the so-called derivative martingale defined by 
\[
\Dcal(n) := - \sum_{|u|=n+1} \ln(|\Xcal_u(0)|) |\Xcal_u(0)|^2, \quad n\ge 0.
\]

\noindent The aim of this section is to show that this limit is twice the duration of the excursion. 

First notice that the duration of the excursion is measurable with respect to the cell system, or equivalently to the growth-fragmentation $\overline{\mathbf{X}}$. Indeed, the number of excursions above level $a$  with height greater than $\varepsilon>0$ is measurable with respect to  $\overline{\mathbf{X}}$, hence the local time of the excursion at level $a$ is also measurable, and so is the total duration of the excursion by the occupation times formula. Then, by L\'evy's martingale convergence theorem, one  would only have to show that the derivative martingale is the conditional expectation of the duration with respect to the  filtration $\Gscr_n $. But the duration is not integrable, hence this strategy cannot work. Instead, one needs to use a truncation procedure  introduced in \cite{BiKy}. 

Let $C>0$ and denote by $\Ub^{(C)}$ the set of labels obtained from $\Ub$ by killing all the cells (with their progeny) when their size is larger than $C$ in absolute value.

\begin{Lem} \label{lem:time spent C}
Let $z\ne 0$ and
\[ T_C:= \int_0^{R(u)}\mathds{1}_{\{\forall 0\le b\le y(t), \, |\Delta e_b^{(t)}| < C\}} \mathrm{d}t,\]
be the amount of time spent by excursions with size between $-C$ and $C$. Then
\[
\gamma_z(T_C) 
=
\pi z^2 \Rcal_C(z/2)  \mathds{1}_{\{|z| < C\}}
\]

\noindent where $\Rcal_C(z):= -\frac{1}{2\pi}\left( \ln \big|\sqrt{(1+{\tilde z}) / (1-{\tilde z}}) -1 \big| - \ln \big|\sqrt{(1+\tilde z ) / (1-\tilde z)} +1 \big|\right)$, with $\tilde z={2 z\over C}$, is the Green function at $0$ of the Cauchy process in $(-{C\over 2},{C\over 2})$, see \cite{daviaud}.  
\end{Lem}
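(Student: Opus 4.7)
The plan is to first compute $\n_+\!\left(f(x(R(u)))\,T_C\right)$ for an arbitrary nonnegative measurable $f\colon\R\to\R_+$ via Bismut's description (Proposition~\ref{Bismut}), and then invert the disintegration formula \eqref{disintegration} to extract $\gamma_z(T_C)$. By Fubini and Bismut's formula, writing $\alpha=y(t)$ and using that conditionally on $y(t)=\alpha$ the two halves $u^{t,\leftarrow}$ and $u^{t,\rightarrow}$ are independent planar Brownian motions killed upon hitting $\{\Im(z)=-\alpha\}$, one obtains
\[
\n_+\!\left(f(x(R(u)))\,T_C\right)=\overline{\n}_+\!\left(f(x(R(u)))\,\mathds{1}_{\{\forall\,0\le b\le y(t),\,|\Delta e_b^{(t)}|<C\}}\right)=\int_0^{\infty}\!\mathrm{d}\alpha\,\Eb\!\left[f(\eta_\alpha)\,\mathds{1}_{\{\sigma_C>\alpha\}}\right],
\]
where the height-reversed process is
\[
\eta_c:=F^{(t)}(\alpha-c)=u^{t,\rightarrow}(T_{\alpha-c}^{t,\rightarrow})-u^{t,\leftarrow}(T_{\alpha-c}^{t,\leftarrow}),\qquad\sigma_C:=\inf\{c\ge 0:|\eta_c|\ge C\}.
\]

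The crucial identification is that $\eta$ is a symmetric Cauchy process of Laplace exponent $\psi(\lambda)=-2|\lambda|$: indeed, it is the difference of two independent standard symmetric Cauchy processes, each obtained as a linear Brownian motion evaluated along the hitting-time subordinator of an independent Brownian motion (see \cite{RY}, Chap.~III, and the argument already used in the proof of Proposition~\ref{loop} and in Theorem~\ref{thm:locally largest}). Moreover $\eta_0=0$, $\eta_\alpha=x(R(u))$, and the event $\{\forall\,0\le b\le y(t),\,|\Delta e_b^{(t)}|<C\}$ is exactly $\{\sigma_C>\alpha\}$. A second application of Fubini and the occupation-time formula then gives
\[
\int_0^\infty\!\mathrm{d}\alpha\,\Eb\!\left[f(\eta_\alpha)\,\mathds{1}_{\{\sigma_C>\alpha\}}\right]=\Eb\!\left[\int_0^{\sigma_C}\!f(\eta_c)\,\mathrm{d}c\right]=\int_{-C}^{C}\!G^\eta_C(0,z)\,f(z)\,\mathrm{d}z,
\]
where $G^\eta_C(0,\cdot)$ is the Green function of $\eta$ killed upon exiting $(-C,C)$. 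Writing $\eta\overset{\mathrm{d}}{=}2\widetilde\eta$ with $\widetilde\eta$ a standard symmetric Cauchy, the change of variables $z=2z'$ produces $G^\eta_C(0,z)=\tfrac12\,\Rcal_C(z/2)$, where $\Rcal_C$ is the Green function at $0$ of the standard Cauchy process killed upon exiting $(-C/2,C/2)$.

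Combining with the disintegration formula \eqref{disintegration} yields, for any nonnegative measurable $f$,
\[
\int_\R\!\frac{\mathrm{d}z}{2\pi z^2}\,f(z)\,\gamma_z(T_C)=\int_{-C}^{C}\!\tfrac12\,\Rcal_C(z/2)\,f(z)\,\mathrm{d}z,
\]
whence $\gamma_z(T_C)=\pi z^2\,\Rcal_C(z/2)\,\mathds{1}_{\{|z|<C\}}$ for Lebesgue-a.e. $z\in\R$. Promoting the identity to every $z\neq 0$ is then a scaling matter: the relation $T_C(u^{(z)})=z^2\,T_{C/|z|}(u)$ coming from Lemma~\ref{l:scaling} reduces the statement to its validity in the variable $C$ at $z=1$, and the monotonicity of $C\mapsto T_C$ together with the continuity of $C\mapsto\Rcal_C(1/2)\mathds{1}_{\{C>1\}}$ promotes the almost-everywhere identity to every $z\neq 0$, in the spirit of the extension arguments used for Propositions~\ref{branching gamma} and~\ref{prop:martingale}.

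The main obstacle is the bookkeeping in the Cauchy-process identification step, in particular tracking the factor of $2$ arising from the doubled Cauchy process $\eta=2\widetilde\eta$ that ultimately yields the argument $z/2$ in $\Rcal_C$, and cleanly matching the two Fubini steps with the occupation density against the correct killed Green function.
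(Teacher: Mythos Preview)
Your proposal is correct and follows essentially the same route as the paper: apply Bismut's description to compute $\n_+(f(x(R(u)))\,T_C)$, identify the difference of the two Brownian motions evaluated along their hitting-time subordinators as the double of a symmetric Cauchy process, rewrite the resulting time integral as the Green function of this process killed on exiting $(-C,C)$, and disintegrate. The only slight variation is in the final ``almost-every $z$ to every $z$'' step: the paper simply invokes ``the usual continuity arguments'' (as in Propositions~\ref{branching gamma} and~\ref{prop:martingale}), whereas you reduce via scaling to the variable $C$ and exploit the monotonicity of $C\mapsto T_C$, which is a clean and perfectly valid alternative.
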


\begin{proof}
Lemma \ref{lem:time spent C} follows from an application of Bismut's description of $\n_+$. Indeed, if $f:\R \rightarrow \R_+$ is a nonnegative measurable function, then by Proposition \ref{Bismut},
\begin{align*}
\n_+&\left(\int_0^{R(u)}\mathds{1}_{\{\forall 0\le b\le y(t), \, |\Delta e_b^{(t)}| < C\}} f(x(R(u)) \mathrm{d}t\right) 
\\
&= \int_0^{\infty} \mathrm{d}a \,\Eb\left[ \mathds{1}_{\{\forall 0\le b\le a, \, |X'(T'_{-b})-X(T_{-b})| < C\}} f(X'(T'_{-a})-X(T_{-a}))\right]
\end{align*}

\noindent where under $\Pb$, $(X,Y)$ and $(X',Y')$ are independent planar Brownian motions starting from $0$, and $T_{-a}$ and $T'_{-a}$ denote their respective hitting times of $\{\Im(z)=-a\}$. Observe that $a\mapsto X'(T'_{-a})-X(T_{-a})$ is the double of a Cauchy process (see for instance Proposition 3.11 of \cite{RY}, Chap. III). By definition of the Green's function, we get
\[
\n_+\left( \int_0^{R(u)}\mathds{1}_{\{\forall 0\le b\le y(t), \, |\Delta e_b^{(t)}| < C\}} f(x(R(u)) \mathrm{d}t \right)
= \int_{-C/2}^{C/2} f(2z)\Rcal_C(z)\mathrm{d} z.
\]

\noindent We deduce that
\[
\n_+\left(T_C f(x(R(u))\right) 
=  
\frac12 \int_{-C}^{C} f(z)\Rcal_C(z/2)\mathrm{d} z
=  \int_{-C}^{C} \frac{f(z)}{2 \pi z^2} \pi z^2 \Rcal_C(z/2)\mathrm{d} z.
\]

\noindent Therefore, by disintegration over $x(R(u))$, for Lebesgue--almost every $z$, we get
\[
\gamma_z\left(T_C\right) 
=
\pi z^2 \Rcal_C(z/2)\mathds{1}_{\{|z|<C\}}.
\]
The fact that it holds for all $z$ is obtained through the usual continuity arguments.
\end{proof}

\bigskip

Let 
\[
\Dcal^{(C)}(n) := \pi \sum_{u \in \Ub^{(C)}: \, |u|=n+1} \Rcal_C(\Xcal_u(0)/2) |\Xcal_u(0)|^2, \quad n\ge0.
\]
\begin{Cor} \label{cor:truncated derivative}
The following identity holds for all $n\ge 0$ and $z\ne 0$:
\[
\gamma_z\left(T_C \, \Big| \, \Gscr_n\right) 
=
 \Dcal^{(C)}(n).
\]
Consequently, $(\Dcal^{(C)}(n), n\ge 0)$ is a uniformly integrable $(\Gscr_n)_{n\ge 0}$--martingale.
\end{Cor}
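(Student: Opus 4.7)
The plan is to identify $\Dcal^{(C)}(n)$ with the conditional expectation $\gamma_z(T_C\mid\Gscr_n)$; the corollary will then follow at once, as the tower property gives the martingale structure and the integrability of $T_C$ (direct from Lemma \ref{lem:time spent C}, since $\pi z^2\Rcal_C(z/2)$ is finite for $|z|<C$ and $T_C=0$ $\gamma_z$-a.s.\ otherwise) ensures that this closed martingale is uniformly integrable.

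First, I would decompose $T_C$ over the generation-$(n+1)$ cells. For $u\in\Ub$ with $|u|=n+1$, let $I_u\subset[0,R(u)]$ be the time interval of the sub-excursion attached to $u$, and set
\[
T_C^{(u)} := \int_{I_u} \mathds{1}_{\{\forall\, 0\le b\le y(t),\ |\Delta e_b^{(t)}|<C\}}\,\mathrm{d}t.
\]
I would then justify $T_C=\sum_{|u|=n+1}T_C^{(u)}$ almost surely. A time $t$ outside every such $I_u$ must stay in a cell of generation $\le n$ at every level $b\in[0,y(t)]$, and therefore $t$ must be a left- or right-endpoint of the corresponding ancestor excursion at the level $y(t)$; such points form the union over cells $v$ at generations $\le n$ of ``spine sets'' $S_v$, each Lebesgue-null because it is contained in the set of times at which the height process $Y$ reaches a one-sided extremum at its own level, a standard null set for Brownian excursions.

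Second, on $\{t\in I_u\}$ I would split the constraint in the indicator between the levels $b\in[0,b_u]$ and the levels $b\in[b_u,y(t)]$. The excursions $e_b^{(t)}$ with $b\le b_u$ depend only on the cell ancestry of $u$ (not on the particular $t\in I_u$), and the requirement that all of them have size strictly less than $C$ is by construction the event $\{u\in\Ub^{(C)}\}$, which is $\Gscr_n$-measurable. The remaining constraint, on $b\in[b_u,y(t)]$, is (after recentering time and height at the birth of $u$) the intrinsic $T_C$ functional applied to the $u$-sub-excursion treated as an excursion in its own right; denote it $\widetilde T_C^{(u)}$. Hence $T_C^{(u)}=\mathds{1}_{\{u\in\Ub^{(C)}\}}\,\widetilde T_C^{(u)}$. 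Iterating the branching property of Theorem \ref{idp} across generations, conditionally on $\Gscr_n$ the sub-excursions indexed by $|u|=n+1$ are independent with the $u$-th one distributed as $\gamma_{\Xcal_u(0)}$. Lemma \ref{lem:time spent C} therefore gives
\[
\gamma_z\bigl(\widetilde T_C^{(u)}\,\big|\,\Gscr_n\bigr) = \gamma_{\Xcal_u(0)}(T_C) = \pi\,\Xcal_u(0)^2\,\Rcal_C\!\bigl(\Xcal_u(0)/2\bigr)\,\mathds{1}_{\{|\Xcal_u(0)|<C\}}.
\]
Since $\{u\in\Ub^{(C)}\}\subset\{|\Xcal_u(0)|<C\}$, pulling out the $\Gscr_n$-measurable indicator and summing over $|u|=n+1$ produces exactly $\Dcal^{(C)}(n)$, proving the identity; the uniformly integrable martingale property is then automatic.

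The step I expect to be the main obstacle is the decomposition $T_C=\sum_{|u|=n+1}T_C^{(u)}$ in the first step, specifically the argument that the spine sets $S_v$ are Lebesgue negligible. Everything downstream is a bookkeeping exercise combining the branching structure (Theorem \ref{idp}) with the explicit computation of $\gamma_w(T_C)$ from Lemma \ref{lem:time spent C}.
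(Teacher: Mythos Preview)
Your approach is essentially the paper's: decompose $T_C$ over the descendant excursions, invoke the branching property of Theorem~\ref{idp} to condition on $\Gscr_n$, and apply Lemma~\ref{lem:time spent C} to each piece. The paper does this only for $n=0$ (conditioning on the Eve cell $\Xi$) and then appeals to induction, whereas you carry out the general step directly; the content is the same. You are in fact slightly more careful than the paper on two points: you single out the indicator $\mathds{1}_{\{u\in\Ub^{(C)}\}}$ coming from the constraint on levels $b\le b_u$ (the paper's displayed identity for $n=0$ suppresses the indicator that the Eve cell stays below $C$ up to the birth time), and you flag the null-set issue in the decomposition $T_C=\sum_{|u|=n+1}T_C^{(u)}$, which the paper simply absorbs into the phrase ``splitting the integral over the children of $\Xi$''. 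Your justification of that null set is a little loose---the spine of a cell $v$ is most cleanly seen as the range of the two monotone maps $b\mapsto i_\pm(b)$ giving the endpoints of the locally largest sub-excursion at level $b$, hence Lebesgue null---but the conclusion is correct and this is not a genuine obstacle.
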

\begin{proof}
For simplicity, we prove this for $n=0$: the identity then follows by induction from the branching property. Splitting the integral over the children of $\Xi$ and using Theorem \ref{idp}, we have for $|z|<C$,
\[
\gamma_z\left(\int_0^{R(u)}\mathds{1}_{\{\forall 0\le b\le y(t), \, |\Delta e_b^{(t)}| \le C\}} \mathrm{d}t \, \Big| \, \Gscr_1\right) 
=
\sum_{i\ge 1} \gamma_{\Delta e_i}\left(\int_0^{R(u)}\mathds{1}_{\{\forall 0\le b\le y(t), \, |\Delta e_b^{(t)}| \le C\}} \mathrm{d}t\right), 
\]
where the $e_i, i\ge 1$ denote the excursions created by the jumps of $\Xi$. Now, using Lemma \ref{lem:time spent C}, we immediately get
\[
\gamma_z\left(\int_0^{R(u)}\mathds{1}_{\{\forall 0\le b\le y(t), \, |\Delta e_b^{(t)}| \le C\}} \mathrm{d}t \, \Big| \, \Gscr_1 \right) 
=
\sum_{i\ge 1} \Rcal_C(\Delta e_i/2) \pi |\Delta e_i|^2\mathds{1}_{\{|\Delta e_i|< C\}},
\]
which is the desired equality.
\end{proof}

\bigskip

\begin{Thm}
Under $\gamma_z$, the derivative martingale $(\Dcal(n), n \ge 0)$ converges almost surely towards twice the duration $R(u)$ of the Brownian excursion.
\end{Thm}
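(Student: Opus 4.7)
The plan is to use the uniformly integrable martingale $\Dcal^{(C)}(n)$ from Corollary \ref{cor:truncated derivative}, whose almost sure limit is $T_C$, and then let $C\to\infty$ using $T_C\nearrow R(u)$. An elementary computation from the formula in Lemma \ref{lem:time spent C} gives, for $|z|<C$,
\[
\pi\Rcal_C(z/2) = \tfrac{1}{2}\ln C - \tfrac{1}{2}\ln|z| + \tfrac{1}{2}\ln\bigl(1+\sqrt{1-z^2/C^2}\bigr),
\]
with the last term bounded in $[0,\tfrac{1}{2}\ln 2]$.

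Summing this over cells in $\Ub^{(C)}$ of generation $n+1$ yields the decomposition
\[
2\Dcal^{(C)}(n) = (\ln C)\,\Mcal^{(C)}(n) + \Dcal^{\Ub^{(C)}}(n) + R^{(C)}(n),
\]
where $\Mcal^{(C)}(n) = \sum_{u\in\Ub^{(C)},\,|u|=n+1}|\Xcal_u(0)|^2$, $\Dcal^{\Ub^{(C)}}(n) = -\sum_{u\in\Ub^{(C)},\,|u|=n+1}\ln|\Xcal_u(0)|\cdot|\Xcal_u(0)|^2$ is the derivative martingale restricted to the truncated tree, and $0\le R^{(C)}(n)\le(\ln 2)\Mcal^{(C)}(n)$. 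Letting $n\to\infty$ at fixed $C$: Corollary \ref{cor:truncated derivative} gives $\Dcal^{(C)}(n)\to T_C$ a.s. Since $\Mcal^{(C)}(n)\le\Mcal(n)$ and the critical additive martingale $\Mcal(n)$ converges a.s.\ to $0$ (standard fact for critical branching random walks, consistent with $\kappa(\tfrac{5}{2})=\kappa'(\tfrac{5}{2})=0$), both $\Mcal^{(C)}(n)$ and $R^{(C)}(n)$ tend to $0$ a.s., and therefore $\Dcal^{\Ub^{(C)}}(n)\to 2T_C$ a.s. Moreover, $T_C\nearrow R(u)$ a.s.\ as $C\to\infty$: every sub-excursion of $u$ has a.s.\ finite size, so the Lebesgue measure of the set of times $t$ for which some $|\Delta e_b^{(t)}|\ge C$ shrinks to $0$ by monotone convergence.

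The remaining task is to control $\Dcal(n)-\Dcal^{\Ub^{(C)}}(n)=-\sum_{u\notin\Ub^{(C)},\,|u|=n+1}\ln|\Xcal_u(0)|\cdot|\Xcal_u(0)|^2$. By the branching property, this decomposes along the stopping line $\tau^{(C)}$ of cells first reaching absolute size $\ge C$ along their branch, as $\sum_{v\in\tau^{(C)}}\Dcal^{(v)}(n-|v|)$, where $\Dcal^{(v)}$ denotes the derivative martingale associated to the sub-tree rooted at $v$. By self-similarity, each such sub-tree is distributed as the growth-fragmentation of an independent Brownian excursion under $\gamma_{\Xcal_v(0)}$, so the previous analysis applied to each sub-tree at a refined truncation level, together with the fact that $\sum_{v\in\tau^{(C)}}R_v=R(u)-T_C$ (where $R_v$ is the duration of the corresponding sub-excursion), should give $\sum_v\Dcal^{(v)}(n-|v|)\to 2(R(u)-T_C)$ a.s., and hence $\Dcal(n)\to 2R(u)$.

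The main obstacle is precisely this last identification, which has a self-referential flavour: showing that a sub-tree derivative martingale converges to twice the corresponding sub-duration is the statement of the theorem itself, applied to the sub-excursion with initial size $\Xcal_v(0)$. It can be handled either by proving the statement simultaneously for all starting sizes $z$ via a squeezing argument along an increasing sequence of truncations $C_k\nearrow\infty$, exploiting the monotonicity $T_C\nearrow R(u)$ together with a dominated/monotone convergence argument to exchange the $C\to\infty$ and $n\to\infty$ limits, or alternatively by first invoking the general a.s.\ convergence of critical derivative martingales (in the spirit of Biggins--Kyprianou or A\"{i}d\'ekon) to guarantee that $\Dcal(n)$ has some a.s.\ limit $\Dcal_\infty$, and then identifying $\Dcal_\infty=2R(u)$ via a squeezing from the truncated approximations.
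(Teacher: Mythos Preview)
Your decomposition of $\Dcal^{(C)}(n)$ and the asymptotic of $\Rcal_C$ are exactly the right ingredients, but you are taking a detour that creates the self-referential difficulty you describe. The paper's proof avoids it by a single observation you overlooked: since $u$ is continuous on the compact interval $[0,R(u)]$, all the sizes $|\Delta e_b^{(t)}|$ are uniformly bounded a.s., so the events $E_C:=\{\text{all excursion sizes are} < C\}$ increase to the full space as $C\to\infty$. On $E_C$, no cell is ever killed, i.e.\ $\Ub^{(C)}=\Ub$, and therefore $\Dcal^{\Ub^{(C)}}(n)=\Dcal(n)$ and $T_C=R(u)$. Your own decomposition then reads $2\Dcal^{(C)}(n)=\Dcal(n)+(\ln C)\Mcal(n)+R^{(C)}(n)$ with the last two terms going to $0$, so $\Dcal(n)\to 2T_C=2R(u)$ on $E_C$ directly from Corollary~\ref{cor:truncated derivative}. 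The ``remaining task'' of controlling $\Dcal(n)-\Dcal^{\Ub^{(C)}}(n)$ simply vanishes.

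In other words, there is no need to let $C\to\infty$ \emph{after} $n\to\infty$, nor to invoke general convergence theorems for critical derivative martingales: fix $C$, argue on $E_C$, and cover the full space by varying $C$. Your proposed squeezing argument along $C_k\nearrow\infty$ would require exchanging limits that you have not justified, and the branching decomposition along the stopping line $\tau^{(C)}$ is genuinely circular as you noted.
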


\begin{proof}
The proof is standard in the branching random walk literature, see \cite{BiKy} or \cite{ZShi}. It suffices to prove it on the event where all excursions have size smaller than $C$, this for any $C > 0$. Let then $C > 0$ and suppose the corresponding event holds. Using that $\Rcal_C(z) = -{1\over 2\pi} \ln(z) + O_z(1)$ when $z\to 0$ and that the martingale $\Mcal(n)$ converges to $0$,  we get that 
$$
\Dcal^{(C)}(n) \underset{n\to\infty}{\sim} 
\frac12 \Dcal(n). 
$$

\noindent Since on our event, $T_C=R(u)$, L\'evy's martingale convergence theorem together with Corollary \ref{cor:truncated derivative} imply the result. 
\end{proof}





\end{document}